\documentclass[a4paper,10pt]{article}
\usepackage{times, url}
\textheight 27.2cm
\textwidth 18.5cm
\oddsidemargin -0.55in
\evensidemargin -0.55in
\topmargin -3cm

\usepackage{amsfonts}

\usepackage{amsmath,amssymb,amsthm,amsfonts}
\usepackage{graphicx}
\usepackage{color}
\usepackage{mathrsfs}
\usepackage[utf8]{inputenc}
\usepackage[nottoc,numbib]{tocbibind}

\usepackage{amsfonts}

\usepackage{amsmath}
\usepackage{amstext}
\usepackage{amsopn}
\usepackage{amsbsy}
\usepackage{amscd}
\usepackage{amsxtra}
\usepackage{esint}
\usepackage{amsthm}

\usepackage{epsfig}
\usepackage{pstricks,multido,pst-node}

\usepackage[misc]{ifsym}

\usepackage{bibspacing}
\setlength{\bibitemsep}{.05\baselineskip plus .05\baselineskip minus .05\baselineskip}

\newtheorem{theorem}{Theorem}[section]
\newtheorem{lemma}[theorem]{Lemma}
\newtheorem{remark}{Remark}
\newtheorem{corollary}[theorem]{Corollary}
\newtheorem{proposition}[theorem]{Proposition}
\newtheorem{example}{ Example}
\newtheorem{definition}{Definition}

\theoremstyle{theorem}

\newtheorem*{notat}{Notations}

\newcommand{\ds}{\displaystyle}

\newcommand{\eps}{\epsilon}
\newcommand{\sech}{\mathrm{sech}}
\newcommand{\llang}{\left\lgroup}
\newcommand{\rrang}{\right\rgroup_{2}}

\newcommand{\langg}{\left\lgroup}
\newcommand{\rangg}{\right\rgroup_{2}}
\newcommand{\ranggg}{\right\rgroup_{1}}
\newcommand{\dxx}{\,\mathrm{d}{x}}
\newcommand{\dtt}{\,\mathrm{d}{t}}
\newcommand{\drr}{\,\mathrm{d}{r}}
\newcommand{\dsx}{\,\mathrm{d}\sigma_{x}}

\numberwithin{equation}{section}

    \newcommand\email[1]{\_email #1\q_nil}
    \def\_email#1@#2\q_nil{%
      \href{mailto:#1@#2}{{\emailfont #1\emailampersat #2}}
    }
    \newcommand\emailfont{\sffamily}
    \newcommand\emailampersat{{\color{cyan}\small@}}
		
		\usepackage{hyperref}
\hypersetup{
    colorlinks=true,
    linkcolor=blue,          
    citecolor=blue,        
    filecolor=magenta,      
    urlcolor=cyan           
}
 
\urlstyle{same} 

\allowdisplaybreaks
\begin{document}
\setcounter{page}{1} 
\noindent{\LARGE\pmb{Domain-size effects on boundary layers of a nonlocal sinh-Gordon }\vspace{3pt}\\   \pmb{equation}}
\vspace{3mm}\\
{\large Chiun-Chang Lee\footnote{Institute for Computational and Modeling Science,  National Tsing Hua University,  Hsinchu 30013, Taiwan.\\
\Letter\hspace*{2.5mm}\email{chlee@mail.nd.nthu.edu.tw}}}





\vspace{2mm}

\noindent
\begin{abstract}
This work investigates a nonlocal sinh-Gordon equation with a singularly perturbed parameter in a ball. Under the Robin boundary condition, the solution asymptotically forms a quite steep boundary layer in a thin annular region, and rapidly becomes a flat curve outside this region. {Focusing more particularly on the structure of the thin annular layer in this region, the pointwise asymptotic expansion involving the domain-size is evaluated more sharply, where the domain-size exactly appears in the second term of the asymptotic expansion.} It should be stressed that the standard argument of matching asymptotic expansions is limited because the model has a nonlocal coefficient depending on the unknown~solution. A new approach relies on integrating ideas based on a Dirichlet-to-Neumann map in an asymptotic framework. The rigorous asymptotic expansions for the thin layer structure also matches well with the numerical results. Furthermore, various boundary concentration phenomena of the thin annular layer are precisely demonstrated.
\end{abstract}




{\bf\footnotesize Keywords.} {\footnotesize Nonlocal, thin annular layer, Dirichlet-to-Neumann approach, {domain-size}, boundary concentration phenomenon.}

{\bf\footnotesize Mathematics Subject Classification.} {\footnotesize 35R09, 35B25, 35C20.}

\noindent

\section{The model and an overview}\label{sec-intro}
\noindent

Several important issues arising in plasma physics, electrochemistry and other topics lead to consider \textit{nonlocal} models with singularly perturbed parameters; see, e.g., \cite{hl2015-2,h2019-2,ks2015,l2014,l2016,l2019,lhl2016} and references therein. Focusing particularly on the electrochemical phenomena near the charged particle immersed in symmetrical electrolytes~\cite{hr2015,hy2020,s2012,w2014} as well as on related applications in colloidal systems~\cite{bm2018,hl2015-1,h2019-1,m2002,mhk2001}, we are interested in a nonlocal semi-linear equation
\begin{align}\label{eq1}
\eps^2\Delta{U}=\left(\fint_{\Omega}\cosh{U}\,\dxx\right)^{-1}{\sinh{U}}\quad\mathrm{in}\,\,\Omega,
\end{align}
and focus on a homogeneous Robin boundary condition 
\begin{align}\label{bd1}
U+\gamma\eps\partial_{\vec{n}}U=a\quad\mathrm{on}\,\,\partial\Omega.
\end{align}
Here $0<\eps\ll1$ is a singular perturbation parameter scaled by length
(see the related physical background below), $\Omega$ is a bounded smooth domain in $\mathbb{R}^N$ ($N>1$), $\Delta$ stands for the Laplace operator in $\mathbb{R}^N$,
 $\partial_{\vec{n}}:=\vec{n}\cdot\nabla$
and $\vec{n}:=\vec{n}(x)$ is the outward unit normal vector at $x\in\partial\Omega$ and
\begin{equation}\notag
 \fint_{\Omega}:=|\Omega|^{-1}\int_{\Omega}
\end{equation} 
with $|\Omega|$ the standard Lebesgue measure of $\Omega$. Besides, 
  $\gamma>0$ is a constant independent of $\eps$, and
$a:=a(x)\not\equiv0$ defined on $\partial\Omega$ is a smooth function independent of $\eps$.
It should be stressed that the nonlocal coefficient $\left(\fint_{\Omega}\cosh{U}\,\dxx\right)^{-1}$  
is a dimensionless variable because $\int_{\Omega}\cosh{U}\,\dxx$ 
has the same physical dimension as the volume.
Such a concept of dimensionless formulation plays a crucial role in 
connecting between the dimensionless model and the realistic physical phenomena; see, e.g., \cite{w2014}.

	\begin{figure}[htp]
\centering{%
\begin{tabular}{@{\hspace{-0pc}}c@{\hspace{-0pc}}c}
 \psfig{figure=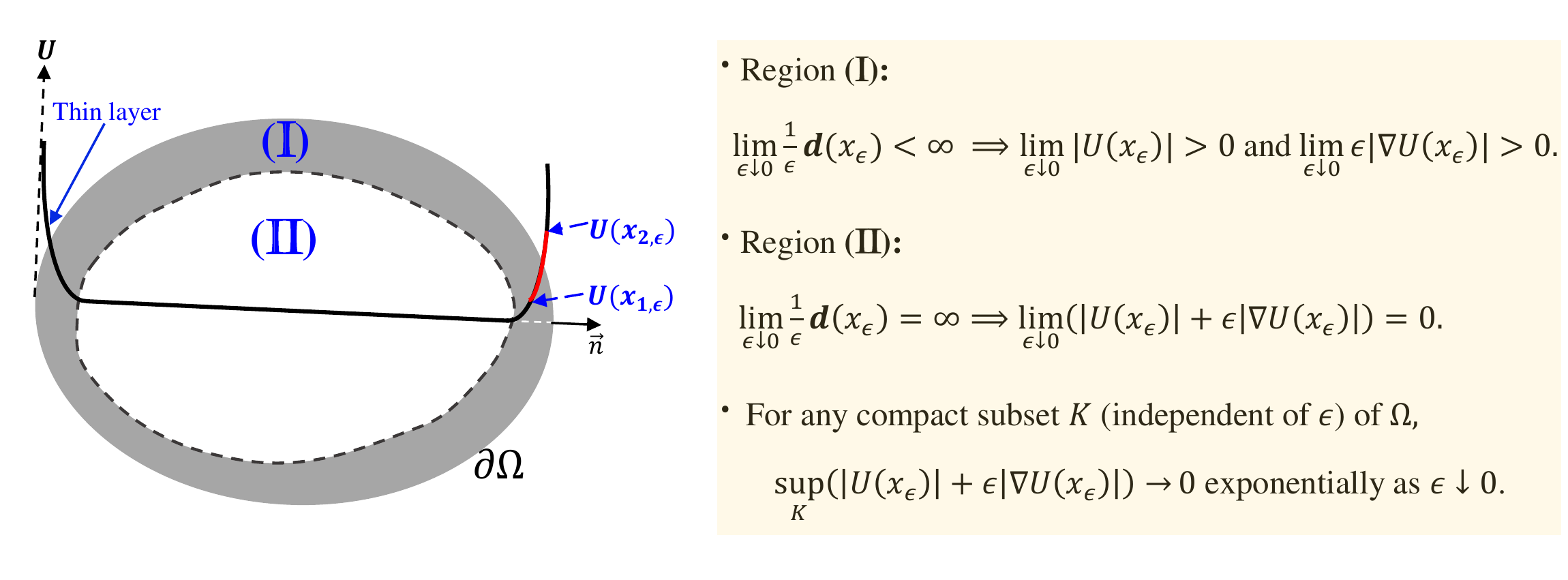, width=16cm}
  \end{tabular}
	}
	\caption{\small\em As $0<\eps\ll1$, $U$ develops a thin and quite steep layer near the boundary $\partial\Omega$;
	see Theorems~\ref{bdylayer-exist-thm} and \ref{new-cor}
in Section~\ref{sec-example} for the details of the layer structure.\label{fig-ccpb}}
\end{figure}

Equation~(\ref{eq1}) has various applications in the field of physics. 
When the nonlocal coefficient~$\left(\fint_{\Omega}\cosh{U}\,\dxx\right)^{-1}$ is withdrawn, (\ref{eq1}) becomes the standard
elliptic sinh--Gordon equation
 describing a system of interacting charged particles for the thermal equilibrium of plasma at very high temperature 
(corresponding to the parameter $\eps^{-2}$); see, e.g.,  \cite{jk1990}
and references therein. In such a situation, the physical background 
is usually set up in two dimensional domain~$\Omega\subset\mathbb{R}^2$.
Alternatively,
(\ref{eq1}) can be viewed as a 
sinh--Poisson equation endowed with 
 a ``minus sign" on its Laplace operator.
To distinguish between these models, in this case
we shall call (\ref{eq1}) a nonlocal sinh--Poisson type equation having a ``positive sign".
On the other hand, on a formal level of a ``stochastic" concept proposed in \cite{n2004}, 
(\ref{eq1}) can be rewritten as
independent identically distributed random variables with 
a Borel probability measure~$\mathcal{P}=\frac{1}{2}(\delta_{-1}+\delta_{+1})$ defined on~$[-1,1]$;
that is,
\begin{align*}
\eps^2\Delta{U}=2\left(\fint_{\Omega}\int_{[-1,1]}e^{{\mu}U}\mathcal{P}(\mathrm{d}\mu)\,\dxx\right)^{-1}\int_{[-1,1]}{\mu{e}^{\mu{U}}}
\mathcal{P}(\mathrm{d}\mu)\quad\mathrm{in}\,\,\Omega,
\end{align*}  
where $\delta_{-1}$ and $\delta_{+1}$
are Dirac delta functions concentrated at $-1$ and $+1$, respectively.
We further refer the reader to
 \cite{cd2001,GMN,MTW} and Section 3 of \cite{FLZ}
for related theories and applications of this model.

Besides its traditional applications, 
recently this model has been used to simulate the ion transport
and describe the structure and behavior of the thin electrical double layer (EDL) near the charged surface,
particularly for that of spherical colloidal particle in a symmetrical electrolyte solution (cf. \cite{s2012,w2014}); see Section~\ref{sec-1} for the specific detail. We will prove that the solution~$U$ with $0<\eps\ll1$ (corresponding to the electrostatic potential)
 is uniformly bounded to $\eps$ and exhibits boundary layers (corresponding to the EDL) with thickness
of the order $\eps$ near the boundary.

\textcolor{red}{A motivation of this study was raised by some boundary layer phenomena with the effects of the domain-size (cf. \cite{eg2019,kas2015,wm2011}) and the shape/geometry of the charged surface (cf. \cite{fq2011,m2002,mhk2001}), which have been observed numerically, but yet lack of understanding in the framework of rigorous mathematical analysis. It should be stressed that the effect of the domain-size on thin boundary layers might not be described clearly in general high-dimensional domains, so they particularly restricted themselves to one-dimensional models. As practical applications of electrochemistry, on the other hand, investigating the curvature effect on boundary layer structure usually focuses the physical domain on cylindrical, annular and spherical cases; see, e.g., \cite{fq2011} and references therein. Hence, based on these related investigations, one has a strong motivation to study (\ref{eq1})--(\ref{bd1}) with small~$\eps$ (corresponding to a small scaled Debye length), and $\Omega$ is set as a ball with the simplest geometry.}

\textcolor{red}{Note also that the sphere $\partial\Omega$ has constant mean curvature. Thus the effect of the boundary geometry on boundary layers in a spherical domain is just a special case, and the related result is not sufficiently understood when the spherical domain is replaced with a general bounded domain. Consequently, in such a situation we may focus our analysis on boundary layers with the domain-size effect, which is better to give expression to both the physical connection and the mathematical application; see Section~\ref{sec-setpro} for the setup. For the sake of convenience of the analysis and describing our results, in what follows we will use the radius of $\Omega$ to describe the effect of the domain-size (the diameter of $\Omega$) on the asymptotics of thin boundary~layers.}

Recently, there is a vast literature concerning standard elliptic sinh--Gordon type equations and sinh--Poisson type equations. However, for nonlocal model~(\ref{eq1})--(\ref{bd1})
with $0<\eps\ll1$,
to the best of our knowledge the related concentration phenomena and
the \textcolor{red}{domain-size effect} on the rigorous asymptotics of solutions remain unclear. \textcolor{red}{Since we focus mainly on $\Omega$ a ball,} our main interest will rely on the thin layer structure of solutions with the effect of \textcolor{red}{the radius of the spherical domain~$\Omega$}
and establish various boundary concentration phenomena. 
The main results are stated in Section~\ref{sec-example} and
their proofs are put in Sections~\ref{sec-curvature} and \ref{sec-concentration}.

	Before discussing the details of specific studies,
let us sketch the basic property of such thin layers
and point out the importance of analyzing its pointwise asymptotics~(cf. Figure~\ref{fig-ccpb}).
 Let $x_{1,\eps}$ and $x_{2,\eps}$ 
be two points located in this thin layer region
and lying on the same direction of the outward normal to the boundary. So we have 
$\ds\limsup_{\eps\downarrow0}{\eps}^{-1}\boldsymbol{{d}}{(x_{i,\eps})}<\infty$, $i=1,2$,
where $\boldsymbol{d}(\cdot):=\mathrm{dist}(\cdot,\partial\Omega)$ is the distance function to the boundary~$\partial\Omega$. Note also that each
$x_{i,\eps}$ approaches the boundary points
as $\eps$ goes to zero.
However, when $\ds\lim_{\eps\downarrow0}{\eps}^{-1}{\boldsymbol{d}(x_{1,\eps})}\neq\lim_{\eps\downarrow0}{\eps}^{-1}{\boldsymbol{d}(x_{2,\eps})}$, the height difference~$|U(x_{1,\eps})-U(x_{2,\eps})|$ of the thin layer profile at two points $x_{1,\eps}$ and $x_{2,\eps}$ does not tend to zero,
and the difference between the slopes of 
the thin layer profile at these two points (in the direction of the outward normal to the same boundary point) 
will tend to infinity.
Such a structure occurs in this quite thin region and is usually called the boundary layer.
Outside this thin region, the whole profile exponentially decay to zero as $\eps$ approaches zero.
Namely, the solution changes dramatically in this thin region, but merely makes a slight change outside this region.
Without the pointwise asymptotic analysis at $x_{\eps}$ where $\ds\limsup_{\eps\downarrow0}{\eps}^{-1}{\boldsymbol{d}(x_{\eps})}<\infty$, we merely obtain a ``one-point-jumping behavior" for the limiting profile of solutions at boundary points, and any information of the thin layer is hidden in the description.

Accordingly, we are devoted to pointwise asymptotics of solutions
in order to better understand the structure of the whole thin layer. 
We develop a singular perturbation analysis for radially symmetric solutions (in the case that the domain $\Omega$ is a ball
and $a(x)$ is a nonzero constant-valued function) and, more importantly, describe the effect of the \textcolor{red}{domain-size} on the thin layers precisely.
A series of basic estimates will be introduced in Sections~\ref{sec-dtn} and \ref{sec-curvature}.
The main concept is to establish
 a \textbf{Dirichlet-to-Neumann type map in an asymptotic framework} (cf. Theorem~\ref{DtoN-map}).
This rigorously derives the expansion formulas 
with \textbf{accurate first-two-term expansions} (with respect to $\eps$) for the layered solution
at each position which is sufficiently close to the boundary (in the sense that the distance between the point and the boundary
has at most the order $\eps$). Furthermore, we show
 in Proposition~\ref{prop-ceps} (see also, Lemma~\ref{dirac-lem}) that
the second order term (the small perturbation term) of the asymptotic expansions of the nonlocal coefficient plays a key role in the structure of the thin layer 
because it involves the \textcolor{red}{domain-size}. 
As will be clarified in Lemma~\ref{asy-radial-lem} and Theorem~\ref{new-cor},
the effect of the \textcolor{red}{domain-size} is significant in a thin region attaching to the boundary,
but is quite slight outside this thin region.

It should be stressed that the application of the
 Dirichlet-to-Neumann map to singularly perturbed nonlocal elliptic model is novel and different from the method of matching asymptotic expansions~(see, e.g., \cite{ar1981,f1973,h1979,o1968}).
 To the best of our knowledge, 
the traditional approach of matching asymptotic expansions is actually not easy to deal with such a nonlocal model 
before we obtain the accurate asymptoics of its nonlocal coefficients.
Accordingly, this new approach  has some advantages in dealing with such a singularly perturbed nonlocal model. 
We highlight them in turn here primarily for the reader to get a clear picture on this work.
\begin{itemize}
\item\,\,We first refer the reader to \cite{s2004},
showing that for some semilinear elliptic equations with Dirichlet boundary conditions in a bounded smooth domain,
 the mean curvature of domain boundary exactly appears in the second term of asymptotic expansions of their layers.
As a motivation, 
we consider the following nonlocal models
which are generalized from (\ref{eq1}) and study the structure of thin layers:
\begin{align}\label{0405-18eqn}
\eps^2\Delta{u}_i=\pmb{\pmb{\mathtt{C}}}_{u_i}^{\eps}f(u_i)\,\,\mathrm{in}\,\,\Omega
\end{align}
with the same boundary condition as (\ref{bd1}), $i=1,2$,
where $\pmb{\pmb{\mathtt{C}}}_{u_i}^{\eps}$ is a constant depending on unknown solution~$u_i$.
By \cite{s2004,t2001}, we assert that even if $\pmb{\pmb{\mathtt{C}}}_{u_1}^{\eps}-\pmb{\mathtt{C}}_{u_2}^{\eps}\to0$,
the different second order terms (tending to zero as $\eps\downarrow0$) of $\pmb{\mathtt{C}}_{u_1}^{\eps}$ and $\pmb{\mathtt{C}}_{u_2}^{\eps}$
results in different structures of their layers near the boundary.
However, as $\eps^2$ is sufficiently small, the numerical solutions are not easy to show the difference. 
Hence, for (\ref{eq1})--(\ref{bd1}) with small $\eps^2$,
 investigating the precise first two term of the nonlocal coefficient with respect to $\eps$
and establishing the pointwise asymptotics is usually of a challenge and particularly interesting.

\item\,\,In this work, we focus on the case of $\Omega=B_R:=\{x\in\mathbb{R}^N:|x|<R\}$ a ball
with the simplest geometry
and $a(x)\equiv{a}_0\neq0$ a constant-valued function (cf. Section~\ref{sec-setpro}). 
Then the uniqueness of (\ref{eq1})--(\ref{bd1}) (see Proposition~\ref{corapp} in the Appendix)
implies that $U$ is radially symmetric in $B_R$.
We develop a rigorous asymptotic analysis based on a Dirichlet-to-Neumann map 
in the asymptotic framework with $0<\eps\ll1$ (cf. (\ref{amen}) and Theorem~\ref{DtoN-map}).
Using such an approach, we establish precise first two terms of the nonlocal coefficient of (\ref{eq1}). 
In particular, the second term exactly involves \textcolor{red}{the curvature  of $\partial\Omega$ given by $R^{-1}$} (cf. Proposition~\ref{prop-ceps}). 
Furthermore, we derive an ODE of $U$ in an asymptotic framework
 involving the \textcolor{red}{diameter $2R$ of $\Omega=B_R$} (cf. Lemma~\ref{lem-1119-9011}).
We show that as $\eps\downarrow0$, $U$ develops quite steep boundary layers in a thin region with thickness of the order $\eps$ attaching to the boundary~$\partial{\Omega}$ (cf. Figure~\ref{fig-ccpb} and Theorem~\ref{bdylayer-exist-thm}). 
We completely study the structure of the thin layer through establishing refined pointwise asymptotics of $U$ in this thin region
(cf. Theorem~\ref{new-cor} and the proof in Section~\ref{sec-mainthm-2}). An interesting outcome shows that
 the second order term of the asymptotics of $U(x_{\eps})$
 is algebraically dependent on the first two order terms (with respect to $\eps$) of
$\boldsymbol{{d}}{(x_{\eps})}$, which are presented in (\ref{0308-afr}) and Theorem~\ref{new-cor}.

\item\,\,Under the same boundary condition, a comparison between asymptotic solutions of the nonlocal sinh-Gordon equation
and the standard sinh-Gordon equation is completely studied.
Although these two solutions have the same leading order terms, their second order terms are totally different. The main difference comes from the second order term of the asymptotic expansion of the nonlocal coefficient of (\ref{eq1}) (see Section~\ref{sec-comparison}). The conclusion supports the above assertion. We also want to point out that the numerical solutions of these two models with $\eps=10^{-3}$ seem almost overlapping near the boundary (see Figure~\ref{chlee04072018} in Section~\ref{sec-comparison}). However, a closer look at pointwise asymptotics of solutions reveals that the slopes of their solution curves near the boundary always have $\mathcal{O}(1)$ difference
which does not tend to zero as $\eps$ goes to zero (see Remark~\ref{0403-rk-2018}).

\item\,\,Various boundary concentration phenomena for the thin boundary layer are established (cf. Theorem~\ref{mainthm-1} and the proof in Section~\ref{sec-concentration}).
\end{itemize}

We shall emphasize that although this work focuses mainly on nonlocal sinh-Gordon equations of radial cases, the analysis technique can be generalized to a class of nonlocal elliptic equations~(\ref{0405-18eqn})
with $\pmb{\mathtt{C}}_{u_i}^{\eps}=\left(\int_{\Omega}F(u_i)\dxx\right)^{l}$ for positive function $F$ and $l\neq0$, 
which is one of our ongoing projects.

\textcolor{red}{Although the progress has been made in this paper along with the radial case, there were still some interesting questions left open and we need to explain the difficulty for further pursues. Precisely speaking, for the general bounded smooth domain $\Omega$, it seems to be a challenge problem about the effect of the boundary geometry on thin boundary layers of equation \eqref{eq1} with homogeneous mixed boundary conditions like~\eqref{bd1}, for which there are two difficulties so that we still do not have any satisfactory result. One difficulty lies in the fact that under the homogeneous mixed boundary condition, the solution is in general not a constant on the boundary unless it is an overdetermined problem. (In this situation, the domain $\Omega$ is unknown to be determined.) Hence, the standard blow-up argument for boundary-layer problems with the homogeneous Dirichlet boundary condition~(see, e.g., \cite{s2004}) cannot be rigorously applied to this case. On the other hand, the nonlocal coefficient like that in \eqref{eq1} depends on the unknown $u$ and it is expected that the refined asymptotics of the nonlocality with respect to $0<\eps\ll1$ may affect the second-order term of the asymptotic expansion of the thin boundary layer, which can be observed in the radial case; see, e.g., Proposition~\ref{prop-ceps}. However, to the best of our knowledge, such a nonlocal effect on thin layers in general domains seems to not be found in the related literature. For the problem mentioned the above, we will keep working on it in a forthcoming project.}

\textcolor{red}{The rest of the work goes as follows. Based on the background and the motivation mentioned in this section, in the next section we formally formulate the problem with notations and definitions and collect Proposition~\ref{prop-ceps},  Theorems~\ref{bdylayer-exist-thm}--\ref{new-cor} (about domain-size effects on the thin layers with pointwise asymptotics) and Theorem~\ref{mainthm-1} (about the boundary concentration phenomenon of the thin layer) as the  main results. In Section~\ref{sec-dtn} we establish a Dirichlet-to-Neumann map in an asymptotic framework at boundary points, and complete the proof of  Proposition~\ref{prop-ceps}. These materials are crucial for investigating the refined asymptotics of thin boundary layer of the nonlocal model.  In~Section~\ref{sec-curvature} we state the proof of Theorems~\ref{bdylayer-exist-thm} and \ref{new-cor}. In particular, in~Section~\ref{sec-comparison} we compare the difference between asymptotics of solutions to nonlocal and standard elliptic sinh--Gordon equations. The proof of Theorem~\ref{mainthm-1} is stated in Section ~\ref{sec-concentration}. Finally, in the Appendix we prove the uniqueness of the solution of \eqref{eq1} with three type boundary~conditions.}



\section{Problem formulation and the results}\label{sec-1}
\noindent

 Let us start with an energy functional  
\begin{align}\label{energy0}
E_{\eps}[U]=\frac{\eps^2}{2}\int_{\Omega}|\nabla{U}|^2\dxx+|\Omega|\log\fint_{\Omega}\cosh{U}\,\dxx+\frac{\eps}{2\gamma}\int_{\partial\Omega}(U-a)^2\dsx,\,\,U\in{H}^1(\Omega).
\end{align}
The singular perturbation parameter~$\eps$ can be regarded as a length-scale parameter.
Thus, the standard dimension analysis immediately implies that the boundary term
 $\frac{\eps}{2\gamma}\int_{\partial\Omega}(U-a)^2\dsx$
scales in the same way as the gradient term~$\frac{\eps^2}{2}\int_{\Omega}|\nabla{U}|^2\dxx$
and the logarithm term~$|\Omega|\log\fint_{\Omega}\cosh{U}\,\dxx$.

(\ref{eq1})--(\ref{bd1}) results from applying variational calculus to functional $E_{\eps}$ over ${H}^1(\Omega)$,
where the nonlocal form
is obtained from the variation of the logarithm term of (\ref{energy0}).
Indeed, functional $E_{\eps}$
is strictly convex and admits a unique minimizer in $H^1(\Omega)$ (cf. Proposition~\ref{convexthm} in the Appendix).
Performing the variation of (\ref{energy0}) and applying the direct method yields that
 the unique minimizer~$U$ is a weak solution 
of (\ref{eq1}) with the Robin boundary condition~(\ref{bd1}). 
Furthermore,
note that $\Omega$ is a bounded smooth domain. Applying the standard elliptic regularity theory and the Sobolev's embedding argument~(see, e.g., \cite{GT2001})
concludes that the unique minimizer of the energy functional~(\ref{energy0}) is a classical solution of (\ref{eq1})--(\ref{bd1}).
For the sake of completeness, we prove the uniqueness of the classical solution to (\ref{eq1}) with the boundary condition~(\ref{bd1}) in Proposition~\ref{corapp}(i). On the other hand, we also prove the uniqueness for the classical solutions of
 the equation~(\ref{eq1}) with the Dirichlet and the Neumann boundary conditions which are stated in  
Proposition~\ref{corapp}(ii) and~(iii).

Equation~(\ref{eq1})--(\ref{bd1}) has important applications in electrochemistry, biology and physiology.
In the ion-conserving Poisson--Boltzmann theory for symmetrical electrolytes~\cite{s2012},
equation~(\ref{eq1}) has been derived under the assumption that \textit{the total density of all ion species are conserved}.
Here $U$ corresponds to the electrostatic potential, and the parameter $\eps$ is 
a scaled Debye screening length~\cite{lhl2011,lhl2016}. Physically, $\Omega$ usually represents the bulk in which all ion species occupy, where
$\frac{1}{2}\left(\fint_{\Omega}\cosh{U}\,\dxx\right)^{-1}e^{U}$ corresponds to the Boltzmann distribution of anion species with charge valence $-e_0$ ($e_0$ is the elementary charge), and $\frac{1}{2}\left(\fint_{\Omega}\cosh{U}\,\dxx\right)^{-1}e^{-U}$ corresponds to the Boltzmann distribution of cation species with charge valence $+e_0$.
The boundary~$\partial\Omega$ is regarded as a charged surface. 
Moreover, the electric field driving the ions toward the charged surface creates the EDL.
The Robin boundary condition~(\ref{bd1}) is derived from the capacitance effect of the EDL~\cite{k2007-1}, where
 $\gamma\eps$ is a scaled length with respect to the Stern layer, and $a:=a(x)$ is an extra potential applied on the charged surface~$\partial\Omega$.
In recent years, this model is used to simulate the behavior of the electrostatic potential in the EDL, and has many applications in colloidal systems. Hence, a boundary layer problem for the model~(\ref{eq1})--(\ref{bd1}) naturally arises in mathematics, 
and the rigorous analysis seems a challenge.
According to this motivation, we are interested in the boundary layer problem for model~(\ref{eq1})--(\ref{bd1}),
especially in the boundary concentration phenomena and the pointwise description of the thin layer structure.

It is worth stressing a similar model proposed in~\cite{hy2020,lhl2016,lr2018,rlw2006,w2014}, e.g.,
\begin{align}\label{eqccpb}
\eps^2\Delta{U}=\left(\fint_{\Omega}e^{U}\,\dxx\right)^{-1}{e^{U}}-\left(\fint_{\Omega}e^{-U}\,\dxx\right)^{-1}{e^{-U}}\quad\mathrm{in}\,\,\Omega.
\end{align}
This model is a steady-state Poisson--Nernst--Planck equation
for symmetric $1:1$ electrolytes, 
assuming that \textit{the density of each ion species is conserved} (cf. \cite{rl2007,w2014}).
Accordingly, the physical setting of model~(\ref{eq1}) is different from that of~(\ref{eqccpb}). On the other hand,
from a mathematical perspective one finds that (\ref{eq1}) does not satisfy the shift invariance and
 the integral of its right-hand side $\left(\fint_{\Omega}\cosh{U}\,\dxx\right)^{-1}{\sinh{U}}$ over $\Omega$ is not a constant value. Such a property is totally different from that of~(\ref{eqccpb}),
and may increase the difficulty on the analysis of solutions.
In the present work, new analysis technique is developed to
deal with the asymptotic behavior of solutions of (\ref{eq1})--(\ref{bd1}) with small $\eps>0$.

\subsection{The radial configuration and preliminary techniques}\label{sec-setpro}
\noindent

 For equation~(\ref{eq1})--(\ref{bd1}), the asymptotics of the nonlocal coefficient $\left(\fint_{\Omega}\cosh{U}\,\dxx\right)^{-1}$
may depend on the domain geometry. To see such effects in a simple way,
we focus mainly on the case that $\Omega$ is a ball with the simplest geometry, 
and establish fine asymptotic expansions with the effect of \textcolor{red}{the domain-size} on the thin layer as $\eps$ approaches zero. 
This setup describes a realistic electrolyte involving, for example, 
electrostatic interactions in spherical colloidal systems;
see the physical background in, e.g., \cite{m2002,mhk2001,w2014} and references therein.
Mathematically, such a setup allows us to study radially symmetric solutions where precise estimates are
more readily available. 

Hence, we may set $\Omega=B_R:=\{x\in\mathbb{R}^N:|x|<R\}$ for $R>0$, and $a(x)\equiv{a}_0$
on $\partial{B}_R$, where $a_0\in\mathbb{R}$ is a constant. In what follows we let the surface area of the unit sphere $|\partial{B}_1|=1$ for the convenience. Then the uniqueness for solutions of (\ref{eq1})--(\ref{bd1}) (cf. Proposition~\ref{corapp}) asserts that
 $U(x)=u(r)$ with $r=|x|$ is radially symmetric and
(\ref{eq1})--(\ref{bd1}) is equivalent ~to 
\begin{align}
\eps^2\left(u''(r)+\frac{N-1}{r}u'(r)\right)\,=\,\pmb{\mathtt{C}}(u)\sinh{u},&\quad{r}\in(0,R),\label{eq2}\\
\pmb{\mathtt{C}}(u)\,=\,\left(\frac{N}{R^N}\int_0^Rs^{N-1}\cosh{u}(s)\,\mathrm{d}s\right)^{-1},&\label{cons2}\\
u'(0)=0,\quad{u}(R)+{\gamma\eps}u'(R)={a}_0.&\label{bd2}
\end{align}
The solution $u$ may depend on the parameter $\eps$ and should be denoted as $u_{\eps}$ but we denote it as $u$ for
a sake of simplicity. Note also that coefficient $\pmb{\mathtt{C}}(u)$ depending on $u$ is unkown.

When $a_0=0$,  (\ref{eq2})--(\ref{bd2}) merely has a trivial solution due to the uniqueness. 
To avoid the trivial case, without loss of generality we may assume $a_0>0$. 
We are devoted to the pointwise asymptotics and various boundary concentration phenomena
of the solution $u$ as $0<\eps\ll1$.

In order to properly state the main results, we now introduce some notational conventions and definitions that will be used throughout the whole paper.
\begin{notat}~\\
$\bullet$~We abbreviate 
``~$\leq{C}$ "~ to `` $\lesssim$ ", where $C>0$ is a generic constant independent of parameter~$\eps$.\\
$\bullet$~$\mathcal{O}(1)$ is denoted by a bounded quantity independent of $\eps$.\\
$\bullet$~$o_{\eps}(1)$  is denoted by a small quantity tending towards zero as $\eps$ approaches zero.
\end{notat}

We can now make the following definitions.
\begin{definition}
Assume that $f_{\eps}$ has an expansion $f_{\eps}=\displaystyle\sum_{i\in\mathbb{N}}f_{(i)}\eps^{\sigma_i}$, where $f_{(i)}$ and $\sigma_i$
are real numbers independent of $\eps$ and $\sigma_i<\sigma_{i+1}$.
We define 
\begin{align}\label{mapping-t}
\langg{f}_{\eps}\ranggg:=f_{(1)}\eps^{\sigma_1}\quad\mathrm{and}\quad
\langg{f}_{\eps}\rangg:=f_{(1)}\eps^{\sigma_1}+f_{(2)}\eps^{\sigma_2} 
\end{align}
 which map $f_{\eps}$ to its leading term and first two terms, respectively.
\end{definition}

Next, to demonstrate the boundary concentration phenomena, we introduce a Dirac delta function~$\delta_R$ concentrated at the boundary point $r=R$ as follows.
\begin{definition}\label{def1}
It is said that 
\begin{equation*}
 f_\epsilon\rightharpoonup\mathcal{C}\delta_R\,\,\mathrm{weakly\,\,in}\,\,
\mathrm{C}([0,R];\mathbb{R})
\end{equation*}
with a weight $\mathcal{C}\neq0$ as $\epsilon\downarrow0$ if there holds
\begin{align}\notag
\lim_{\epsilon\downarrow0}\int_0^Rh(r)f_\epsilon(r)\drr=\mathcal{C}h(R)
\end{align}
for any continuous function $h:[0,R]\to\mathbb{R}$ independent of $\epsilon$.
\end{definition}


Since $\pmb{\mathtt{C}}(u)$ is positive and $\sinh{u}$ is strictly increasing to $u$,
applying the standard elliptic PDE comparison to (\ref{eq2})--(\ref{bd2}),
we obtain that $u$ and $u'$ exponentially decay to zero in the interior domain $(0,R)$ as $\eps\downarrow0$.
One key point for studying boundary asymptotics of $u$ is to transform (\ref{eq2}) into an integro-differential equation
\begin{align}\label{cr-in3}
\frac{\eps^2}{2}u'^2(t)+(N-1)\eps^2\int_{\frac{R}{2}}^t\frac{1}{r}u'^2(r)\drr=\pmb{\mathtt{C}}(u)\cosh{u}(t)+\mathtt{K}_{\eps},\,\,t\in[0,R),
\end{align}
where $\mathtt{K}_{\eps}$ is a constant depending on $\eps$. Obviously, 
using the boundary condition~(\ref{bd2}) and (\ref{cr-in3}), 
we can make appropriate manipulations to obtain $\pmb{\mathtt{C}}(u)\to1$ and $\mathtt{K}_{\eps}\to-1$  (as $\eps\downarrow0$)
and the exact leading-order terms of 
 boundary asymptotic expansions of $u(R)$ and $u'(R)$ (see, e.g., the argument in \cite{lhl2011,lhl2016}). However, the leading order terms cannot show
the effect of \textcolor{red}{the domain-size $2R$ (and also the boundary curvature $R^{-1}$)} on the 
solution structure.
To basically understand such an issue,
investigating their first two term asymptotic expansions with respect to $\eps$
is necessary.

There are two main difficulties requiring discussion. 
The first difficulty comes from a fact that $\pmb{\mathtt{C}}(u)$ depends on the unknown solution $u$.
Hence, as $\eps$ approaches zero, the asymptotics of $u$ and $\pmb{\mathtt{C}}(u)$
are influenced by each other. Such rigorous analysis will be clarified in Section~\ref{sec-dtn}.
Particularly, for (\ref{cr-in3}),
we show in Lemma~\ref{cruc-in-thm} that
 the leading order term of $(N-1)\eps^2\int^R_{R/2}\frac{1}{r}u'^2(r)\drr$ exactly determines the second order term (with respect to $\eps$) of $\pmb{\mathtt{C}}(u)$, $u(R)$ and $u'(R)$ as $0<\eps\ll1$.
Based on such an observation,
it suffices to establish the exact leading order term of $\int_{0}^Rg(r)\cdot\eps{u}'^2(r)\drr$ for any continuous function $g\in\mathrm{C}([0,R])$. An interesting outcome shows that $\eps{u}'^2$ 
behaves exactly as a \textbf{Dirac delta function} concentrated at boundary point $r=R$  (cf. Lemma \ref{dirac-lem}).

The other difficulty comes from the Robin boundary condition~(\ref{bd2}) at $r=R$.
As a technical idea for dealing with the asymptotics of the thin layer near the boundary $r=R$, we establish
a \textbf{Dirichlet-to-Neumann type map} (cf. Theorem~\ref{DtoN-map}), 
\begin{equation*}
\Lambda_{\eps}: u(R) \mapsto{u}'(R)
\end{equation*}
which maps $u(R)$ to $u'(R)$ in an asymptotic framework,
\begin{align}\label{amen}
\langg\Lambda_{\eps}(u(R))\rangg=&\frac{2}{\eps}\langg\sinh\frac{u(R)}{2}\rangg
-\underbrace{\frac{2}{R}\left(N\cosh^2\frac{\langg{u}(R)\ranggg}{2}-1\right)\tanh\frac{\langg{u}(R)\ranggg}{4}}_{\mathcal{O}(1)\,\,\mathrm{term\,\,involving\,\,the\,\,\textcolor{red}{domain\,\,size}\,\,effect}},
\end{align}
as $0<\eps\ll1$.
Moreover,
\begin{equation*}
\left|u'(R)-\langg\Lambda_{\eps}(u(R))\rangg\right|\lesssim\sqrt{\eps}.
\end{equation*}
We stress that $\langg\sinh\frac{u(R)}{2}\rangg$ involves the second order term of $u(R)$.
Combining (\ref{amen}) with the Robin boundary condition~(\ref{bd2}),
we can determine the exact first two order expansions of $\pmb{\mathtt{C}}(u)$, $u(R)$ and $u'(R)$
with respect to $\eps$, which are described as follows.

\begin{proposition}\label{prop-ceps}
For $\eps>0$, let $u$ be the unique classical solution of (\ref{eq2})--(\ref{bd2}), where $a_0$ and $\gamma$ are positive constants independent of $\eps$. Then 
as $0<\eps\ll1$, we have
\begin{align}
\langg\pmb{\mathtt{C}}(u)\rangg=&\,1-\frac{2N}{R}\left(\cosh\frac{b}{2}-1\right)\eps,\label{ceps-1}\\
\langg{u}(R)\rangg=&\,b+\frac{2}{R}\eps\cdot\frac{\gamma\left(N\cosh^2\frac{b}{2}-1\right)\tanh\frac{b}{4}}{\gamma\cosh\frac{b}{2}+1},\label{mainth1-id6}\\
\langg{u}'(R)\rangg=&\,\frac{2}{\eps}\sinh\frac{b}{2}-\frac{2}{R}\cdot\frac{\left(N\cosh^2\frac{b}{2}-1\right)\tanh\frac{b}{4}}{\gamma\cosh\frac{b}{2}+1}\label{mainth1-id5}
\end{align}
with an optimal error estimate
\begin{equation}\label{11-0305}
\left|\pmb{\mathtt{C}}(u)-\langg\pmb{\mathtt{C}}(u)\rangg\right|+\left|{u}(R)-\langg{u}(R)\rangg\right|+\eps\left|{u}'(R)-\langg{u}'(R)\rangg\right|\lesssim\eps^{3/2},
\end{equation}
where $b\in(0,a_0)$ uniquely solves
\begin{align}\label{arbequ}
b+2\gamma\sinh\frac{b}{2}=a_0.
\end{align}
\end{proposition}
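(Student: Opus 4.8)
The plan is to extract the three asymptotic expansions from the two structural ingredients already set up: the integro-differential identity \eqref{cr-in3} and the Dirichlet-to-Neumann map \eqref{amen}. First I would record the leading-order picture. Using \eqref{bd2}, \eqref{cr-in3} and the comparison principle for \eqref{eq2}--\eqref{bd2}, one knows $\pmb{\mathtt{C}}(u)\to1$, $\mathtt{K}_\eps\to-1$, $u(R)\to b_0$ and $\eps u'(R)\to 2\sinh\frac{b_0}{2}$ for some $b_0\in[0,a_0]$; plugging the leading terms into the Robin condition $u(R)+\gamma\eps u'(R)=a_0$ gives $b_0+2\gamma\sinh\frac{b_0}{2}=a_0$, which is exactly \eqref{arbequ}, and strict monotonicity of $t\mapsto t+2\gamma\sinh\frac{t}{2}$ on $(0,a_0)$ gives existence and uniqueness of $b\in(0,a_0)$, with $\langg u(R)\ranggg=b$. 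This identifies the leading term of everything in sight and, crucially, the argument $\langg u(R)\ranggg$ appearing inside the $\mathcal O(1)$ correction term of \eqref{amen}.

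Next I would set up a closed system for the \emph{second}-order terms. Write $u(R)=b+\beta\eps+o(\eps)$, $\pmb{\mathtt{C}}(u)=1+c_1\eps+o(\eps)$ and $\eps u'(R)=2\sinh\frac{b}{2}+d_1\eps+o(\eps)$, with unknowns $\beta,c_1,d_1$. The Dirichlet-to-Neumann map \eqref{amen} gives one relation: since $\langg\sinh\frac{u(R)}{2}\rangg=\sinh\frac{b}{2}+\frac{\beta}{2}\cosh\frac{b}{2}\,\eps$ and the $\mathcal O(1)$ term only needs the leading argument $b$, we get
\begin{align*}
d_1=\beta\cosh\tfrac{b}{2}-\tfrac{2}{R}\Bigl(N\cosh^2\tfrac{b}{2}-1\Bigr)\tanh\tfrac{b}{4}.
\end{align*}
The Robin condition $u(R)+\gamma\eps u'(R)=a_0$ at order $\eps$ gives a second relation $\beta+\gamma d_1=0$. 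Solving these two linear equations yields $\beta=\dfrac{2}{R}\cdot\dfrac{\gamma(N\cosh^2\frac b2-1)\tanh\frac b4}{\gamma\cosh\frac b2+1}$, which is \eqref{mainth1-id6}, and then $d_1$, i.e.\ $\langg u'(R)\rangg$ in \eqref{mainth1-id5}. For $c_1$ I would feed these into \eqref{cr-in3} evaluated at $t=R$: that identity reads $\frac{\eps^2}{2}u'^2(R)+(N-1)\eps^2\int_{R/2}^R\frac1r u'^2\,\drr=\pmb{\mathtt{C}}(u)\cosh u(R)+\mathtt{K}_\eps$, and, per the discussion following \eqref{cr-in3} (the Dirac-delta behaviour of $\eps u'^2$, Lemma~\ref{dirac-lem} / Lemma~\ref{cruc-in-thm}), the integral term contributes a known $\mathcal O(\eps)$ quantity $\frac{N-1}{R}\cdot 2(\cosh\frac b2-1)$ at leading order. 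Matching the $\mathcal O(\eps)$ coefficients, using $\frac{\eps^2}{2}u'^2(R)=2\sinh^2\frac b2+2\sinh\frac b2\, d_1\,\eps+o(\eps)$ and $\cosh u(R)=\cosh b+\beta\sinh b\,\eps+o(\eps)$, and solving for $c_1$ produces $c_1=-\frac{2N}{R}(\cosh\frac b2-1)$, i.e.\ \eqref{ceps-1}; along the way one also pins down the $\mathcal O(\eps)$ correction to $\mathtt{K}_\eps$.

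Finally, I would upgrade the formal matching to the rigorous error bound \eqref{11-0305}. This is where the real work is: the $o(\eps)$ remainders must be controlled uniformly, and the claimed rate is $\eps^{3/2}$, not $\eps^2$. The $\sqrt\eps$ losses enter through the $|u'(R)-\langg\Lambda_\eps(u(R))\rangg|\lesssim\sqrt\eps$ estimate in Theorem~\ref{DtoN-map} and through the error in replacing $(N-1)\eps^2\int_{R/2}^R\frac1r u'^2$ by its leading term; propagating these through the (linear, invertible) system above — the invertibility being guaranteed because $\gamma\cosh\frac b2+1>0$ — shows each of $\pmb{\mathtt{C}}(u)$, $u(R)$ and $\eps u'(R)$ inherits an $\mathcal O(\eps^{3/2})$ error, which is \eqref{11-0305}. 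I expect the main obstacle to be exactly this last step: one must quantify the self-consistency between the asymptotics of $u$ near $r=R$ and the asymptotics of the nonlocal coefficient $\pmb{\mathtt{C}}(u)$ (they determine each other), and show the coupling is a contraction / the linearized system is nondegenerate so that the $\sqrt\eps$-level errors do not amplify. The fixed-point/bootstrap structure, together with careful bookkeeping of which quantities are needed only to leading order versus to two terms, is the crux; the algebra of solving the $2\times2$ (or $3\times3$) linear system for $\beta,c_1,d_1$ is routine once the estimates are in place.
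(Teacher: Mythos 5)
Your overall strategy matches the paper's: combine the asymptotic Dirichlet-to-Neumann map (Theorem~\ref{DtoN-map}) with the Robin boundary condition to get a closed relation for $u(R)$, Taylor-expand about $b$, and read off $\beta$ and $d_1$; the $2\times2$ linear system in your step is precisely the paper's substitution of $u(R)=b+\widetilde{b}_\eps$ into~\eqref{conph-1} followed by the expansions~\eqref{0308-a1}--\eqref{0308-a3}, and its invertibility for the reason you give, $\gamma\cosh\frac{b}{2}+1>0$.

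Two points to fix or rethink. First, your route to $c_1$ via~\eqref{cr-in3} at $t=R$ has a factor-of-two slip that would give the wrong answer: by the Dirac-delta lemma (Lemma~\ref{dirac-lem}), $\eps\int_{R/2}^R\frac{1}{r}u'^2\,\drr\to\frac{4}{R}(\cosh\frac{u(R)}{2}-1)$, so the integral term contributes $\frac{4(N-1)}{R}(\cosh\frac{b}{2}-1)\eps$, not $\frac{2(N-1)}{R}(\cosh\frac{b}{2}-1)\eps$; carrying your stated coefficient through the $\mathcal O(\eps)$ balance yields $c_1$ proportional to $1-N\cosh b$ rather than $-\frac{2N}{R}(\cosh\frac{b}{2}-1)$. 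With the correct factor, the algebra does close (you also need $\mathtt{K}_\eps=-\pmb{\mathtt{C}}(u)+\text{exp.\ small}$ from evaluating~\eqref{cr-in3} at $t=\frac{R}{2}$, cf.~\eqref{cr-in5}). The paper instead obtains $c_1$ directly from~\eqref{ce-12-3} in Lemma~\ref{dirac-lem}, which comes from the $t^{N-1}$-weighted integration of~\eqref{cr-in3} (eliminating $\mathtt{K}_\eps$ at the outset) and whose kernel evaluates to $g(R)=\frac{N}{2R}$, hence $\frac{2N}{R}$ immediately; this is cleaner and does not also invoke the DtN identity a second time. Second, the worry in your final paragraph about a self-consistency bootstrap is already resolved by the quoted lemmas: Theorem~\ref{DtoN-map} and Lemma~\ref{dirac-lem} express both $u'(R)$ and $\pmb{\mathtt{C}}(u)$ as functions of $u(R)$ with $\mathcal O(\sqrt\eps)$ and $\mathcal O(\eps^{3/2})$ errors respectively, so the Robin condition yields a single scalar equation for $u(R)$ whose unique solvability near $b$ follows from the strict monotonicity of $s\mapsto s+2\gamma\sinh\frac{s}{2}$; no fixed-point or contraction argument is needed in the proof of the proposition itself, and the stated $\eps^{3/2}$ rate propagates by a single pass through~\eqref{conph-1}.
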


Note that Proposition~\ref{prop-ceps} precisely illustrates the effects of the coefficient $\gamma$ and the \textcolor{red}{domain-size} on the boundary asymptotic expansions of $u$. Particularly, the \textcolor{red}{domain-size} exactly appears in their second order terms \textcolor{red}{with the precise coefficients}, and the third order terms of $\pmb{\mathtt{C}}(u)$, $\eps^{-1}u(R)$ and $u'(R)$ tend to zero as $\eps\downarrow0$, \textcolor{red}{which sufficiently implies how the domain-size influences the layer structure including the slope near the boundary. As an example, we fix $R_{\star}>0$ and let $u_i$ correspond to the unique classical solution of (\ref{eq2})--(\ref{bd2}) with $R=R_i$, $i=1,2$, where ${R}_{\star}<R_1<R_2$. Then for $b>0$ and $\gamma>0$, there exists $\eps_{\star}>0$ depending mainly on $R_{\star}$ such that as $0<\eps<\eps_{\star}$, we have $\pmb{\mathtt{C}}(u_1)<\pmb{\mathtt{C}}(u_2)$, $u_1(R_1)>u_2(R_2)$ and the slopes of boundary layers of $u_1$ and $u_2$ at their boundary points have a bit difference affected by the domain-size:
\begin{align*}
u_2'(R_2)-u_1'(R_1)\approx{C_{\star}}\left(\frac{1}{R_1}-\frac{1}{R_2}\right)\quad\mathrm{as}\quad0<\eps\ll1,
\end{align*}
where $C_{\star}=\frac{2\left(N\cosh^2\frac{b}{2}-1\right)\tanh\frac{b}{4}}{\gamma\cosh\frac{b}{2}+1}$.
We should stress again the importance of \eqref{ceps-1}--\eqref{mainth1-id5} since their leading terms merely reveal the effect of $\gamma$, but do not have the domain-size effect on solutions. Moreover, we will show in Corollary~\ref{cor-0403uv} for the difference of asymptotics between the nonlocal sinh-Gordon equation~\eqref{eq1}--\eqref{bd1} and the standard sinh-Gordon equation~\eqref{v-eqn}--\eqref{v-bdy} due to the domain-size effect.}

Note also that Proposition~\ref{prop-ceps} indicates the existence of boundary layer. To understand the refined structure of the boundary layer, we shall further consider a quite thin region attaching to the boundary:
\begin{align}\label{omepareps}
\pmb{\mathbb{B}_{\partial}^{\eps}}:=\bigg\{r_{\eps}\in[0,R]:&\,\frac{R-r_{\eps}}{\eps}=\mathtt{p}+o_{\eps}(1)\quad\mathrm{for\,\,some}\,\,\mathtt{p}\geq0\,\,\mathrm{independent\,\,of}\,\,\eps\bigg\}.
\end{align}
We are interested in the pointwise asymptotics of the boundary layer with the effect of the domain-size in $\pmb{\mathbb{B}_{\partial}^{\eps}}$. The main results are introduced in Section~\ref{sec-example}.

\subsection{Statement of the main theorems}\label{sec-example}
\noindent

The following theorem makes a specific presentation to assert that
 $u$ indeed exhibits a quite steep boundary layer in the whole region of $\pmb{\mathbb{B}_{\partial}^{\eps}}$ as $\eps\downarrow0$,
which is in extreme contrast with the behavior of $u$ in the region $[0,R]-\pmb{\mathbb{B}_{\partial}^{\eps}}$.
\begin{theorem}\label{bdylayer-exist-thm}
For $\eps>0$, let $u$ be the unique classical solution of (\ref{eq2})--(\ref{bd2}), where $a_0$ and $\gamma$ are positive constants independent of $\eps$. Then for $r_{\eps}\in[0,R]$, 
\begin{align}\label{0315-2018}
\limsup_{\eps\downarrow0}\frac{R-r_{\eps}}{\eps}<\infty\,\,i{\!}f\,\,and\,\,only\,\,i{\!}f\,\,
\begin{cases}
\ds\liminf_{\eps\downarrow0}u(r_{\eps})>0,\\
\ds\liminf_{\eps\downarrow0}{\eps}u'(r_{\eps})>0.
\end{cases}
\end{align}
\end{theorem}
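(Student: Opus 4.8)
The plan is to establish the equivalence by leveraging the asymptotic data already packaged in Proposition~\ref{prop-ceps} together with the integro-differential identity~\eqref{cr-in3} and the interior decay of $u$. I would organize the argument around the monotone first-order ODE structure of~\eqref{eq2} combined with the boundary values of $u(R)$ and $u'(R)$, propagated inward via~\eqref{cr-in3}.

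\smallskip

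\emph{Step 1 (the ``if'' direction via a blow-up at the boundary).} Suppose $\limsup_{\eps\downarrow0}(R-r_{\eps})/\eps<\infty$, so along any subsequence $(R-r_\eps)/\eps\to\mathtt{p}\in[0,\infty)$, i.e.\ $r_\eps\in\pmb{\mathbb{B}_{\partial}^{\eps}}$. Introduce the stretched variable $\xi=(R-r)/\eps$ and set $w_\eps(\xi)=u(R-\eps\xi)$. From~\eqref{eq2} and $\pmb{\mathtt{C}}(u)\to1$, $w_\eps$ solves $w_\eps''=\pmb{\mathtt{C}}(u)\sinh w_\eps+\mathcal{O}(\eps)$ on $[0,R/\eps]$, with initial data $w_\eps(0)=u(R)$, $w_\eps'(0)=-\eps u'(R)$. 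By Proposition~\ref{prop-ceps}, $w_\eps(0)\to b>0$ and $-\eps w_\eps'(0)=-\eps^2 u'(R)\to -2\sinh(b/2)\cdot 0$... more precisely $w_\eps'(0)=-\eps u'(R)\to -2\sinh(b/2)<0$. Hence $w_\eps\to w$ locally uniformly, where $w''=\sinh w$, $w(0)=b$, $w'(0)=-2\sinh(b/2)$; this is exactly the first integral $\tfrac12 w'^2=\cosh w-1$ with the decaying branch, so $w(\xi)$ is the standard monotone profile with $w(\xi)>0$ for all $\xi\ge0$ and $w(\xi)\to0$, $w'(\xi)\to0$ as $\xi\to\infty$, and $w'(\xi)<0$ throughout. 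Evaluating at $\xi=\mathtt{p}+o_\eps(1)$ gives $u(r_\eps)\to w(\mathtt{p})>0$ and $\eps u'(r_\eps)=-w_\eps'(\mathtt{p}+o_\eps(1))\to -w'(\mathtt{p})>0$. Since this holds along every subsequence (the limit profile $w$ is unique), $\liminf_{\eps\downarrow0}u(r_\eps)\ge\inf_{\xi\le\mathtt{p}_{\max}}w(\xi)>0$ and similarly for $\eps u'(r_\eps)$, using that $w'$ is bounded away from $0$ on any compact $\xi$-interval. One technical point: a priori $w_\eps$ is only defined where $r\ge0$; since $r_\eps$ stays within $\mathcal{O}(\eps)$ of $R$ this never obstructs the local convergence, but I would record the uniform bound $\|u\|_{L^\infty}\lesssim1$ (from the maximum principle / comparison already invoked in the text) so that $w_\eps$ has a convergent subsequence by elliptic estimates.

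\smallskip

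\emph{Step 2 (the ``only if'' direction, contrapositive).} Suppose $\limsup_{\eps\downarrow0}(R-r_\eps)/\eps=\infty$; passing to a subsequence, $(R-r_\eps)/\eps\to\infty$. Two subcases. If also $R-r_\eps\to 0$ but slowly, I use~\eqref{cr-in3}: rearranged, $\pmb{\mathtt{C}}(u)\cosh u(t)=\tfrac{\eps^2}{2}u'^2(t)+(N-1)\eps^2\int_{R/2}^t\tfrac1r u'^2\,\drr+\mathtt{K}_\eps$; since the integral term is $O(\eps)$ (by the Dirac-delta behavior of $\eps u'^2$, Lemma~\ref{dirac-lem}, it concentrates at $R$ with finite mass, so $\eps^2\int_{R/2}^t u'^2=\eps\cdot o(1)$ for $t$ bounded away from $R$ on the $\eps$-scale) and $\pmb{\mathtt{C}}(u)\to1$, $\mathtt{K}_\eps\to-1$, we get $\cosh u(t)=1+\tfrac{\eps^2}{2}u'^2(t)+o(1)$ uniformly for $t=r_\eps$; comparing with the stretched profile, $u(r_\eps)\le w((R-r_\eps)/\eps)+o(1)\to 0$ since $(R-r_\eps)/\eps\to\infty$ and $w(\xi)\to0$. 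More robustly, I would prove a pointwise upper bound $u(r)\le C e^{-c(R-r)/\eps}$ for $r\in[R/2,R]$ by a barrier argument: $\bar u(r)=Ce^{-c(R-r)/\eps}$ is a supersolution of the linearized equation for suitable $c<1$, and it dominates $u$ on the boundary of $[R/2,R]$ for $C$ large (using $u(R/2)=o(1)$ from interior decay and $u(R)\le b+o(1)$). Then $(R-r_\eps)/\eps\to\infty$ forces $u(r_\eps)\to 0$, contradicting $\liminf u(r_\eps)>0$; differentiating the barrier (or using $\eps^2 u'^2\le 2(\cosh u-1)+o(\eps)$ from~\eqref{cr-in3}) likewise forces $\eps u'(r_\eps)\to0$. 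Hence the negation of the left side implies the negation of the right side, which is what we need.

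\smallskip

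\emph{Main obstacle.} The delicate part is making Step~2 uniform and quantitative: I need the exponential-decay barrier on $[R/2,R]$ with a rate independent of $\eps$, which requires controlling $\pmb{\mathtt{C}}(u)$ from below (done: $\pmb{\mathtt{C}}(u)\to1$) and handling the non-autonomous drift term $\tfrac{N-1}{r}u'$ in~\eqref{eq2} — near $r=R$ this is a bounded perturbation, but one must check it does not spoil the supersolution inequality; absorbing it forces a slightly smaller rate $c$, which is harmless. The second subtlety is the interplay in Step~1 between the $o_\eps(1)$ in the definition of $\pmb{\mathbb{B}_\partial^\eps}$ and the continuity of the limit profile $w$: since $w,w'$ are Lipschitz on compacts, $u(r_\eps)=w_\eps((R-r_\eps)/\eps)=w(\mathtt{p})+o_\eps(1)$, so the $o_\eps(1)$ shift is absorbed — but I would state this cleanly as a lemma. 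Everything else (the first integral~\eqref{cr-in3}, $\pmb{\mathtt{C}}(u)\to1$, $\mathtt{K}_\eps\to-1$, the $L^\infty$ bound, the Dirac-delta concentration of $\eps u'^2$) is available from the earlier sections and is invoked as a black box.
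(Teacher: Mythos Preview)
Your proof is correct, but the route for the harder direction differs genuinely from the paper's. (Incidentally, your ``if''/``only if'' labels are swapped: your Step~1 is the implication $\limsup(R-r_\eps)/\eps<\infty\Rightarrow\liminf u(r_\eps)>0$, which is the ``only if'' part; Step~2 is the contrapositive of the ``if'' part. The mathematics is unaffected.)

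For the easy direction (your Step~2), both you and the paper simply invoke the exponential decay estimate \eqref{gradesti}: if $(R-r_\eps)/\eps\to\infty$ along a subsequence then $u(r_\eps)\to0$, so $\liminf u(r_\eps)=0$. Your barrier discussion is just a rederivation of \eqref{gradesti} and can be replaced by a citation.

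For the hard direction (your Step~1), you use a blow-up/compactness argument: rescale to $w_\eps(\xi)=u(R-\eps\xi)$, pass to the limit ODE $w''=\sinh w$ with data $w(0)=b$, $w'(0)=-2\sinh(b/2)$, identify the unique decaying profile $w(\xi)=4\,\mathrm{arctanh}\bigl(\tanh(b/4)\,e^{-\xi}\bigr)>0$, and read off $u(r_\eps)\to w(\mathtt{p})>0$. The paper instead works directly with the first integral: from \eqref{cr-in2-add} it derives the pointwise estimate
\[
\Bigl|\log\tfrac{\tanh(u(R)/4)}{\tanh(u(r_\eps)/4)}-\sqrt{\pmb{\mathtt{C}}(u)}\,\tfrac{R-r_\eps}{\eps}\Bigr|\lesssim\tfrac{\sqrt{\eps}}{\sinh(u(r_\eps)/2)}\cdot\tfrac{R-r_\eps}{\eps},
\]
and then argues by contradiction with an intermediate-value trick: if some $\hat r_\eps\in\pmb{\mathbb{B}_\partial^\eps}$ had $u(\hat r_\eps)\to0$, the estimate forces $\eps^{-1/2}\sinh(u(\hat r_\eps)/2)\to0$; but then one can pick $\tilde r_\eps\in[\hat r_\eps,R]$ with $u(\tilde r_\eps)=2\log(\sqrt{\eps}+\sqrt{1+\eps})$, so $\eps^{-1/2}\sinh(u(\tilde r_\eps)/2)=1$, contradicting the same estimate applied at $\tilde r_\eps$. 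Your compactness approach is more standard in singular-perturbation arguments and transfers readily to other nonlinearities; the paper's argument is more elementary (no compactness, no subsequence extraction) and yields the quantitative relation \eqref{1120-2} as a by-product, which it reuses later.
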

The proof of Theorem~\ref{bdylayer-exist-thm} is stated in Section~\ref{sec-pf-0315-2018}.

Moreover,
to get the refined structure of the thin layer in $\pmb{\mathbb{B}_{\partial}^{\eps}}$, 
we focus on those points $r^{\eps}_{\mathtt{p};\mathtt{q}}\in\pmb{\mathbb{B}_{\partial}^{\eps}}$ satisfying
\begin{align}\label{0308-afr}
\llang\frac{R-r^{\eps}_{\mathtt{p};\mathtt{q}}}{\eps}\rrang=\mathtt{p}+\frac{\mathtt{q}}{R}\eps,\,\,\mathrm{where}\,\,\mathtt{p}\geq0\,\,\mathrm{and}\,\,\mathtt{q}\in\mathbb{R}\,\,\mathrm{are\,\,independent\,\,of}\,\,\eps.
\end{align}
The setting of (\ref{0308-afr}) with specific orders of $\eps$ is mainly due to
the boundary asymptotic expansions of $u(R)$ and $u'(R)$ in Proposition~\ref{prop-ceps}
so that we can compare them with $u(r^{\eps}_{\mathtt{p};\mathtt{q}})$
and $u'(r^{\eps}_{\mathtt{p};\mathtt{q}})$ in a direct way.
The following theorem reveals that the leading order terms of $u(r^{\eps}_{\mathtt{p};\mathtt{q}})$
and $u'(r^{\eps}_{\mathtt{p};\mathtt{q}})$
are uniquely determined by $\mathtt{p}$ and the second order terms of that
depend on both $\mathtt{p}$ and $\mathtt{q}$. Moreover,  the effect of the \textcolor{red}{domain-size}
appearing in their second order terms are precisely described.

\begin{theorem}[Pointwise descriptions with \textcolor{red}{domain-size}  effects in $\pmb{\mathbb{B}_{\partial}^{\eps}}$]\label{new-cor}
Under the same hypotheses as in Theorem~\ref{bdylayer-exist-thm}, as $0<\eps\ll1$,
for $r^{\eps}_{\mathtt{p};\mathtt{q}}\in\pmb{\mathbb{B}_{\partial}^{\eps}}$ obeying (\ref{0308-afr}), 
the precise first two terms of $u(r^{\eps}_{\mathtt{p};\mathtt{q}})$ and $u'(r^{\eps}_{\mathtt{p};\mathtt{q}})$
are depicted as follows: 
\begin{align}
&\langg{u}(r^{\eps}_{\mathtt{p};\mathtt{q}})\rangg
=\,k(\mathtt{p})+\frac{\eps}{R}{\mathcal{H}^{\gamma;b}_{\mathtt{p};\mathtt{q}}}\sinh\frac{k(\mathtt{p})}{2},\label{u-0d}\\
\langg{u}'(r^{\eps}_{\mathtt{p};\mathtt{q}})\rangg=&\,2\sinh\frac{k(\mathtt{p})}{2}\cdot\left[\frac{1}{\eps}-\frac{1}{R}\left(2N\sinh^2\frac{b}{4}+\frac{N-1}{2}\sech^2\frac{k(\mathtt{p})}{4}-\frac{\mathcal{H}^{\gamma;b}_{\mathtt{p};\mathtt{q}}}{2}\cosh\frac{k(\mathtt{p})}{2}\right)\right],\label{u-1std}
\end{align}
 where $k(\mathtt{p})\in(0,b]$ is uniquely determined by
\begin{align}\label{0308-k}
\left(1+\frac{N-1}{2R}\right)\log\frac{\tanh\frac{b}{4}}{\tanh\frac{k(\mathtt{p})}{4}}+\frac{N-1}{4R}\left(\tanh^2\frac{k(\mathtt{p})}{4}-\tanh^2\frac{b}{4}\right)=\,\mathtt{p},
\end{align}
and
\begin{align}\label{mathcal-H}
\mathcal{H}^{\gamma;b}_{\mathtt{p};\mathtt{q}}:=\frac{\gamma\left(N\cosh^2\frac{b}{2}-1\right)\sech^2\frac{b}{4}}{\gamma\cosh\frac{b}{2}+1}\cdot\frac{1+\frac{N-1}{2R}\sech^2\frac{b}{4}}{1+\frac{N-1}{2R}\sech^2\frac{k(\mathtt{p})}{4}}-\frac{2\mathtt{q}-4N\mathtt{p}\sinh^2\frac{b}{4}}{1+\frac{N-1}{2R}\sech^2\frac{k(\mathtt{p})}{4}}.
\end{align}
Moreover, 
the convergence
\begin{align}\label{0403-2018}
\frac{1}{\eps}\left|u(r^{\eps}_{\mathtt{p};\mathtt{q}})-\langg{u}(r^{\eps}_{\mathtt{p};\mathtt{q}})\rangg
\right|+\left|u'(r^{\eps}_{\mathtt{p};\mathtt{q}})-\langg{u}'(r^{\eps}_{\mathtt{p};\mathtt{q}})\rangg\right|\stackrel{\eps\downarrow0}{-\!\!\!-\!\!\!\rightarrow}0
\end{align}
is uniformly as $\mathtt{p}$ is located in a bounded subinterval of $[0,\infty)$.  
\end{theorem}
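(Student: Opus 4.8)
The plan is to bootstrap from Proposition~\ref{prop-ceps}, which already pins down the first two terms of $\pmb{\mathtt{C}}(u)$, $u(R)$ and $u'(R)$, and propagate that information inward through the integro-differential identity~\eqref{cr-in3}. First I would rewrite~\eqref{cr-in3} evaluated at a generic $t=r^{\eps}_{\mathtt{p};\mathtt{q}}$, giving
\begin{align*}
\frac{\eps^2}{2}u'^2(r^{\eps}_{\mathtt{p};\mathtt{q}})=\pmb{\mathtt{C}}(u)\cosh u(r^{\eps}_{\mathtt{p};\mathtt{q}})+\mathtt{K}_{\eps}-(N-1)\eps^2\int_{r^{\eps}_{\mathtt{p};\mathtt{q}}}^{R}\frac{1}{r}u'^2(r)\drr+(N-1)\eps^2\int_{r^{\eps}_{\mathtt{p};\mathtt{q}}}^{R}\frac{1}{r}u'^2-\text{(same at }R\text{)},
\end{align*}
i.e. subtracting the identity at $R$ to eliminate $\mathtt{K}_{\eps}$. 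Since Proposition~\ref{prop-ceps} gives $\langg\pmb{\mathtt{C}}(u)\rangg$ and $\langg u(R)\rangg$, and since Lemma~\ref{dirac-lem} (assumed) says $\eps u'^2$ acts as a Dirac mass of known weight at $r=R$, the curvature correction term $(N-1)\eps^2\int_{r^{\eps}_{\mathtt{p};\mathtt{q}}}^{R}\frac{1}{r}u'^2\,\drr$ is itself of order $\eps$ with a computable leading coefficient (essentially $\frac{N-1}{R}$ times that Dirac weight, once one checks the mass concentrates at scale $\eps$ near $R$). This converts the ODE into a first-order separable relation for $u$ as a function of the stretched variable $(R-r)/\eps$, valid up to $O(\eps)$, with the $O(\eps)$ terms also explicit.

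The second step is to integrate that separable relation. At leading order one gets $\eps u'(r)=2\sinh\frac{u(r)}{2}\cdot(1+O(\eps))$, which integrates (via $\frac{d}{dr}\log\tanh\frac{u}{4}=\frac{u'}{2\sinh\frac{u}{2}}$) to the implicit equation for $k(\mathtt{p})$ in~\eqref{0308-k} — the $\frac{N-1}{2R}$ and $\frac{N-1}{4R}\tanh^2$ pieces there are exactly the first-order curvature corrections surviving the integration from $r^{\eps}_{\mathtt{p};\mathtt{q}}$ to $R$, using $\langg u(R)\rangg=b+O(\eps)$ as the boundary datum. Then I would expand one order further: write $u(r^{\eps}_{\mathtt{p};\mathtt{q}})=k(\mathtt{p})+\eps w(\mathtt{p},\mathtt{q})+o(\eps)$, substitute into the integrated relation keeping all $O(\eps)$ contributions — these come from three sources: (a) the second-order term of $u(R)$ from~\eqref{mainth1-id6} (the $\gamma$-dependent piece), (b) the $\frac{\mathtt{q}}{R}\eps$ correction in the definition~\eqref{0308-afr} of $r^{\eps}_{\mathtt{p};\mathtt{q}}$ together with the $-4N\mathtt{p}\sinh^2\frac{b}{4}$-type terms coming from $\langg\pmb{\mathtt{C}}(u)\rangg$ and the curvature integral, and (c) the $\sech^2$ Jacobian factors from differentiating the implicit relation~\eqref{0308-k} in its parameter. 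Collecting these yields $w(\mathtt{p},\mathtt{q})=\frac{1}{R}\mathcal{H}^{\gamma;b}_{\mathtt{p};\mathtt{q}}\sinh\frac{k(\mathtt{p})}{2}$, i.e.~\eqref{u-0d}; differentiating the expansion of $u$ and re-using the separable relation gives~\eqref{u-1std}, where the $2N\sinh^2\frac{b}{4}$ and $\frac{N-1}{2}\sech^2\frac{k(\mathtt{p})}{4}$ terms are precisely the Dirac-weight and curvature-Jacobian contributions.

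For the uniformity claim~\eqref{0403-2018} and the error rate, I would track constants through the above: the $O(\eps^{3/2})$ error in~\eqref{11-0305} feeds in as an $O(\sqrt{\eps})$ error in $u'(R)$ hence an $o(\eps)$ error in $u$ after integration, and the Gr\"onwall-type control of $u'^2$ away from $R$ (from the interior exponential decay noted before~\eqref{cr-in3}) shows the remainder in the curvature integral is uniformly $o(\eps)$ as long as $\mathtt{p}$ stays in a fixed bounded interval — the implicit function $k(\mathtt{p})$ and all the hyperbolic factors are then uniformly bounded and bounded away from their degenerate values, so no uniformity is lost. The main obstacle I anticipate is step one: rigorously justifying that the curvature integral $(N-1)\eps^2\int_{r^{\eps}_{\mathtt{p};\mathtt{q}}}^{R}\frac{1}{r}u'^2\,\drr$ has leading term $\frac{N-1}{R}\eps\cdot(\text{Dirac weight})$ \emph{uniformly in} $\mathtt{p}$ — this requires knowing not just that $\eps u'^2\rightharpoonup\mathcal{C}\delta_R$ but that the mass truly sits in an $O(\eps)$-neighborhood of $R$ with controlled tails, which is where Lemma~\ref{dirac-lem} and the sharp estimate~\eqref{11-0305} must be pushed to their limit; once that concentration is quantitative, the rest is separation of variables and bookkeeping of $O(\eps)$ terms.
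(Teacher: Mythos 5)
Your proposal is essentially the paper's route: exploit the integro-differential identity~(\ref{cr-in3}) and the concentration estimate of Lemma~\ref{dirac-lem} to produce a separable pointwise relation for $u'$ in terms of $u$ (the paper's~(\ref{chchchha-2})), integrate using $\mathrm{d}\log\tanh\frac{u}{4}=\frac{u'}{2\sinh(u/2)}\,\mathrm{d}r$, and bookkeep the first two orders in $\eps$, with the only structural difference being that the paper organizes the expansion as a forward map $(k,\widetilde{k})\mapsto(\mathtt{p},\mathtt{q})$ (Lemma~\ref{mainthm-2}) which it then inverts, rather than your direct ansatz $u(r^\eps_{\mathtt{p};\mathtt{q}})=k(\mathtt{p})+\eps w+o(\eps)$. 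The obstacle you rightly flag as the crux --- making $(N-1)\eps^2\int\frac{1}{r}u'^2\,\drr$ quantitative and uniform in $\mathtt{p}$ when the relevant endpoint sits inside the boundary layer --- is resolved in~(\ref{ccc-hahaha})--(\ref{pi-eps}): one integrates $\eps u'^2\approx 2\sqrt{\pmb{\mathtt{C}}(u)}\,u'\sinh\frac{u}{2}$ from $R/2$ up to $r_\eps$ to obtain the \emph{position-dependent} partial weight $8g(R)\sinh^2\frac{u(r_\eps)}{4}$ with error $|\pi_{\eps;g}(r_\eps)|\lesssim\sqrt{\eps}$ uniform in $r_\eps$, not the fixed Dirac mass your sketch implicitly invokes, and once that refinement is in hand the remainder is the bookkeeping you describe.
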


The uniqueness of (\ref{0308-k}) is trivially due to the fact that 
$\left(1+\frac{N-1}{2R}\right)\log\frac{\tanh\frac{b}{4}}{\tanh\frac{k}{4}}+\frac{N-1}{4R}\left(\tanh^2\frac{k}{4}-\tanh^2\frac{b}{4}\right)$ is strictly decreasing to $k$ in $(0,b]$.
The proof of Theorem~\ref{new-cor} is stated in Section~\ref{sec-mainthm-2}.

Theorem~\ref{new-cor} establishes a rigorous analysis technique for rendering the \textcolor{red}{domain-size} effect
on the thin boundary layer of $u$. Moreover, we can calculate
the precise first two terms of $\langg\frac{\mathrm{d}^nu}{\mathrm{d}r^n}(r^{\eps}_{\mathtt{p};\mathtt{q}})\rangg$ ($n\geq2$),
where the leading term is the order of $\eps^{-n}$
and the second order term $\mathcal{O}(1)\eps^{-n+1}$ includes the \textcolor{red}{domain-size} effect. To the best of the author knowledge, 
theose asymptotics described in Theorem~\ref{new-cor} have not been obtained explicitly in
other literatures about the sinh-Gordon equation~(\ref{eq2})--(\ref{bd2}).

Here we give an application as follows.

\begin{example}\label{ex212-1}
We establish asymptotics of ${u}(r_{\eps})$, ${u}'(r_{\eps})$
and $r_{\eps}$ as $\eps$ tends to zero, where  
\begin{equation}\label{apply-0216}
|u(R)-{u}(r_{\eps})|=\frac{1}{2}|u(R)-b|
\end{equation}
and $b=\displaystyle\lim_{\eps\downarrow0}u(R)$ (cf. Proposition~\ref{prop-ceps}).
Firstly, by (\ref{mainth1-id6}) and (\ref{apply-0216}), 
it yields that
\begin{align}\label{mainth1-ex1}
\langg{u}(r_{\eps})\rangg=b+\frac{\eps}{R}\cdot\frac{\gamma\left(N\cosh^2\frac{b}{2}-1\right)\tanh\frac{b}{4}}{\gamma\cosh\frac{b}{2}+1},
\end{align}
which shares the same leading order term with $u(R)$,
and $|u(r_{\eps})-u(R)|$ is merely of the order $\eps$.
Hence, by the comparison of (\ref{u-0d}) and (\ref{mainth1-ex1}), 
 one may obtain $k(\mathtt{p})=b$ and 
$\mathcal{H}^{\gamma;b}_{\mathtt{p};\mathtt{q}}=\frac{\gamma(N\cosh^2\frac{b}{2}-1)\sech^2\frac{b}{4}}{2(\gamma\cosh\frac{b}{2}+1)}$. Along with (\ref{0308-k})--(\ref{mathcal-H}),
it turns out that 
\begin{align*}
\mathtt{p}=0\,\,\mathrm{and}\,\,\mathtt{q}=\frac{\gamma}{4}\left(1+\frac{N-1}{2R}\sech^2\frac{b}{4}\right)\frac{\left(N\cosh^2\frac{b}{2}-1\right)\sech^2\frac{b}{4}}{\gamma\cosh\frac{b}{2}+1}. 
\end{align*}
The conclusion is
 $R-r_{\eps}\sim\eps^2$ with asymptotics
\begin{align}\label{haha021418-1}
\frac{R-r_{\eps}}{\eps^2}=\frac{\gamma}{4R}\left(1+\frac{N-1}{2R}\sech^2\frac{b}{4}\right)\frac{\left(N\cosh^2\frac{b}{2}-1\right)\sech^2\frac{b}{4}}{\gamma\cosh\frac{b}{2}+1}+{o}_{\eps}(1)
\end{align}
and ${u}'(r_{\eps})\sim\eps^{-1}$ with asymptotics
\begin{align}\label{haha021418-2}
\langg{u}'(r_{\eps})\rangg=
\frac{2}{\eps}\sinh\frac{b}{2}-\frac{1}{R}&\left(4N\sinh\frac{b}{2}\sinh^2\frac{b}{4}+2(N-1)\tanh\frac{b}{4}-\frac{\gamma\left(N\cosh^2\frac{b}{2}-1\right)\tanh\frac{b}{4}\cosh\frac{b}{2}}{\gamma\cosh\frac{b}{2}+1}\right).
\end{align}
\end{example}

It seems that (\ref{haha021418-1}) and (\ref{haha021418-2}) are not easy to be obtained via the method of matching asymptotic expansions because it involves the exact second order terms of the asymptotics of the nonlocal coefficient $\pmb{\mathtt{C}}(u)$.

For thin layered solutions of (\ref{eq2})--(\ref{bd2}), we are also interested in its boundary concentration phenomenon.
To see such phenomena, let $\mathcal{F}\in\mathrm{C}_{\mathrm{loc}}^{0,\tau}(\mathbb{R})$, $\tau\in(0,1]$, be a locally  H\"{o}lder (or Lipschitz)
continuous function with exponent $\tau$ that is independent of $\eps$.
Then both ${\eps}^{-1}|\mathcal{F}(\eps{u}'(r))-\mathcal{F}(0)|$ and ${\eps}^{-1}|\mathcal{F}(u(r))-\mathcal{F}(0)|$ blows up asymptotically near the boundary point.
Indeed, by Lemma~\ref{asy-radial-lem}(ii), we obtain that for $r\in[0,R)$
and $0<\eps\ll1$,
\begin{equation}\label{0217-0217}
    \begin{aligned}
|\mathcal{F}(\eps{u}'(r))-\mathcal{F}(0)|&+|\mathcal{F}(u(r))-\mathcal{F}(0)|\\
&\lesssim\eps^{\tau}|u'(r)|^{\tau}+|u(r)|^{\tau}\lesssim{e}^{-\frac{M_1}{\eps}\tau(R-r)}.
\end{aligned}
\end{equation}
In particular,  as $\eps\downarrow0$,
 both ${\eps}^{-1}|\mathcal{F}(\eps{u}'(r))-\mathcal{F}(0)|$ and ${\eps}^{-1}|\mathcal{F}(u(r))-\mathcal{F}(0)|$
are uniformly bounded to $\eps$ in $\mathrm{L}^1([0,R])$, and converge to zero uniformly in any compact subset of $[0,R)$. 
However, by Proposition~\ref{prop-ceps}, 
${\eps}^{-1}|\mathcal{F}(\eps{u}'(R))-\mathcal{F}(0)|$ and ${\eps}^{-1}|\mathcal{F}(u(R))-\mathcal{F}(0)|$ diverge to infinity (note that $b>0$). This also asserts 
the boundary concentration phenomenon of ${\eps}^{-1}|\mathcal{F}(\eps{u}'(r))-\mathcal{F}(0)|$ and ${\eps}^{-1}|\mathcal{F}(u(r))-\mathcal{F}(0)|$.

The following theorem precisely describes their boundary concentration phenomena via Dirac delta functions concentrated at boundary points~(see Definition~\ref{def1}).

\begin{theorem}[Boundary concentration phenomenon]\label{mainthm-1}
Assume again that the same hypotheses as in Theorem~\ref{bdylayer-exist-thm} hold.
Then for $\mathcal{F}\in\mathrm{C}_{\mathrm{loc}}^{0,\tau}(\mathbb{R};\mathbb{R})$ independent of $\eps$, 
as $\eps\downarrow0$, $\frac{\mathcal{F}(\eps{u}')-\mathcal{F}(0)}{\eps}$ 
and $\frac{\mathcal{F}(u)-\mathcal{F}(0)}{\eps}$ have boundary concentration phenomena
described as follows:
\begin{itemize}
\item[\textbf{(I-i)}] If $\int_{0+}^b\frac{\mathcal{F}(2\sinh\frac{t}{2})-\mathcal{F}(0)}{2\sinh\frac{t}{2}}\dtt\neq0$, there holds
\begin{align}
\frac{\mathcal{F}(\eps{u}')-\mathcal{F}(0)}{\eps}\rightharpoonup\left(\int_{0+}^b\frac{\mathcal{F}(2\sinh\frac{t}{2})-\mathcal{F}(0)}{2\sinh\frac{t}{2}}\dtt\right)\delta_R\quad\,weakly\,\,in\,\,\mathrm{C}([0,R];\mathbb{R}).\label{mainth1-id1-20180204}
\end{align}
\item[\textbf{(I-ii)}] If $\int_{0+}^b\frac{\mathcal{F}(t)-\mathcal{F}(0)}{2\sinh\frac{t}{2}}\dtt\neq0$, there holds
\begin{align}
\frac{\mathcal{F}(u)-\mathcal{F}(0)}{\eps}\rightharpoonup\left(\int_{0+}^b\frac{\mathcal{F}(t)-\mathcal{F}(0)}{2\sinh\frac{t}{2}}\dtt\right)\delta_R\quad\,weakly\,\,in\,\,\mathrm{C}([0,R];\mathbb{R}).\label{mainth1-id2-20180204}
\end{align}
\end{itemize}
Moreover, for $r_{\mathtt{p}}^{\eps}\in\pmb{\mathbb{B}_{\partial}^{\eps}}$
with $\displaystyle\lim_{\eps\downarrow0}\frac{R-r_{\mathtt{p}}^{\eps}}{\eps}=\mathtt{p}$, as $\eps\downarrow0$ we have
\begin{itemize}
\item[\textbf{(II-i)}]  If $\int_{k(\mathtt{p})}^{b}\frac{\mathcal{F}(2\sinh\frac{t}{2})}{2\sinh\frac{t}{2}}\dtt\neq0$, there holds
\begin{align}
\frac{\mathcal{F}(\eps{u}')}{\eps}\chi_{[r_{\mathtt{p}}^{\eps},R]}&\rightharpoonup\left(\int_{k(\mathtt{p})}^{b}\frac{\mathcal{F}(2\sinh\frac{t}{2})}{2\sinh\frac{t}{2}}\dtt\right)\delta_R\quad\,weakly\,\,in\,\,\mathrm{C}([0,R];\mathbb{R}),\label{mainth1-id1-20180319}
\end{align}
where characteristic function~$\chi_{[r_{\mathtt{p}}^{\eps},R]}$ 
is defined by $\chi_{[r_{\mathtt{p}}^{\eps},R]}(r)=1$ for $r\in[r_{\mathtt{p}}^{\eps},R]$, and $\chi_{[r_{\mathtt{p}}^{\eps},R]}(r)=0$ for $r\not\in[r_{\mathtt{p}}^{\eps},R]$, and $k(\mathtt{p})\in(0,b]$ is uniquely determined by (\ref{0308-k}).
\item[\textbf{(II-ii)}]  If $\int_{k(\mathtt{p})}^{b}\frac{\mathcal{F}(t)}{2\sinh\frac{t}{2}}\dtt\neq0$, there holds
\begin{align}
\frac{\mathcal{F}(u)}{\eps}\chi_{[r_{\mathtt{p}}^{\eps},R]}&\rightharpoonup\left(\int_{k(\mathtt{p})}^{b}\frac{\mathcal{F}(t)}{2\sinh\frac{t}{2}}\dtt\right)\delta_R\quad\,weakly\,\,in\,\,\mathrm{C}([0,R];\mathbb{R}).\label{mainth1-id2-20180319}
\end{align}
\end{itemize}
\end{theorem}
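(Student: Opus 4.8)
The plan is to verify Definition~\ref{def1} by hand: for each of the four densities $g_{\eps}$ in \textbf{(I-i)}--\textbf{(II-ii)} and an arbitrary $h\in\mathrm{C}([0,R];\mathbb{R})$ independent of $\eps$, I would evaluate $\lim_{\eps\downarrow0}\int_0^R h\,g_{\eps}\drr$ and show it equals $h(R)$ times the claimed weight. The whole computation rests on the substitution $s=u(r)$. Since $a_0>0$ forces $u>0$ on $[0,R]$ and, integrating $(r^{N-1}u')'=\eps^{-2}r^{N-1}\pmb{\mathtt{C}}(u)\sinh u>0$ from the origin, $u'(r)>0$ on $(0,R]$, the map $r\mapsto s=u(r)$ is a strictly increasing $\mathrm{C}^1$ diffeomorphism of $(0,R]$ onto $(u(0),u(R)]$, so $\drr=\mathrm{d}s/u'(u^{-1}(s))$ and $\tfrac1{u'}=\tfrac{\eps}{\eps u'}$ are legitimate. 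The two analytic inputs I would import are: \textbf{(a)} the leading-order comparison $\eps u'(r)=2\sinh\tfrac{u(r)}{2}(1+o_{\eps}(1))$, valid uniformly wherever $u(r)$ stays bounded below by a fixed positive constant (and $\eps u'(r)\asymp 2\sinh\tfrac{u(r)}{2}$ away from the centre), which comes from the integro-differential identity \eqref{cr-in3} together with $\pmb{\mathtt{C}}(u)\to1$, $\mathtt{K}_{\eps}\to-1$ and the $\mathcal{O}(\eps)$-bound for $(N-1)\eps^2\int_{R/2}^t\tfrac1r u'^2\drr$ furnished by Lemma~\ref{dirac-lem} --- this is essentially the content of Lemma~\ref{asy-radial-lem}; and \textbf{(b)} the thinness of the layer (Theorem~\ref{bdylayer-exist-thm}): $u(r_{\eps})$ bounded below by a positive constant forces $R-r_{\eps}\lesssim\eps$, hence $u^{-1}(s)\to R$ uniformly for $s$ in any $[\eta,b]$ with $\eta>0$, while $u(R)\to b$ (Proposition~\ref{prop-ceps}) and $u(r)\to0$ for every fixed $r<R$ by the interior exponential decay \eqref{0217-0217}.

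I would run \textbf{(I-i)} by a two-stage cut-off. Fix a small $\eta>0$ and split $[0,R]$ into $\{u\le\eta\}=[0,r_{\eta}]$ and $\{u>\eta\}=(r_{\eta},R]$ with $u(r_{\eta})=\eta$. On $\{u\le\eta\}$, H\"{o}lder continuity gives $|\mathcal{F}(\eps u'(r))-\mathcal{F}(0)|\lesssim|\eps u'(r)|^{\tau}$; controlling $\eps u'$ from below by a multiple of $2\sinh\tfrac{u(r)}{2}$ away from the centre (the exponentially small centre region contributing $\mathcal{O}(e^{-c\tau/\eps})$ by \eqref{0217-0217}), the substitution $s=u(r)$ yields
\[
\frac1\eps\int_{\{u\le\eta\}}|h|\,\bigl|\mathcal{F}(\eps u')-\mathcal{F}(0)\bigr|\drr\lesssim\int_{0}^{\eta}\bigl|\eps u'(u^{-1}(s))\bigr|^{\tau-1}\mathrm{d}s\lesssim\int_0^{\eta}s^{\tau-1}\mathrm{d}s\lesssim\eta^{\tau},
\]
a bound uniform in $\eps$. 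On $\{u>\eta\}$ the substitution gives $\int_{\eta}^{u(R)}h(u^{-1}(s))\tfrac{\mathcal{F}(\eps u'(u^{-1}(s)))-\mathcal{F}(0)}{\eps u'(u^{-1}(s))}\mathrm{d}s$, in which $\eps u'(u^{-1}(s))\ge 2\sinh\tfrac{\eta}{2}(1+o_{\eps}(1))$ stays bounded below, $u^{-1}(s)\to R$ uniformly (hence $h(u^{-1}(s))\to h(R)$) and $\eps u'(u^{-1}(s))\to 2\sinh\tfrac{s}{2}$; bounded convergence gives the limit $h(R)\int_{\eta}^{b}\tfrac{\mathcal{F}(2\sinh\frac{t}{2})-\mathcal{F}(0)}{2\sinh\frac{t}{2}}\dtt$. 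Letting $\eta\downarrow0$ (the integrand is $\mathcal{O}(t^{\tau-1})$, hence integrable at $0$ --- whence the notation $\int_{0+}^{b}$) and using the uniform $\mathcal{O}(\eta^{\tau})$-bound from the first stage, I conclude $\lim_{\eps\downarrow0}\int_0^R h\,\tfrac{\mathcal{F}(\eps u')-\mathcal{F}(0)}{\eps}\drr=h(R)\int_{0+}^{b}\tfrac{\mathcal{F}(2\sinh\frac{t}{2})-\mathcal{F}(0)}{2\sinh\frac{t}{2}}\dtt$, which under the hypothesis of \textbf{(I-i)} is a nonzero multiple of $h(R)$, i.e.\ \eqref{mainth1-id1-20180204}. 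Part \textbf{(I-ii)} is the identical argument with $\mathcal{F}(\eps u'(u^{-1}(s)))$ replaced by $\mathcal{F}(s)$, giving the weight $\int_{0+}^{b}\tfrac{\mathcal{F}(t)-\mathcal{F}(0)}{2\sinh\frac{t}{2}}\dtt$.

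For \textbf{(II-i)}--\textbf{(II-ii)} the density carries the extra factor $\chi_{[r_{\mathtt{p}}^{\eps},R]}$, so after $s=u(r)$ the integral runs only over $s\in[u(r_{\mathtt{p}}^{\eps}),u(R)]$. By Theorem~\ref{new-cor} (or Lemma~\ref{asy-radial-lem}) the hypothesis $\tfrac{R-r_{\mathtt{p}}^{\eps}}{\eps}\to\mathtt{p}$ forces $u(r_{\mathtt{p}}^{\eps})\to k(\mathtt{p})$, so this $s$-range converges to $[k(\mathtt{p}),b]$; since its left endpoint $k(\mathtt{p})>0$, the integrand (which is $\tfrac{\mathcal{F}(\eps u')}{\eps u'}$, resp.\ $\tfrac{\mathcal{F}(u)}{\eps u'}$, after the substitution) has no singularity at $s=0$ and is already uniformly bounded, so nothing needs to be subtracted. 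Bounded convergence --- with $u^{-1}(s)\to R$, $h(u^{-1}(s))\to h(R)$, $\eps u'(u^{-1}(s))\to 2\sinh\tfrac{s}{2}$ --- gives the limit $h(R)\int_{k(\mathtt{p})}^{b}\tfrac{\mathcal{F}(2\sinh\frac{t}{2})}{2\sinh\frac{t}{2}}\dtt$ (resp.\ with $\mathcal{F}(t)$), and since the supports $[r_{\mathtt{p}}^{\eps},R]$ contract to $\{R\}$ this is exactly a concentration at $\delta_R$, establishing \eqref{mainth1-id1-20180319}--\eqref{mainth1-id2-20180319}.

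The step I expect to be the main obstacle is the uniform-in-$\eps$ estimate of the near-centre contribution, i.e.\ $\tfrac1\eps\int_{\{u\le\eta\}}|\mathcal{F}(\eps u')-\mathcal{F}(0)|\drr=\mathcal{O}(\eta^{\tau})$ (and likewise with $\mathcal{F}(u)$): the pointwise identity $\eps u'=2\sinh\tfrac{u}{2}(1+o_{\eps}(1))$ degenerates near $r=0$, where $u'$ vanishes but $u$ does not, so the estimate has to combine the sharp lower bound $\eps u'(r)\gtrsim 2\sinh\tfrac{u(r)}{2}$ away from the centre, the exponential smallness of $u,u'$ in the deep interior from \eqref{0217-0217}, and the H\"{o}lder exponent $\tau>0$ making $\int_0^{\eta}s^{\tau-1}\mathrm{d}s$ finite --- which is precisely where the fine estimates of Sections~\ref{sec-dtn}--\ref{sec-curvature} (notably Lemmas~\ref{dirac-lem} and \ref{asy-radial-lem}) are used. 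Everything past that is a routine change of variables together with the dominated/bounded convergence theorem.
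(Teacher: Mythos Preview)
Your proposal is correct and follows essentially the same scheme as the paper: both arguments reduce everything to the substitution $s=u(r)$, feed in the leading-order relation $\eps u'(r)=2\sinh\tfrac{u(r)}{2}+o_{\eps}(1)$ (this is \eqref{cr-in2-add} in Lemma~\ref{cruc-in-thm}, not Lemma~\ref{asy-radial-lem} as you cite), and isolate the small-$u$ contribution via H\"older continuity of $\mathcal{F}$ together with the exponential interior decay \eqref{0217-0217}.

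The only structural difference is the cut-off. The paper slices in the \emph{spatial} variable at the $\eps$-dependent point $R-\eps^{1-\tau/2}$: the piece on $[0,R-\eps^{1-\tau/2}]$ dies outright by integrating \eqref{0217-0217}, and the remaining thin shell is handled by the substitution and the split $\int_{u(R-\eps^{1-\tau/2})}^{0+}+\int_{0+}^{b}+\int_{b}^{u(R)}$. You instead slice at a \emph{fixed level} $\{u\le\eta\}$ and then let $\eta\downarrow0$, which is a clean double-limit argument but requires the $\{u\le\eta\}$-contribution to be $o(1)$ \emph{uniformly in $\eps$}. That bound does come out of the same ingredients --- integrating \eqref{0217-0217} over $[0,r_{\eta}]$ gives $\lesssim e^{-M_1\tau(R-r_{\eta})/\eps}$, and since $u(r_{\eta})=\eta>0$ forces $(R-r_{\eta})/\eps\to p(\eta)$ with $p(\eta)\to\infty$ as $\eta\downarrow0$ (Theorem~\ref{bdylayer-exist-thm} and \eqref{1120-2}), the bound is $\lesssim e^{-M_1\tau p(\eta)}+o_{\eps}(1)$; the exponent may not literally be $\eta^{\tau}$ as you wrote, but it vanishes as $\eta\downarrow0$, which is all that is needed. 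For \textbf{(II-i)}--\textbf{(II-ii)} your argument coincides with the paper's.
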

\begin{remark}
Integrals $\int_{0+}^b\frac{\mathcal{F}(2\sinh\frac{t}{2})-\mathcal{F}(0)}{2\sinh\frac{t}{2}}\dtt$
and $\int_{0+}^b\frac{\mathcal{F}(t)-\mathcal{F}(0)}{2\sinh\frac{t}{2}}\dtt$ are finite
due to the estimate 
$\left|{\mathcal{F}(2\sinh\frac{t}{2})-\mathcal{F}(0)}\right|+\left|{\mathcal{F}(t)-\mathcal{F}(0)}\right|\lesssim{t}^{\tau-1}\sinh\frac{t}{2}$ for $t>0$ and $0<\tau\leq1$, 
which can be checked via the elementary inequality $\sinh\frac{t}{2}\geq\frac{t}{2}$ for $t\geq0$. 
\end{remark}

We will give the proof of Theorem~\ref{mainthm-1} in Section~\ref{sec-concentration}.
The following example describing the boundary concentration phenomena of $\eps({u}')^2$ and $\eps^{-1}u$
 is a direct result of Theorem~\ref{mainthm-1}.

\begin{example}\label{ex212-2}
Both $\eps({u}')^2$ and $\eps^{-1}u$ have boundary concentration phenomena
in the following senses:
\begin{align*}
\eps(u')^2&{\rightharpoonup4\left(\cosh\frac{b}{2}-1\right)\delta_R},\\
\frac{u}{\eps}&\rightharpoonup\left(\int_0^b\frac{t}{2\sinh\frac{t}{2}}\dtt\right)\delta_R,
\end{align*}
weakly in $\mathrm{C}([0,R];\mathbb{R})$ as $\eps\downarrow0$.
\end{example}


\section{The Dirichlet-to-Neumann approach}\label{sec-dtn}
\noindent

 Let $u\in\mathrm{C}^{\infty}((0,R))\cap\mathrm{C}^1([0,R])$ be the unique classical solution of (\ref{eq2})--(\ref{bd2}) (cf. Proposition~\ref{corapp}). Since $a_0$, $\gamma$ and $\pmb{\mathtt{C}}(u)$ are positive, and $\sinh{u}$ is increasing to $u$,
the standard maximum principle immediately implies
\begin{align}\label{max-u}
0\leq{u}(r)\leq{a}_0,\quad\forall{r}\in(0,R].
\end{align} 
In this section, we shall establish a Dirichlet-to-Neumann map at the boundary point $r=R$,
\begin{align}\label{DtoN-map-id0}
\Lambda_{\eps}(u(R))=u'(R),
\end{align}
 in an asymptotic framework involving the \textcolor{red}{the diameter $2R$ of the original domain~$\Omega=B_R$} (and also, the curvature $R^{-1}$) as $0<\eps\ll1$,
which plays a crucial role in the asymptotics of the nonlocal coefficient $\pmb{\mathtt{C}}(u)$ and
 the proof of Theorem~\ref{mainthm-1}. The asymptotics of $\Lambda_{\eps}(u(R))$ 
is depicted as follows.
\begin{theorem}\label{DtoN-map}
Under the same hypotheses as in Proposition~\ref{prop-ceps},
we assume
\begin{align}\label{assume-u34}
\liminf_{\eps\downarrow0}u(R)>0.
\end{align}
Then, as $0<\eps\ll1$ we have
\begin{align}\label{DtoN-map-id1}
\langg\Lambda_{\eps}(u(R))\rangg=&\,\frac{2}{\eps}\langg\sinh\frac{u(R)}{2}\rangg-\frac{2}{R}\tanh\frac{\langg{u}(R)\ranggg}{4}\left(N\cosh^2\frac{\langg{u}(R)\ranggg}{2}-1\right)
\end{align}
and 
\begin{align}\label{0304-map}
\left|{u}'(R)-\langg\Lambda_{\eps}(u(R))\rangg\right|\lesssim\sqrt{\eps}.
\end{align}
\end{theorem}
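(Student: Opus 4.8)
The plan is to derive an ODE identity at the boundary that expresses $u'(R)$ in terms of $u(R)$ up to an $O(\sqrt{\eps})$ error, and then to extract its first two terms in $\eps$. First I would return to the integro-differential relation~(\ref{cr-in3}), evaluated at $t=R$, which reads
\begin{align*}
\frac{\eps^2}{2}u'^2(R)+(N-1)\eps^2\int_{R/2}^R\frac{1}{r}u'^2(r)\drr=\pmb{\mathtt{C}}(u)\cosh{u}(R)+\mathtt{K}_{\eps}.
\end{align*}
Since $u$ and $u'$ decay exponentially into the interior (as noted after~(\ref{cr-in3})), evaluating the same identity at $t=R/2$ and subtracting shows $\mathtt{K}_{\eps}=-\pmb{\mathtt{C}}(u)+o_\eps(1)$ with exponentially small corrections; more precisely one keeps track of the error as $O(\eps)$ or better. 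Thus
\begin{align*}
\frac{\eps^2}{2}u'^2(R)=\pmb{\mathtt{C}}(u)\big(\cosh u(R)-1\big)-(N-1)\eps^2\int_{R/2}^R\frac{1}{r}u'^2(r)\drr+\text{(exp.\ small)},
\end{align*}
so that $u'(R)=\frac{1}{\eps}\sqrt{2\pmb{\mathtt{C}}(u)}\sqrt{\cosh u(R)-1}\cdot\big(1-\eps\,\Xi_\eps\big)^{1/2}$ where $\Xi_\eps$ collects the curvature correction term. Using $\sqrt{2(\cosh s-1)}=2\sinh\frac{s}{2}$ and $\pmb{\mathtt{C}}(u)=1+o_\eps(1)$ recovers the leading term $\frac{2}{\eps}\sinh\frac{u(R)}{2}$.

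The heart of the argument is identifying the second term, which requires the precise leading order of the curvature integral $(N-1)\eps^2\int_{R/2}^R\frac{1}{r}u'^2(r)\drr$. Here I would invoke the Dirac-delta behavior of $\eps u'^2$ concentrated at $r=R$ (the content of Lemma~\ref{dirac-lem}): for a continuous weight $g$, $\int_0^R g(r)\,\eps u'^2(r)\drr\to g(R)\cdot 4(\cosh\frac{b}{2}-1)$ up to $o_\eps(1)$. Taking $g(r)=\frac{N-1}{r}$ gives leading order $\frac{N-1}{R}\cdot\eps\cdot 4(\cosh\frac{b}{2}-1)$; but since we want the coefficient multiplying the layer profile rather than its boundary value, I would instead use the refined version of this concentration estimate that tracks $u'^2$ as a function of the local variable, yielding $\eps^2\int_{R/2}^R\frac{1}{r}u'^2\drr$ with leading term involving $\cosh^2\frac{u(R)}{2}$ through the first integral of the ODE. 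Feeding this into the expansion of the square root, expanding $(1-\eps\Xi_\eps)^{1/2}=1-\frac{\eps}{2}\Xi_\eps+O(\eps^2)$, and combining with $\sinh\frac{u(R)}{2}$ and the elementary identity $\frac{\sinh\frac{s}{2}}{2(\cosh\frac s2-1)}\cdot(\cosh s-1)=\cosh^2\frac s4\tanh\frac s2/\ldots$ — more concretely, using $2\sinh\frac s2=4\sinh\frac s4\cosh\frac s4$ and $\cosh^2\frac s2-1=\sinh^2\frac s2$ — should collapse the correction into the stated form $\frac{2}{R}\tanh\frac{u(R)}{4}\big(N\cosh^2\frac{u(R)}{2}-1\big)$ at leading order in $u(R)$, which is why only $\langg u(R)\ranggg$ (the leading term of $u(R)$) appears in that piece.

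For the error bound~(\ref{0304-map}) I would track every approximation quantitatively: the replacement of $\mathtt{K}_\eps$ by $-\pmb{\mathtt{C}}(u)$ costs exponentially small terms; the replacement of the curvature integral by its leading order costs $O(\eps)$ inside the bracket, hence $O(1)$ after division by $\eps$ — wait, that is too crude, so the point is that this $O(\eps)$ relative error sits inside the square root multiplied by $\frac{1}{\eps}$, giving an absolute contribution of order $O(1)$ to $u'(R)$ which is precisely the second term we are computing; the genuinely discarded piece is the \emph{next} correction to the Dirac-delta estimate, which by the rate in Lemma~\ref{dirac-lem} (or the $\eps^{3/2}$-type estimates feeding Proposition~\ref{prop-ceps}) is $O(\sqrt\eps)$ relative, hence $O(\sqrt\eps)$ absolute after the $\frac1\eps$ scaling is absorbed. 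I expect the main obstacle to be exactly this bookkeeping: making the Dirac-delta concentration statement quantitative enough (with an $O(\sqrt\eps)$ remainder) to justify the error term, and ensuring the $\mathtt{p}$-uniform or $u(R)$-uniform control holds on the relevant range; the algebraic simplification into the closed form involving $N\cosh^2\frac{u(R)}{2}-1$ is then a routine, if slightly lengthy, manipulation of hyperbolic identities.
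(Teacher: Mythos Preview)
Your overall architecture matches the paper's proof: start from the integro-differential identity~(\ref{cr-in3}) at $t=R$, eliminate $\mathtt{K}_\eps$ via the interior decay, feed in the Dirac-delta concentration estimate for $\eps u'^2$ (Lemma~\ref{dirac-lem}) applied with weight $g(r)=1/r$, and take a square root. The $O(\sqrt\eps)$ error indeed comes from the $\eps^{3/2}$ remainder in the refined concentration bound, divided by $\eps$ after the square root.

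There is, however, one concrete gap that explains the confusion you flag in the middle paragraph. You use $\pmb{\mathtt{C}}(u)=1+o_\eps(1)$ to get the leading term and then try to extract the entire second-order correction $-\frac{2}{R}\tanh\frac{u(R)}{4}\big(N\cosh^2\frac{u(R)}{2}-1\big)$ from the curvature integral alone. That cannot work: the Dirac-delta lemma with $g=1/r$ gives
\[
(N-1)\,\eps^2\!\int_{R/2}^R\frac{u'^2}{r}\,\drr=\frac{8(N-1)}{R}\sinh^2\frac{u(R)}{4}\,\eps+O(\eps^{3/2}),
\]
which after the square root contributes only the ``$N-1$'' piece. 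The missing ``$N\cosh^2\frac{u(R)}{2}$'' piece comes from the $O(\eps)$ correction to the nonlocal coefficient itself, namely estimate~(\ref{ce-12-3}):
\[
\pmb{\mathtt{C}}(u)=1-\frac{4N}{R}\sinh^2\frac{u(R)}{4}\,\eps+O(\eps^{3/2}),
\]
which multiplies $\cosh u(R)-1=2\sinh^2\frac{u(R)}{2}$. The two $O(\eps)$ contributions then combine through the elementary identity
\[
N\sinh^2\tfrac{u(R)}{2}+(N-1)=N\cosh^2\tfrac{u(R)}{2}-1,
\]
and the final hyperbolic simplification $\dfrac{2\sinh^2\frac{s}{4}}{\sinh\frac{s}{2}}=\tanh\dfrac{s}{4}$ gives the stated form. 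So your ``refined version tracking $u'^2$ as a function of the local variable'' is unnecessary; what you actually need is to upgrade your use of $\pmb{\mathtt{C}}(u)$ from leading order to the two-term expansion~(\ref{ce-12-3}), which is itself a consequence of the same Lemma~\ref{dirac-lem} with a different weight.
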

Applying the Dirichlet-to-Neumann approach~\eqref{DtoN-map-id1}--\eqref{DtoN-map-id1}, we can establish the refined asymptotics for the thin annular layer of $u$ with respect to $\eps\downarrow0$.

\subsection{Proof of Theorem~\ref{DtoN-map}}\label{0304-newpfsec}
\noindent

To prove Theorem~\ref{DtoN-map}, we need some lemmas.
Firstly, we establish crucial interior estimates as follows.
\begin{lemma}[Interior estimates]\label{asy-radial-lem}
Assume $a_0>0$. For $\eps>0$ and $\gamma>0$, let $u$ be the unique classical solution of (\ref{eq2})--(\ref{bd2}). Then 
\begin{itemize}
\item[(i)]\,\,For $\eps>0$ fixed, $u(r)$ and $u'(r)$ are strictly positive and $u''(r)\geq0$ in $(0,R]$.
\item[(ii)]\,\,As $0<\eps\ll1$, there hold
\begin{align}\label{gradesti}
\max\left\{|{u}(r)|,\,{\gamma\eps}|{u}'(r)|\right\}\leq2a_0{e}^{-\frac{1}{8\eps}({\cosh{a}_0})^{-1/2}(R-r)},\quad{r}\in[0,R].
\end{align}
\end{itemize}
\end{lemma}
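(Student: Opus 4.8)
\textbf{Proof proposal for Lemma~\ref{asy-radial-lem}.}

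The plan is to prove part (i) first, since the sign information and convexity feed directly into the exponential decay bound in part~(ii). For part~(i), I would start from the observation that $\pmb{\mathtt{C}}(u)>0$ and, by the maximum principle bound~\eqref{max-u}, $0\le u(r)\le a_0$; since $a_0>0$ and $u$ cannot be identically zero (that would violate the boundary condition~\eqref{bd2} with $a_0>0$), strict positivity of $u$ on $(0,R]$ follows from the strong maximum principle and Hopf's lemma applied to the equation $\eps^2\Delta u=\pmb{\mathtt{C}}(u)\sinh u\ge 0$ (so $u$ is superharmonic-like, its minimum is on the boundary, and an interior zero would be an interior minimum, forcing $u\equiv0$). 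To see $u'(r)>0$, rewrite~\eqref{eq2} as $\bigl(r^{N-1}u'(r)\bigr)'=\eps^{-2}r^{N-1}\pmb{\mathtt{C}}(u)\sinh u(r)\ge 0$, so $r^{N-1}u'(r)$ is nondecreasing; since it vanishes at $r=0$ (by $u'(0)=0$), it is nonnegative, hence $u'(r)\ge 0$, and in fact $>0$ for $r\in(0,R]$ because the integrand is strictly positive wherever $u>0$. Finally, from $\eps^2 u''=\pmb{\mathtt{C}}(u)\sinh u-\eps^2\tfrac{N-1}{r}u'$ one does \emph{not} immediately get $u''\ge0$; instead I would differentiate $\bigl(r^{N-1}u'\bigr)'\ge0$ more carefully, or argue that $u''(r)=\eps^{-2}\pmb{\mathtt{C}}(u)\sinh u-\tfrac{N-1}{r}u'\ge0$ by comparing the two nonnegative-sign competing terms: near $r=0$ expand $u'(r)=\tfrac{r}{N}\cdot\eps^{-2}\pmb{\mathtt{C}}(u)\sinh u(0)+o(r)$ so $\tfrac{N-1}{r}u'(r)\to\tfrac{N-1}{N}\eps^{-2}\pmb{\mathtt{C}}(u)\sinh u(0)<\eps^{-2}\pmb{\mathtt{C}}(u)\sinh u(0)$, giving $u''(0)>0$; then propagate the inequality by a continuity/connectedness argument on $\{u''>0\}$ using the ODE.

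For part~(ii), the strategy is a barrier/comparison argument on a one-dimensional reduction. Since $u''\ge0$ and $u'\ge0$ (from part (i)), the radial Laplacian satisfies $u''(r)+\tfrac{N-1}{r}u'(r)\le u''(r)+\tfrac{N-1}{r}u'(r)$... more usefully, I would combine~\eqref{eq2} with the integro-differential identity~\eqref{cr-in3}: dropping the manifestly nonnegative term $(N-1)\eps^2\int_{R/2}^t r^{-1}u'^2\,\drr$ on the left when $t\le R/2$ and keeping sign control otherwise, one obtains a differential inequality of the form $\tfrac{\eps^2}{2}u'^2(t)\le \pmb{\mathtt{C}}(u)\cosh u(t)+\mathtt K_\eps$ with the right side controlled. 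Using $\pmb{\mathtt{C}}(u)\to1$, $\mathtt K_\eps\to-1$ and $0\le u\le a_0$, this yields $\eps|u'(t)|\le C_0\sqrt{\cosh u(t)-1+o_\eps(1)}\le C_1\sqrt{u(t)}\cdot(\cosh a_0)^{1/2}$ roughly, i.e. a Riccati-type bound $\eps u'(r)\ge c\, u(r)$ — wait, the inequality goes the wrong way for a lower bound, so instead I would directly build an exponential supersolution: set $\bar u(r)=2a_0 e^{-\beta(R-r)/\eps}$ with $\beta=\tfrac18(\cosh a_0)^{-1/2}$ and verify $\eps^2\bigl(\bar u''+\tfrac{N-1}{r}\bar u'\bigr)=\bar u\,\bigl(\beta^2+\tfrac{N-1}{r}\beta\eps\bigr)\le \pmb{\mathtt{C}}(u)\sinh\bar u$ fails in general, so the cleaner route is to compare $u$ against the solution of the constant-coefficient problem $\eps^2 w''=\tfrac12 w$ (using $\sinh u\ge \tfrac12 u$ is false for small $u$... actually $\sinh u\ge u$, and $\pmb{\mathtt C}(u)\cosh u\ge$ something like $\tfrac12$ for $\eps$ small) — I would pin down the constant $\tfrac18(\cosh a_0)^{-1/2}$ by taking $w(r)=2a_0 e^{-(R-r)/(8\eps\sqrt{\cosh a_0})}$ and checking it is a supersolution of the \emph{linearized} equation $\eps^2 \Delta u \le \tfrac{1}{2\cosh a_0}u$ that $u$ satisfies once we know $\pmb{\mathtt{C}}(u)\sinh u\ge \tfrac{\sinh u}{u\cosh a_0}\cdot u\cdot(1+o_\eps(1))\ge \tfrac{1}{2\cosh a_0}u$ for $\eps$ small (using $\pmb{\mathtt C}(u)\to1$, $u\le a_0$, $\sinh u\ge u$... no, need lower bound on $\sinh u/u$, which is $\ge1$, good). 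Then the maximum principle on the annulus/ball, together with $u\le a_0\le \bar u$ on $r=R$ and $u,\bar u$ both regular at $0$, gives $u(r)\le \bar u(r)$ pointwise; the bound on $\eps\gamma|u'|$ then follows by differentiating this envelope estimate (interior gradient estimate) or by a similar barrier for $u'$.

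The main obstacle I anticipate is handling the nonlocal coefficient $\pmb{\mathtt{C}}(u)$ cleanly inside the comparison argument: $\pmb{\mathtt{C}}(u)$ depends on the unknown $u$ through a global integral, so one cannot treat~\eqref{eq2} as an honest elliptic equation with a fixed coefficient before one has \emph{some} a priori control on $\pmb{\mathtt{C}}(u)$. The resolution is the one the paper already flags: using~\eqref{bd2} and~\eqref{cr-in3} one first extracts the crude limits $\pmb{\mathtt{C}}(u)\to1$ and $\mathtt{K}_\eps\to-1$ (an argument of the type in \cite{lhl2011,lhl2016}), which suffices to guarantee $\pmb{\mathtt{C}}(u)\ge \tfrac12$ say for $0<\eps\ll1$; with that lower bound in hand the coefficient is effectively frozen from below and the barrier comparison goes through with the stated explicit constant. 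A secondary technical point is the mild singularity of the radial Laplacian at $r=0$, which is handled routinely since all functions involved are $C^1$ up to $r=0$ with vanishing derivative there, so the comparison principle on $B_R$ applies without boundary terms at the origin.
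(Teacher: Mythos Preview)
Your proposal for part~(i) is mostly fine---the monotonicity argument $(r^{N-1}u')'\ge0$ for $u'>0$ is actually cleaner than the paper's contradiction via unique continuation. However, your sketch for $u''\ge0$ (``expand near $r=0$ and propagate by continuity/connectedness'') is not a proof; the paper instead differentiates the equation to obtain $\eps^2\bigl(r^{N-1}u''\bigr)'>0$ on $(0,R)$, whence $r^{N-1}u''(r)\ge\liminf_{s\downarrow0}s^{N-1}u''(s)\ge0$ by a direct limit computation using $u'(0)=0$ and $N>1$. This is a one-line fix you should adopt.

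The real gap is in part~(ii). Your argument hinges on $\pmb{\mathtt{C}}(u)\to1$ (``using~\eqref{bd2} and~\eqref{cr-in3} one first extracts the crude limits\dots''), but in the paper's logical order that limit is established in Lemma~\ref{cruc-in-thm}, which \emph{uses} the very estimate~\eqref{gradesti} you are trying to prove. So your plan is circular. The correct a~priori input is much cheaper: from $0\le u\le a_0$ one has $\cosh u\le\cosh a_0$, hence directly
\[
\pmb{\mathtt{C}}(u)=\Bigl(\fint_{B_R}\cosh u\Bigr)^{-1}\ge(\cosh a_0)^{-1},
\]
with no asymptotics needed. With this bound in hand, the paper does \emph{not} build a single exponential supersolution on the whole ball (your barrier $\bar u$ would have $\bar u'(0)\neq0$, so the comparison at the origin is awkward). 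Instead it linearizes by squaring: multiplying the differential inequality $\eps^2\Delta u\ge(\cosh a_0)^{-1}u$ by $u$ and completing the square yields, for $r\in[R/2,R]$ and $\eps$ small,
\[
\eps^2(u^2)''\ge(\cosh a_0)^{-1}u^2,
\]
a genuine \emph{linear} one-dimensional inequality for which the two-sided exponential barrier $a_0\bigl(e^{-\frac{(\cosh a_0)^{-1/2}}{2\eps}(r-R/2)}+e^{-\frac{(\cosh a_0)^{-1/2}}{2\eps}(R-r)}\bigr)$ works on $[R/2,R]$; a short case split at $r=3R/4$ then produces the global bound with the constant $\tfrac18$. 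The same trick applied to $u'^2$ (after differentiating the equation once) gives the gradient estimate, with the boundary value $u'(R)\le a_0/(\gamma\eps)$ supplied by~\eqref{bd2}. Your vague ``interior gradient estimate or similar barrier for $u'$'' should be replaced by this concrete argument.
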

\begin{proof}
Note that (\ref{max-u}) implies
\begin{align}\label{0303-2018}
\pmb{\mathtt{C}}(u)\geq\left(\cosh{a}_0\right)^{-1}.
\end{align}
Thus by (\ref{eq2}) and (\ref{0303-2018}), we have
\begin{align}\label{eq-newu}
\eps^2\left(u''+\frac{N-1}{r}u'\right)\geq\left(\cosh{a}_0\right)^{-1}u.
\end{align}
Hence, by Proposition 2.1. of \cite{lhl2016}, (\ref{eq2}) is a second order elliptic equation and the solution $u$ satisfies the unique continuation property. 
Now we give the proof of (i). 
Suppose by contradiction that there exists $r_0\in(0,R)$ such that $u'(r_0)=0$. Then,
multiplying (\ref{eq-newu}) by $r^{N-1}$, integrating the expression over $(0,r_0)$
and using $u'(0)=u'(r_0)=0$ immediately give
\begin{align}\notag
\int_0^{r_0}u(r)\drr=0.
\end{align}
Along with (\ref{max-u}) implies $u\equiv0$ in $[0,r_0]$, and then
the unique continuation property shows that $u$ is trivial in $[0,R]$, a contradiction.
Consequently, $u'>0$ in $(0,R]$. Similarly, by (\ref{max-u}) and unique continuation property, we obtain $u>0$ in $(0,R]$.

Differentiating (\ref{eq2}) to $r$ and using (\ref{0303-2018}) and $u$, $u'>0$ in $(0,R)$, we have
\begin{align}\label{u-pulan}
\eps^2\left(u'''+\frac{N-1}{r}u''\right)=&\left[\frac{(N-1)\eps^2}{r^2}+\pmb{\mathtt{C}}(u)\cosh{u}\right]u'
\geq\left(\cosh{a}_0\right)^{-1}u'>0\,\,\mathrm{in}\,\,(0,R).
\end{align}
For $\eps>0$ fixed,
multiplying (\ref{u-pulan}) by $r^{N-1}$, one arrives at 
$\eps^2(r^{N-1}u''(r))'>0$. Hence, for $r\in(0,R)$ we have
\begin{align}\label{sec-der-est}
r^{N-1}u''(r)\geq\,\liminf_{s\downarrow0+}s^{N-1}u''(s)
=\,\liminf_{s\downarrow0+}\left(\frac{\pmb{\mathtt{C}}_{\eps}(u(s))}{\eps^2}s^{N-1}\sinh{u(s)}-(N-1)s^{N-2}u'(s)\right)\geq0.
\end{align}
Here we have used the facts $\sinh{u(s)}\geq0$, $u'(0)=0$ and $N>1$ to verify (\ref{sec-der-est}).
This implies $u''(r)\geq0$ for $r\in(0,R)$, and completes the proof of (i).

Now we want to prove (ii). 
Multiplying (\ref{eq-newu})
by $u$, one may check that
\begin{align}\notag
\eps^2(u^2)''\geq2\left(\eps^2u'^2-\frac{(N-1)\eps^2}{r}uu'+(\cosh{a}_0)^{-1}u^2\right).
\end{align}
In particular, for $r\in[\frac{R}{2},R]$, by (i)
one has $\eps^2u'^2-\frac{(N-1)\eps^2}{r}uu'\geq\eps^2u'^2-\frac{2(N-1)\eps^2}{R}uu'\geq-\frac{(N-1)^2\eps^2}{R^2}u^2$.
This concludes
\begin{align}\label{uto2-1}
\eps^2(u^2)''\geq&\,2\left[-\frac{(N-1)^2\eps^2}{R^2}+(\cosh{a}_0)^{-1}\right]u^2
\geq\,(\cosh{a}_0)^{-1}u^2,\quad\quad\quad\mathrm{as}\,\,0<\eps<\eps^*(R),
\end{align}
where $\eps^*(R)={R}{(N-1)^{-1}(2\cosh{a}_0)^{-1/2}}$. Applying elliptic comparison arguments to (\ref{uto2-1}) and using (\ref{max-u}), we obtain
\begin{align}
0\leq{u}(r)\leq{a}_0\left(e^{-\frac{(\cosh{a_0})^{-1/2}}{2\eps}(r-\frac{R}{2})}+e^{-\frac{(\cosh{a_0})^{-1/2}}{2\eps}(R-r)}\right),\quad\forall\,r\in[\frac{R}{2},R],
\end{align}\label{uto2-2}
as $0<\eps<\eps^*(R)$. As a consequence, 
\begin{itemize}
 \item[\textbf{(a1).}]\,\,When $r\geq\frac{3}{4}R$, i.e., $R-r\leq{r}-\frac{R}{2}$, we have
\begin{equation*}
0\leq{u}(r)\leq2a_0e^{-\frac{(\cosh{a_0})^{-1/2}}{2\eps}(R-r)}.
\end{equation*}
\item[\textbf{(a2).}]\,\,When $r\in[0,\frac{3}{4}R]$, by $u'\geq0$ we have
\begin{equation*}
0\leq{u}(r)\leq{u}(\frac{3R}{4})\leq2a_0e^{-\frac{(\cosh{a_0})^{-1/2}}{8\eps}R}\leq2a_0e^{-\frac{(\cosh{a_0})^{-1/2}}{8\eps}(R-r)}.
\end{equation*}
\end{itemize}
One can conclude from (a1) and (a2) that
\begin{align}\label{uto2-3}
0\leq{u}(r)\leq2a_0e^{-\frac{(\cosh{a_0})^{-1/2}}{8\eps}(R-r)},\quad\forall\,r\in[0,R],
\end{align}
as $0<\eps<\eps^*(R)$.

Now we deal with the estimate of $u'$. 
Multiplying (\ref{u-pulan})
by $u'$ and using $u'\geq0$, one may check that, for $r\in[\frac{R}{2},R]$,
\begin{align}\label{uto2-1-plun}
\eps^2(u'^2)''\geq(\cosh{a}_0)^{-1}u'^2,\quad\mathrm{as}\,\,0<\eps<\eps^*(R).
\end{align}
Hence, following similar argument of (\ref{uto2-1})--(\ref{uto2-3}), we have
\begin{align}\label{uto2-3-plun}
0\leq{u}'(r)\leq2u'(R)e^{-\frac{(\cosh{a_0})^{-1/2}}{8\eps}(R-r)},\quad\forall\,r\in[0,R],
\end{align}
as $0<\eps<\eps^*(R)$. Moreover, by the boundary condition~(\ref{bd2}) and (\ref{max-u})
we have $u'(R)\leq\frac{a_0}{\gamma\eps}$. Along with (\ref{uto2-3-plun}) yields
\begin{align}\label{uto2-4-plun}
0\leq{u}'(r)\leq\frac{2a_0}{\gamma\eps}e^{-\frac{(\cosh{a_0})^{-1/2}}{8\eps}(R-r)},\quad\forall\,r\in[0,R],
\end{align}
as $0<\eps<\eps^*(R)$.
Therefore, by (\ref{uto2-3}) and (\ref{uto2-4-plun}) we get (\ref{gradesti}).

 Therefore,
we complete the proof of Lemma~\ref{asy-radial-lem}.
\end{proof}

\begin{lemma}\label{cruc-in-thm}
Under the same hypotheses as in Proposition~\ref{prop-ceps}, as $0<\eps\ll1$ we have
\begin{align}
\left|\frac{\eps^2}{2}u'^2(t)+(N-1)\eps^2\int_{\frac{R}{2}}^t\frac{1}{r}u'^2(r)\drr-\pmb{\mathtt{C}}(u)\left(\cosh{u(t)}-1\right)\right|&\lesssim{e}^{-\frac{R}{16\eps}(\cosh{a}_0)^{-1/2}},\label{cr-in1}\\
\left|\eps{u}'(t)-2\sqrt{\pmb{\mathtt{C}}(u)}\sinh\frac{u(t)}{2}\right|&\lesssim\sqrt{\eps},\quad\,\,\,t\in(0,R],\label{cr-in2-add}
\end{align}
and
\begin{align}\label{cr-in2}
\left|\pmb{\mathtt{C}}(u)-1+\eps^2\int_{\frac{R}{2}}^R\left(\frac{N-1}{r}-\frac{N-2}{2R^N}r^{N-1}\right)u'^2(r)\drr\right|\lesssim{e}^{-\frac{R}{16\eps}(\cosh{a}_0)^{-1/2}}.
\end{align}
\end{lemma}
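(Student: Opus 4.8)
\textbf{Proof strategy for Lemma~\ref{cruc-in-thm}.}
The plan is to treat the three estimates in sequence, with \eqref{cr-in1} serving as the master identity from which the other two follow. First I would recall from \eqref{cr-in3} that the quantity
$\frac{\eps^2}{2}u'^2(t)+(N-1)\eps^2\int_{R/2}^t\frac{1}{r}u'^2(r)\drr-\pmb{\mathtt{C}}(u)\cosh u(t)$
is constant in $t$ on $[0,R)$, equal to $\mathtt{K}_\eps$. To pin down $\mathtt{K}_\eps$ I would evaluate at $t\to0^+$: by Lemma~\ref{asy-radial-lem}(ii) we have $u(0),\,\eps u'(0)=0$ both bounded by $2a_0 e^{-R(\cosh a_0)^{-1/2}/(8\eps)}$, and moreover $u'(0)=0$ exactly by \eqref{bd2}. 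Thus $\mathtt{K}_\eps = -\pmb{\mathtt{C}}(u)\cosh u(0) + (\text{exponentially small})$, and since $\cosh u(0) = 1 + O(u(0)^2)$ with $\pmb{\mathtt{C}}(u)$ bounded (by \eqref{0303-2018} and $\pmb{\mathtt{C}}(u)\le\cosh a_0$ trivially, or just boundedness), we get $\mathtt{K}_\eps = -\pmb{\mathtt{C}}(u) + O(e^{-R(\cosh a_0)^{-1/2}/(8\eps)})$. Actually the cleanest route integrates \eqref{eq-newu}-type relations on $(0,R/2)$: multiply \eqref{eq2} by $r^{N-1}u'$, integrate over $(0,t)$, and one obtains exactly \eqref{cr-in3} after dividing by $r^{N-1}$; the constant is read off at $t=R/2$ or $t\to 0$. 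The exponential error bound comes from Lemma~\ref{asy-radial-lem}(ii) applied at $r=R/2$, giving $u'^2(R/2), u^2(R/2) \lesssim e^{-R(\cosh a_0)^{-1/2}/(8\eps)}$, hence all boundary contributions near $r=R/2$ (and the tail of the integral $\int_0^{R/2}$) are of that order. Rearranging yields \eqref{cr-in1}.

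For \eqref{cr-in2-add}, I would argue from \eqref{cr-in1} that
$\frac{\eps^2}{2}u'^2(t) = \pmb{\mathtt{C}}(u)(\cosh u(t)-1) - (N-1)\eps^2\int_{R/2}^t\frac1r u'^2\,\drr + O(e^{-R(\cosh a_0)^{-1/2}/(16\eps)})$.
The subtracted integral term is nonnegative for $t\ge R/2$ and nonpositive for $t\le R/2$, but in either case its absolute value is bounded: using $\frac1r\le\frac2R$ on $[R/2,R]$ and the pointwise decay \eqref{uto2-4-plun}, $\eps^2\int_{R/2}^R\frac1r u'^2\drr \lesssim \eps^2\cdot\frac{1}{\gamma^2\eps^2}\int_{R/2}^R e^{-(\cosh a_0)^{-1/2}(R-r)/(4\eps)}\drr \lesssim \eps$. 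So $\frac{\eps^2}{2}u'^2(t) = \pmb{\mathtt{C}}(u)(\cosh u(t)-1) + O(\eps)$, uniformly in $t$. Taking square roots — using $\cosh u - 1 = 2\sinh^2(u/2)$ so that $\pmb{\mathtt{C}}(u)(\cosh u(t)-1) = 2\pmb{\mathtt{C}}(u)\sinh^2(u(t)/2)$, together with $u'\ge 0$, $\pmb{\mathtt{C}}(u)>0$ bounded below, and the elementary inequality $|\sqrt{A}-\sqrt{B}|\le\sqrt{|A-B|}$ — gives $|\eps u'(t) - 2\sqrt{\pmb{\mathtt{C}}(u)}\sinh(u(t)/2)| \lesssim \sqrt{\eps}$, which is \eqref{cr-in2-add}.

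For \eqref{cr-in2}, I would evaluate \eqref{cr-in1} at $t=R$: $\frac{\eps^2}{2}u'^2(R) + (N-1)\eps^2\int_{R/2}^R\frac1r u'^2\drr = \pmb{\mathtt{C}}(u)(\cosh u(R)-1) + O(e^{-\cdots/\eps})$. The goal is to re-express the first term $\frac{\eps^2}{2}u'^2(R)$ and relate $\pmb{\mathtt{C}}(u)\cosh u(R)$ to $1$. Here I expect the real work: I would integrate the original equation \eqref{eq2} in the form $\eps^2(r^{N-1}u')' = \pmb{\mathtt{C}}(u)r^{N-1}\sinh u$ over $(0,R)$ to get $\eps^2 R^{N-1}u'(R) = \pmb{\mathtt{C}}(u)\int_0^R r^{N-1}\sinh u\,\drr$, and combine with the definition \eqref{cons2} of $\pmb{\mathtt{C}}(u)$, which reads $\pmb{\mathtt{C}}(u)\cdot\frac{N}{R^N}\int_0^R r^{N-1}\cosh u\,\drr = 1$. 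Writing $\int r^{N-1}\cosh u = \int r^{N-1}(\cosh u - 1) + \frac{R^N}{N}$ and using $\cosh u - 1 = 2\sinh^2(u/2)$ together with \eqref{cr-in2-add} to convert $\sinh^2(u/2) \approx \eps^2 u'^2/(4\pmb{\mathtt{C}}(u))$, one gets $\pmb{\mathtt{C}}(u) - 1 = -\pmb{\mathtt{C}}(u)\cdot\frac{N}{R^N}\int_0^R r^{N-1}(\cosh u -1)\drr$, and then the substitution plus an integration by parts (to move one factor from $u'^2$, matching the $\frac{N-2}{2R^N}r^{N-1}$ weight against the $\frac{N-1}{r}$ weight) produces the stated combination $\eps^2\int_{R/2}^R\big(\frac{N-1}{r} - \frac{N-2}{2R^N}r^{N-1}\big)u'^2\drr$; the cutoff at $R/2$ is justified because the contribution of $\int_0^{R/2}$ is exponentially small by Lemma~\ref{asy-radial-lem}(ii). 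The bookkeeping of which error terms in \eqref{cr-in2-add} (size $\sqrt\eps$ pointwise) survive after being multiplied by $\eps^2 u'^2$ — of total mass $O(\eps)$ — and integrated is the delicate point; I would check that all such cross terms are $o(e^{-R(\cosh a_0)^{-1/2}/(16\eps)})$ or can be absorbed, which requires using the exponential decay \eqref{uto2-4-plun} rather than just the $L^1$ bound. The main obstacle is therefore this last error-tracking in \eqref{cr-in2}: getting an \emph{exponentially} small remainder (not merely $O(\sqrt\eps)$) forces one to use the sharp pointwise decay of $u$ and $u'$ away from $r=R$, and to arrange the integration by parts so that the only surviving terms are those concentrated in an $O(\eps)$-neighborhood of the boundary, which are handled exactly.
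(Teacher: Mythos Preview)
Your treatment of \eqref{cr-in1} and \eqref{cr-in2-add} is correct and matches the paper: evaluate the conserved quantity \eqref{cr-in3} at $t=R/2$ to pin down $\mathtt{K}_\eps+\pmb{\mathtt{C}}(u)$ as exponentially small via Lemma~\ref{asy-radial-lem}(ii), and then take square roots using $|A^2-B^2|\ge (A-B)^2$ for $A,B\ge0$ after bounding the integral term by $O(\eps)$.

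The gap is in your argument for \eqref{cr-in2}. You correctly write the exact identity
\[
\pmb{\mathtt{C}}(u)-1=-\frac{N}{R^N}\int_0^R r^{N-1}\,\pmb{\mathtt{C}}(u)\bigl(\cosh u-1\bigr)\,\drr,
\]
but you then propose to replace $\pmb{\mathtt{C}}(u)(\cosh u-1)=2\pmb{\mathtt{C}}(u)\sinh^2(u/2)$ by $\tfrac{\eps^2}{2}u'^2$ using \eqref{cr-in2-add}. That substitution carries an unavoidable pointwise error of order $\sqrt{\eps}$ near $r=R$, which after integration against the (boundary-concentrated) weight $r^{N-1}$ leaves a remainder of order $\eps^{3/2}$, not the exponentially small error the lemma asserts. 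Your closing paragraph acknowledges this as ``the main obstacle'' and appeals to sharper pointwise decay, but the decay is of no help in the boundary layer where the $\sqrt{\eps}$ discrepancy actually lives. Moreover, the bare substitution yields the single weight $\tfrac{N}{2R^N}r^{N-1}$, not the combination $\tfrac{N-1}{r}-\tfrac{N-2}{2R^N}r^{N-1}$; these agree at $r=R$ but differ by a term of size $(R-r)$, so even their integrals against $\eps^2 u'^2$ differ by $O(\eps^2)$, again not exponentially small.

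The fix is to use \eqref{cr-in1} itself (not its weakened consequence \eqref{cr-in2-add}) inside the integral: it gives the \emph{exact} relation
\[
\pmb{\mathtt{C}}(u)\bigl(\cosh u(t)-1\bigr)=\frac{\eps^2}{2}u'^2(t)+(N-1)\eps^2\int_{R/2}^t\frac{u'^2(r)}{r}\,\drr+O\bigl(e^{-R(\cosh a_0)^{-1/2}/(16\eps)}\bigr),
\]
with exponential error. Substituting this into the identity above and swapping the order of integration in the resulting double integral (a Fubini step, not the single-integral integration by parts you allude to) produces precisely the weight $\tfrac{N-1}{r}-\tfrac{N-2}{2R^N}r^{N-1}$ on $[R/2,R]$, with the $[0,R/2]$ contribution exponentially small by \eqref{gradesti}. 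This is exactly the paper's route: it multiplies \eqref{cr-in3} by $t^{N-1}$, integrates over $(0,R)$, uses that $\int_0^R t^{N-1}\pmb{\mathtt{C}}(u)\cosh u\,\dtt=R^N/N$ by the very definition of $\pmb{\mathtt{C}}(u)$, and then performs the Fubini rearrangement; no error-tracking of the kind you describe is needed.
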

\begin{proof}
Putting $t=\frac{R}{2}$ into (\ref{cr-in3}) and using (\ref{gradesti}) and (\ref{0303-2018}), one may check that
\begin{align}\label{cr-in5}
|\pmb{\mathtt{C}}(u)+\mathtt{K}_{\eps}|\leq&\pmb{\mathtt{C}}(u)\left|\cosh{u}(\frac{R}{2})-1\right|
+\frac{\eps^2}{2}u'^2(\frac{R}{2})
\lesssim\,u(\frac{R}{2})+{\eps^2}u'^2(\frac{R}{2})\lesssim{e}^{-\frac{R}{8\eps}(\cosh{a}_0)^{-1/2}},\,\,\mathrm{as}\,\,0<\eps\ll1
\end{align}
Along with (\ref{cr-in3}) immediately yields (\ref{cr-in1}).

Moreover, by (\ref{gradesti}) and (\ref{cr-in1}), one may check that
\begin{align}\label{cr-in6-add}
\big|{\eps^2}u'^2(t)-&\,2\pmb{\mathtt{C}}(u)\left(\cosh{u(t)}-1\right)\big|
\lesssim\,\eps^2\int_{\frac{R}{2}}^t\frac{1}{r}u'^2(r)\drr+{e}^{-\frac{R}{8\eps}(\cosh{a}_0)^{-1/2}}\lesssim\eps.
\end{align}
On the other hand, since $u\geq0$ and $u'\geq0$, one finds
\begin{align}
 \big|{\eps^2}u'^2(t)-&\,2\pmb{\mathtt{C}}(u)\left(\cosh{u(t)}-1\right)\big|
\geq\left({\eps}u'(t)-\sqrt{2\pmb{\mathtt{C}}(u)}\sinh\frac{u(t)}{2}\right)^2.
\end{align}
Along with (\ref{cr-in6-add}), we get (\ref{cr-in2-add}).

It remains to prove (\ref{cr-in2}). Multiplying (\ref{cr-in3}) by $t^{N-1}$ and integrating the result over $(0,R)$,
we have
\begin{align}\label{cr-in6}
\frac{\eps^2}{2}\int_0^Ru'^2(t)t^{N-1}\dtt+(N-1)\eps^2\int_0^Rt^{N-1}\int_{\frac{R}{2}}^t\frac{1}{r}u'^2(r)\drr\dtt=\frac{R^N}{N}\left(1+\mathtt{K}_{\eps}\right).
\end{align}
On the other hand, using the integration by parts we have
\begin{align}\label{cr-in7}
\frac{\eps^2}{2}\int_0^Ru'^2(t)t^{N-1}\dtt+&(N-1)\eps^2\int_0^Rt^{N-1}\int_{\frac{R}{2}}^t\frac{1}{r}u'^2(r)\drr\dtt\notag\\
&\quad\quad=\frac{\eps^2}{N}\int_{\frac{R}{2}}^R\left((N-1)\frac{R^N}{t}-\frac{N-2}{2}t^{N-1}\right)u'^2(t)\dtt\\
&\quad\quad\,\,\quad\quad-\frac{N-2}{2N}\eps^2\int_0^{\frac{R}{2}}t^{N-1}u'^2(t)\dtt.\notag
\end{align}
 Combining (\ref{cr-in5})--(\ref{cr-in7})
and using the gradient estimate in (\ref{gradesti}), it follows
\begin{align}\label{cr-in8}
&\left|\pmb{\mathtt{C}}(u)-1+\eps^2\int_{\frac{R}{2}}^R\left(\frac{N-1}{t}-\frac{N-2}{2R^N}t^{N-1}\right)u'^2(t)\dtt\right|\notag\\
&\quad\leq|\pmb{\mathtt{C}}(u)+\mathtt{K}_{\eps}|+\frac{N-2}{2R^N}\eps^2\int_0^{\frac{R}{2}}t^{N-1}u'^2(t)\dtt
\lesssim{e}^{-\frac{R}{16\eps}(\cosh{a}_0)^{-1/2}},\,\,\mathrm{as}\,\,0<\eps\ll1.
\end{align}
This proves (\ref{cr-in2}) and completes the proof of Lemma~\ref{cruc-in-thm}.
\end{proof}

\begin{lemma}\label{dirac-lem}
Under the same hypotheses as in Proposition~\ref{prop-ceps}, as $0<\eps\ll1$ we have
\begin{align}\label{dirac-1}
&\left|\eps\int_0^Rg(r)u'^2(r)\drr-4g(R)\left(\cosh\frac{u(R)}{2}-1\right)\right|
\lesssim\max_{[R-\sqrt{\eps},R]}|g(r)-g(R)|+\eps
\stackrel{\mathrm{as}\,\,\eps\downarrow0}{-\!\!\!-\!\!\!-\!\!\!-\!\!\!\rightarrow}0,
\end{align}
where $g\in\mathrm{C}([0,R])$ is a continuous function independent of $\eps$. Moreover, there holds
\begin{align}\label{ce-12-3}
\left|\pmb{\mathtt{C}}(u)-1+\frac{2N}{R}\left(\cosh\frac{u(R)}{2}-1\right)\eps\right|\lesssim\eps^{3/2}.
\end{align}
\end{lemma}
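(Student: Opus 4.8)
The plan is to prove \eqref{dirac-1} first and then deduce \eqref{ce-12-3} by choosing a specific weight~$g$ and combining with \eqref{cr-in2} from Lemma~\ref{cruc-in-thm}. For \eqref{dirac-1}, the heuristic is that $\eps u'^2$ concentrates near $r=R$ like a Dirac mass: by the interior estimate \eqref{gradesti}, $\eps u'^2(r)\lesssim \eps^{-1}e^{-\frac{c}{\eps}(R-r)}$ with $c=\tfrac14(\cosh a_0)^{-1/2}$, so the contribution of $\eps\int_0^{R-\sqrt\eps} g(r)u'^2(r)\,\mathrm dr$ is bounded by $\eps^{-1}e^{-c/\sqrt\eps}\to0$ (in fact faster than any power of $\eps$), hence negligible. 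On the remaining window $[R-\sqrt\eps,R]$ one replaces $g(r)$ by $g(R)$ at the cost of $\max_{[R-\sqrt\eps,R]}|g(r)-g(R)|\cdot \eps\int_0^R u'^2$, and one is left to evaluate $\eps\int_{R-\sqrt\eps}^R u'^2(r)\,\mathrm dr$. Here I would use \eqref{cr-in2-add}: since $\eps u'(r)=2\sqrt{\pmb{\mathtt{C}}(u)}\sinh\frac{u(r)}2+O(\sqrt\eps)$, one has $\eps u'^2(r)=2\sqrt{\pmb{\mathtt{C}}(u)}\,u'(r)\sinh\frac{u(r)}2+O(\sqrt\eps)u'(r)$, and $u'(r)\sinh\frac{u(r)}2$ is an exact derivative, namely $\tfrac{\mathrm d}{\mathrm dr}\!\big(2\cosh\frac{u(r)}2\big)$. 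Thus
\begin{align*}
\eps\int_{R-\sqrt\eps}^R u'^2\,\mathrm dr
=4\sqrt{\pmb{\mathtt{C}}(u)}\Big(\cosh\tfrac{u(R)}2-\cosh\tfrac{u(R-\sqrt\eps)}2\Big)+O(\sqrt\eps)\cdot\eps\!\!\int_0^R\!\! u'\,\mathrm dr.
\end{align*}
Since $\pmb{\mathtt{C}}(u)\to1$ and, by \eqref{gradesti}, $u(R-\sqrt\eps)=O(\eps^{-1}e^{-c/\sqrt\eps})\to0$ so $\cosh\frac{u(R-\sqrt\eps)}2\to1$, and $\eps\int_0^R u'\,\mathrm dr=O(1)$ (again from \eqref{gradesti}, this integral is $O(\eps)\cdot$ bounded, in fact $\lesssim\eps$), the displayed quantity equals $4\big(\cosh\frac{u(R)}2-1\big)+o_\eps(1)$; tracking the errors more carefully gives the stated $\lesssim \max_{[R-\sqrt\eps,R]}|g(r)-g(R)|+\eps$. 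I should double-check the error bookkeeping: the $O(\sqrt\eps)$ in \eqref{cr-in2-add} times $\eps\int_0^R u'\lesssim\eps$ gives $O(\eps^{3/2})$, the replacement-of-$g$ error is $\max|g-g(R)|\cdot O(\eps)$, and $(\sqrt{\pmb{\mathtt{C}}(u)}-1)=O(\eps)$ by \eqref{cr-in2}, all absorbed into the claimed right-hand side.

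For \eqref{ce-12-3}, I would apply the just-proved asymptotic evaluation of weighted integrals of $\eps u'^2$ with the specific weight $g(r)=\frac{N-1}{r}-\frac{N-2}{2R^N}r^{N-1}$ appearing in \eqref{cr-in2}. This weight is continuous on $[0,R]$ and satisfies $g(R)=\frac{N-1}{R}-\frac{N-2}{2R}=\frac{N}{2R}$. Hence by \eqref{dirac-1},
\begin{align*}
\eps^2\int_{R/2}^R g(r)u'^2(r)\,\mathrm dr
=\eps\Big(\eps\int_0^R g(r)u'^2(r)\,\mathrm dr\Big)+O(\eps\cdot e^{-c/\eps})
=\eps\Big(4g(R)\big(\cosh\tfrac{u(R)}2-1\big)+o_\eps(1)\Big),
\end{align*}
where the lower limit $R/2$ versus $0$ is harmless by \eqref{gradesti}. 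Substituting $g(R)=\frac{N}{2R}$ gives $\eps^2\int_{R/2}^R g\,u'^2=\frac{2N}{R}\big(\cosh\frac{u(R)}2-1\big)\eps+o_\eps(1)\eps$, and then \eqref{cr-in2} yields $|\pmb{\mathtt{C}}(u)-1+\frac{2N}{R}(\cosh\frac{u(R)}2-1)\eps|\lesssim o_\eps(1)\eps+e^{-c/\eps}$. To sharpen $o_\eps(1)\eps$ to $\eps^{3/2}$ I would feed the refined error term from \eqref{dirac-1}: with $g$ Lipschitz near $R$ one has $\max_{[R-\sqrt\eps,R]}|g(r)-g(R)|\lesssim\sqrt\eps$, so the error in $\eps\int_0^R g u'^2$ is $\lesssim\sqrt\eps$, and multiplying by the extra $\eps$ gives $\lesssim\eps^{3/2}$, as claimed.

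The main obstacle I anticipate is the error accounting in \eqref{dirac-1}, specifically making the truncation window $[R-\sqrt\eps,R]$ do double duty: it must be narrow enough that $g\approx g(R)$ there (forcing the $\max_{[R-\sqrt\eps,R]}|g-g(R)|$ term) yet wide enough that the tail $[0,R-\sqrt\eps]$ is exponentially negligible — the choice $\sqrt\eps$ balances these, but one must verify that $\eps\int_0^{R-\sqrt\eps}u'^2\lesssim \eps^{-1}e^{-c/\sqrt\eps}$ genuinely beats every power of $\eps$ (it does, since $e^{-c/\sqrt\eps}=o(\eps^k)$ for all $k$). A secondary subtlety is that the boundary value $u(R)$ itself depends on $\eps$, so $\cosh\frac{u(R)}2-1$ is not a fixed constant; the lemma is stated correctly with $u(R)$ left explicit, and one must resist prematurely replacing it by its limit $\cosh\frac b2-1$ — that substitution is legitimate only after invoking Proposition~\ref{prop-ceps}, which is logically downstream. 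Everything else is routine: the exact-derivative trick $u'\sinh\frac u2=\frac{\mathrm d}{\mathrm dr}(2\cosh\frac u2)$ and the interior bounds of Lemma~\ref{asy-radial-lem} carry the argument.
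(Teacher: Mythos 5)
Your proposal follows essentially the same route as the paper's proof: split at $R-\sqrt\eps$, kill the tail via the interior bound~\eqref{gradesti}, use~\eqref{cr-in2-add} to replace $\eps u'^2$ by $2\sqrt{\pmb{\mathtt{C}}(u)}\,u'\sinh\tfrac u2$ up to error, integrate the exact derivative $u'\sinh\tfrac u2=\tfrac{\mathrm d}{\mathrm dr}\big(2\cosh\tfrac u2\big)$, replace $g$ by $g(R)$ on the narrow window, and finally specialize $g$ to feed back into \eqref{cr-in2}. The one structural departure is that you invoke $\sqrt{\pmb{\mathtt{C}}(u)}-1=O(\eps)$ directly from \eqref{cr-in2}, whereas the paper bootstraps: it first gets the crude form \eqref{dirac-1-new0303} with error $o_\eps(1)$, uses it to obtain \eqref{ce0304-2018} (hence $|\sqrt{\pmb{\mathtt{C}}(u)}-1|\lesssim\eps$), and only then sharpens \eqref{dirac-6}. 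Your shortcut is legitimate — \eqref{cr-in2} combined with \eqref{gradesti} already gives $\eps^2\int_{R/2}^R u'^2\lesssim\eps$, hence $|\pmb{\mathtt{C}}(u)-1|\lesssim\eps$ — but as written you assert it ``by \eqref{cr-in2}'' alone, which leaves out the needed estimate of $\eps^2\int u'^2$.

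Two slips in the bookkeeping deserve flagging. First, the error coming from \eqref{cr-in2-add} on $[R-\sqrt\eps,R]$ is
\[
\int_{R-\sqrt\eps}^R\big|\eps u'-2\sqrt{\pmb{\mathtt{C}}(u)}\sinh\tfrac u2\big|\,u'\,\mathrm dr\;\lesssim\;\sqrt\eps\int_{R-\sqrt\eps}^R u'=\sqrt\eps\big(u(R)-u(R-\sqrt\eps)\big)\;\lesssim\;\sqrt\eps,
\]
not ``$O(\sqrt\eps)\cdot\eps\int_0^R u'=O(\eps^{3/2})$'': the integral $\int_{R-\sqrt\eps}^R u'$ tends to $b$ and is $O(1)$, not $O(\eps)$. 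So the achievable error in \eqref{dirac-1} is $O(\sqrt\eps)$ (consistent with Remark~\ref{rk-1} at $\alpha=1$), and it is only after multiplying by the extra $\eps$ that one lands on $\eps^{3/2}$ for \eqref{ce-12-3}; your chain of inequalities still yields the right final bound, but the intermediate $O(\eps^{3/2})$ claim is wrong. Second, the weight $g(r)=\tfrac{N-1}{r}-\tfrac{N-2}{2R^N}r^{N-1}$ is singular at $r=0$, so it is not in $\mathrm C([0,R])$ and \eqref{dirac-1} does not apply to it directly. The paper's proof handles this by redefining $g$ to vanish on $[0,R/4]$ and to equal that expression only on $[R/2,R]$, the modification costing nothing by \eqref{gradesti}; your remark about the ``lower limit $R/2$ versus $0$'' addresses the integration range but not this domain-of-definition point, and should be stated explicitly.
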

\begin{proof}
We write the integral in (\ref{dirac-1}) as 
\begin{align}\label{dirac-2}
\eps\int_0^Rg(r)u'^2(r)\drr=\eps\left(\int_0^{R-\sqrt{\eps}}+\int_{R-\sqrt{\eps}}^R\right)g(r)u'^2(r)\drr,\quad0<\eps\ll1.
\end{align}
Note that $\sup_{[0,R]}|g|$ is finite and indepentent of $\eps$.
Thus, using (\ref{gradesti}) and passing through simple calculations, one finds
\begin{align}\label{dirac-3}
\eps\left|\int_0^{R-\sqrt{\eps}}g(r)u'^2(r)\drr\right|\lesssim{e}^{-\frac{1}{4\sqrt{\eps}}(\cosh{a}_0)^{-1/2}}.
\end{align}
On the other hand, by (\ref{max-u}), Lemma~\ref{asy-radial-lem}(i) and (\ref{cr-in2-add}), we have
\begin{align}\label{dirac-5}
\Bigg|\eps\int_{R-\sqrt{\eps}}^Rg(r){u}'^2(r)\drr-&2\sqrt{\pmb{\mathtt{C}}(u)}\int_{R-\sqrt{\eps}}^Rg(r)\sinh\frac{u(r)}{2}u'(r)\drr\Bigg|\notag\\
=&\,\int_{R-\sqrt{\eps}}^Ru'(r)|g(r)|\left|\eps{u}'(r)-2\sqrt{\pmb{\mathtt{C}}(u)}\sinh\frac{u(r)}{2}\right|\drr\\
\lesssim&\,\sqrt{\eps}\int_{R-\sqrt{\eps}}^Ru'(r)\drr=\sqrt{\eps}(u(R)-u(R-\sqrt{\eps}))\lesssim\sqrt{\eps}.\notag
\end{align}
We shall further estimate the term $2\sqrt{\pmb{\mathtt{C}}(u)}\int_{R-\sqrt{\eps}}^Rg(r)\sinh\frac{u(r)}{2}u'(r)\drr$ for $0<\eps\ll1$.
Note first that (\ref{gradesti}) and (\ref{cr-in2}) imply $\pmb{\mathtt{C}}(u)\to1$ as $\eps\downarrow0$.
Using the identity
\begin{align}\label{dirac-5ad}
\sqrt{\pmb{\mathtt{C}}(u)}\int_{R-\sqrt{\eps}}^R&g(r)\sinh\frac{u(r)}{2}u'(r)\drr\notag\\
=&\int_{R-\sqrt{\eps}}^R\left[\sqrt{\pmb{\mathtt{C}}(u)}\left(g(r)-g(R)\right)+\left(\sqrt{\pmb{\mathtt{C}}(u)}-1\right)g(R)\right]\sinh\frac{u(r)}{2}u'(r)\drr\\
&+2g(R)\left(\cosh\frac{u(R)}{2}-\cosh\frac{u(R-\sqrt{\eps})}{2}\right),\notag
\end{align} 
we can arrive at
\begin{align}\label{dirac-6}
&\left|\sqrt{\pmb{\mathtt{C}}(u)}\int_{R-\sqrt{\eps}}^Rg(r)\sinh\frac{u(r)}{2}u'(r)\drr-2g(R)\left(\cosh\frac{u(R)}{2}-1\right)\right|\notag\\
&\hspace{36pt}\leq4\left(\sqrt{\pmb{\mathtt{C}}(u)}\max_{[R-\sqrt{\eps},R]}|g(r)-g(R)|+\left|\left(\sqrt{\pmb{\mathtt{C}}(u)}-1\right)g(R)\right|\right)\sinh\frac{u(R)}{2}\\
&\hspace{50pt}+2|g(R)|\left(\cosh\frac{u(R-\sqrt{\eps})}{2}-1\right)\notag\\
&\hspace{36pt}\lesssim\max_{[R-\sqrt{\eps},R]}|g(r)-g(R)|+\left|\sqrt{\pmb{\mathtt{C}}(u)}-1\right|+{e}^{-\frac{(\cosh{a}_0)^{-1/2}}{4\sqrt{\eps}}}.\notag
\end{align}
Here we have used (\ref{max-u}) and (\ref{gradesti}) to
get $|\cosh\frac{u(R-\sqrt{\eps})}{2}-1|\lesssim{u}^2(R-\sqrt{\eps})\lesssim\exp\left(-\frac{1}{4\sqrt{\eps\cosh{a}_0}}\right)$
which asserts the last inequality of (\ref{dirac-6}).
Since $g$ is continuous and independent of $\eps$ and $R-\sqrt{\eps}\to{R}$ as $\eps\downarrow0$,
by (\ref{gradesti}), (\ref{cr-in2}), (\ref{dirac-2})--(\ref{dirac-5}) and (\ref{dirac-6}),
we get
\begin{align}\label{dirac-1-new0303}
\eps\int_0^Rg(r)u'^2(r)\drr=4g(R)\left(\cosh\frac{u(R)}{2}-1\right)+o_{\eps}(1).
\end{align}
Furthermore, when we set a function $g\in\mathrm{C}([0,R])$ satisfying
\begin{align}\notag
g(r)=0\,\,\mathrm{for}\,\,r\in[0,\frac{R}{4}];\,\,g(r)=\frac{N-1}{r}-\frac{N-2}{2R^N}r^{N-1}\,\,\mathrm{for}\,\,r\in[\frac{R}{2},R],
\end{align}
 by 
 (\ref{gradesti}), (\ref{cr-in2}) and (\ref{dirac-1-new0303}), we have
\begin{align}\label{ce0304-2018}
\pmb{\mathtt{C}}(u)=&1-\eps^2\int_{\frac{R}{2}}^R\left(\frac{N-1}{r}-\frac{N-2}{2R^N}r^{N-1}\right)u'^2(r)\drr+o_{\eps}(1)\notag\\
=&1-4{g}(R)\left(\cosh\frac{u(R)}{2}-1+o_{\eps}(1)\right)\eps\\
=&1-\frac{2N}{R}\left(\cosh\frac{u(R)}{2}-1+o_{\eps}(1)\right)\eps.\notag
\end{align}
In particular, $\left|\sqrt{\pmb{\mathtt{C}}(u)}-1\right|\lesssim\eps$ as $0<\eps\ll1$.
Along with (\ref{dirac-6}), we arrive at (\ref{dirac-1}).
Moreover, by (\ref{cr-in2}) and (\ref{dirac-1}), (\ref{ce0304-2018}) can be improved by
\begin{align}
&\left|\pmb{\mathtt{C}}(u)-1+\frac{2N}{R}\left(\cosh\frac{u(R)}{2}-1\right)\eps\right|\notag\\
&\quad\leq
\left|\pmb{\mathtt{C}}(u)-1+\eps^2\int_{\frac{R}{2}}^R\left(\frac{N-1}{r}-\frac{N-2}{2R^N}r^{N-1}\right)u'^2(r)\drr\right|\notag\\
&\quad\quad+\eps\left|\eps\int_{\frac{R}{2}}^R\left(\frac{N-1}{r}-\frac{N-2}{2R^N}r^{N-1}\right)u'^2(r)\drr-\frac{2N}{R}\left(\cosh\frac{u(R)}{2}-1\right)\right|\notag\\
&\quad\lesssim{e}^{-\frac{R}{16\eps}(\cosh{a}_0)^{-1/2}}+\eps^{3/2}
\lesssim\eps^{3/2}.\notag
\end{align}
Therefore, we prove (\ref{ce-12-3}) and completes the proof of Lemma~\ref{dirac-lem}.
\end{proof}
\begin{remark}\label{rk-1}
Assume $g\in\mathrm{C}^{\alpha}([0,R])$ is H\"{o}lder continuous with exponent $\alpha\in(0,1)$.
Owing to (\ref{ce-12-3}) and (\ref{dirac-6}), (\ref{dirac-1}) can be improved by
\begin{align}\notag
\eps\int_0^Rg(r)u'^2(r)\drr=4g(R)\left(\cosh\frac{u(R)}{2}-1\right)+\eps^{\alpha/2}\mathcal{O}(1).
\end{align} 
\end{remark}

Using Lemmas \ref{cruc-in-thm} and \ref{dirac-lem},
we now give the proof of Theorem~\ref{DtoN-map} as follows. 
\begin{proof}[\textbf{Proof of Theorem~\ref{DtoN-map}}]
Setting $t=R$ in (\ref{cr-in1}) gives
\begin{align*}
\left|\frac{\eps^2}{2}u'^2(R)+(N-1)\eps^2\int_{\frac{R}{2}}^R\frac{1}{r}u'^2(r)\drr
-\pmb{\mathtt{C}}(u)(\cosh{u(R)}-1)\right|\lesssim{e}^{-\frac{R}{16\eps}(\cosh{a}_0)^{-1/2}}.
\end{align*}
Next, consider a continuous function $g$ with $g(r)=\frac{1}{r}$ for $r\in[\frac{R}{2},R]$
and $g(r)=0$ near $r=0$ in (\ref{dirac-1}). Using (\ref{gradesti}), one immediately finds
$\Big|\eps^2\int_{\frac{R}{2}}^R\frac{1}{r}u'^2(r)\drr-\frac{8}{R}\eps\sinh^2\frac{u(R)}{4}\Big|\lesssim{\eps}^{3/2}$.
Note also the estimate of $\pmb{\mathtt{C}}(u)$ in (\ref{ce-12-3}).
As a consequence, after making appropriate manipulations we obtain
\begin{align}
&\left|\frac{\eps^2}{2}u'^2(R)-2\sinh^2\frac{u(R)}{2}\left[1-\frac{4}{R}\cdot\frac{\sinh^2\frac{u(R)}{4}}{\sinh^2\frac{u(R)}{2}}\left(N\cosh^2\frac{u(R)}{2}-1\right)\eps\right]\right|\notag\\
&\hspace{36pt}\leq\left|\frac{\eps^2}{2}u'^2(R)+(N-1)\eps^2\int_{\frac{R}{2}}^R\frac{1}{r}u'^2(r)\drr
-\pmb{\mathtt{C}}(u)(\cosh{u(R)}-1)\right|\notag\\
&\hspace{36pt}\quad+(N-1)\left|\eps^2\int_{\frac{R}{2}}^R\frac{1}{r}u'^2(r)\drr-\frac{8}{R}\eps\sinh^2\frac{u(R)}{4}\right|\notag\\
&\hspace{36pt}\quad+\left|\pmb{\mathtt{C}}(u)(\cosh{u(R)}-1)-2\sinh^2\frac{u(R)}{2}\left(1-\frac{4N}{R}\eps\sinh^2\frac{u(R)}{4}\right)\right|
\lesssim\,\eps^{3/2}.\notag
\end{align}
Since $u(R)$ is uniformly bounded to $\eps$ (cf. Lemma~\ref{asy-radial-lem}(i)), together with 
assumption~(\ref{assume-u34}) we conclude
\begin{align}
\left|{u}'(R)-2\sinh\frac{u(R)}{2}\left[\frac{1}{\eps}-\frac{2}{R}\cdot\frac{\sinh^2\frac{u(R)}{4}}{\sinh^2\frac{u(R)}{2}}\left(N\cosh^2\frac{u(R)}{2}-1\right)\right]\right|\lesssim{\eps}^{1/2},\notag
\end{align}
together with (\ref{DtoN-map-id0}), we get (\ref{DtoN-map-id1}) and (\ref{0304-map})
and complete the proof of Theorem~\ref{DtoN-map}.~\end{proof}

\subsection{Proof of Proposition~\ref{prop-ceps}}\label{sec03162018}
\noindent

The proof of Proposition~\ref{prop-ceps} is stated as follows.

By the Robin boundary condition~(\ref{bd2}) and Theorem~\ref{DtoN-map},
we have
\begin{align}\label{conph-1}
\left|\frac{u(R)+2\gamma\sinh\frac{u(R)}{2}}{\eps}-\frac{2\gamma}{R}{\tanh\frac{u(R)}{4}}\left(N\cosh^2\frac{u(R)}{2}-1\right)-a_0\right|\lesssim\sqrt{\eps},
\end{align}
as $0<\eps\ll1$. Then by (\ref{arbequ}) and the leading term of \eqref{conph-1} with respect to $\eps$ it is apparent that 
\begin{align}\label{add-2020}
\left|(u(R)-b)+2\gamma\left(\sinh\frac{u(R)}{2}-\sinh\frac{b}{2}\right)\right|\lesssim{\eps}.
\end{align}
Since $s+2\gamma\sinh\frac{s}{2}$ is increasing to $s$, by (\ref{max-u}) and \eqref{add-2020} it immediately follows
\begin{align}\label{conph-2}
\left|u(R)-b\right|\lesssim{\eps}.
\end{align}
Consequently, by (\ref{max-u}), (\ref{ce-12-3}) and (\ref{conph-2}), we obtain
\begin{align*}
\left|\pmb{\mathtt{C}}(u)-1+\frac{2N}{R}\left(\cosh\frac{b}{2}-1\right)\eps\right|\lesssim&\,\eps^{3/2}+\left|\cosh\frac{b}{2}-\cosh\frac{u(R)}{2}\right|\eps\\
\lesssim&\,\eps^{3/2}+\left|b-u(R)\right|\eps\lesssim\eps^{3/2}.
\end{align*}
This gives (\ref{ceps-1}) and 
\begin{equation}\label{11-0305-1230}
\left|\pmb{\mathtt{C}}(u)-\langg\pmb{\mathtt{C}}(u)\rangg\right|\lesssim\eps^{3/2}.
\end{equation}
Now we shall prove (\ref{mainth1-id6}) and (\ref{mainth1-id5}).
By (\ref{conph-2}), we set
\begin{align}\label{dirich-1119}
\begin{cases}
u(R)=b+\widetilde{b}_{\eps},\\
b\,\,\mathrm{is\,\,uniquely\,\,determined\,\,by\,\,(\ref{arbequ}),\,\,and}\,\,|\widetilde{b}_{\eps}|\lesssim\eps\,\,\mathrm{as}\,\,\eps\downarrow0.
\end{cases}
\end{align}
It suffices to calculate the leading order term and an optimal second order error of $\widetilde{b}_{\eps}$ with respect to $\eps$ as $0<\eps\ll1$. Notice the relation $b+2\gamma\sinh\frac{b}{2}=a_0$.
Putting (\ref{dirich-1119}) into (\ref{conph-1})
and passing through simple calculations, one arrives at
\begin{align}\label{dirich-1119-1043}
\left|\widetilde{b}_{\eps}-\eps\frac{2\gamma}{R}\frac{\left(N\cosh^2\frac{b}{2}-1\right)\tanh\frac{b}{4}}{\gamma\cosh\frac{b}{2}+1}\right|\lesssim\eps^{3/2}.
\end{align}
Here we have used (\ref{arbequ}) to get approximations
\begin{align}
\bigg|\sinh\frac{b+\widetilde{b}_{\eps}}{2}-&\,\sinh\frac{b}{2}-\frac{\widetilde{b}_{\eps}}{2}\cosh\frac{b}{2}\bigg|
\lesssim\eps^2,\label{0308-a1}\\
\bigg|\tanh\frac{b+\widetilde{b}_{\eps}}{4}-&\,\tanh\frac{b}{4}-\frac{\widetilde{b}_{\eps}}{4}\sech^2\frac{b}{4}\bigg|
\lesssim\eps^2,\label{0308-a2}\\
\bigg|\cosh^2\frac{b+\widetilde{b}_{\eps}}{2}-&\,\cosh^2\frac{b}{2}-\frac{\widetilde{b}_{\eps}}{2}\sinh{b}\bigg|\lesssim\eps^2,\label{0308-a3}
\end{align}
and
\begin{align*}
\bigg|{\tanh\frac{b+\widetilde{b}_{\eps}}{4}}\left(N\cosh^2\frac{b+\widetilde{b}_{\eps}}{2}-1\right)-\tanh\frac{b}{4}\left(N\cosh^2\frac{b}{2}-1\right)\bigg|\lesssim\eps.
\end{align*}
As a consequence,
\begin{align}\label{dirich-1119-1004}
\left|u(R)-b-\eps\frac{2\gamma}{R}\frac{\left(N\cosh^2\frac{b}{2}-1\right)\tanh\frac{b}{4}}{\gamma\cosh\frac{b}{2}+1}\right|\lesssim\eps^{3/2}.
\end{align}
Along with the Robin boundary condition~(\ref{bd2}) yields
\begin{align*}
\left|u'(R)-\frac{2}{\eps}\sinh\frac{b}{2}+\frac{2}{R}\cdot\frac{\left(N\cosh^2\frac{b}{2}-1\right)\tanh\frac{b}{4}}{\gamma\cosh\frac{b}{2}+1}\right|\lesssim\eps^{1/2}.
\end{align*}
Therefore, we get (\ref{mainth1-id6}) and (\ref{mainth1-id5})
and 
\begin{align}\label{12470305}
\left|u(R)-\langg{u}(R)\rangg\right|\lesssim\eps^{3/2},\quad\left|u'(R)-\langg{u}'(R)\rangg\right|\lesssim\eps^{1/2}.
\end{align}

(\ref{11-0305}) immediately follows from (\ref{11-0305-1230}) and (\ref{12470305}), and, therefore,
 the proof of Proposition~\ref{prop-ceps} is completed.

\section{\textcolor{red}{Domain-size} effects on the thin layer with pointwise asymptotics}\label{sec-curvature}

\subsection{Proof of Theorem~\ref{bdylayer-exist-thm}}\label{sec-pf-0315-2018}
\noindent

Recall $0<u(r)\leq{b}$ and $u'(r)>0$ in $(0,R]$ (cf. Lemma~\ref{asy-radial-lem}(i)).
To prove Theorem~\ref{bdylayer-exist-thm}, we need the following estimate.

\textbf{Claim~1.} For $r_{\eps}\in\pmb{\mathbb{B}_{\partial}^{\eps}}$, as $0<\eps\ll1$ there holds
\begin{align}\label{1120-2}
\left|\log\left|\frac{\tanh\frac{u(R)}{4}}{\tanh\frac{u(r_{\eps})}{4}}\right|-\sqrt{\pmb{\mathtt{C}}(u)}\cdot\frac{R-r_{\eps}}{\eps}\right|\lesssim\frac{\sqrt{\eps}}{\sinh\frac{u(r_{\eps})}{2}}\cdot\sup_{0<\eps\ll1}\frac{R-r_{\eps}}{\eps}.
\end{align}
Moreover,
\begin{align}\label{claim0315}
\mathrm{if}\,\,\limsup_{\eps\downarrow0}\frac{R-r_{\eps}}{\eps}<\infty\,\,\mathrm{and}\,\,\lim_{\eps\downarrow0}u(r_{\eps})=0,\,\,\mathrm{then}\,\,
\lim_{\eps\downarrow0}\frac{1}{\sqrt{\eps}}\sinh\frac{u(r_{\eps})}{2}=0.
\end{align}
\begin{proof}[Proof of Claim~1]
By (\ref{max-u}) and (\ref{cr-in2-add}) we have
\begin{align}\label{1120-1}
\left|\frac{\eps{u}'(t)}{\sinh\frac{u(t)}{2}}-2\sqrt{\pmb{\mathtt{C}}(u)}\right|\lesssim\frac{\sqrt{\eps}}{\sinh\frac{u(r_{\eps})}{2}},
\end{align}
for $t\in[r_{\eps},R]$. Integrating (\ref{1120-1}) over $[r_{\eps},R]$ 
and using
\begin{align}\label{chchchha-5}
\int_{r_{\eps}}^R\frac{{u}'(r)}{\sinh\frac{u(t)}{2}}\dtt=2\log\left|\frac{\tanh\frac{u(R)}{4}}{\tanh\frac{u(r_{\eps})}{4}}\right|,
\end{align}
one immediately obtains (\ref{1120-2}).

Next, we assume that $r_{\eps}\in\pmb{\mathbb{B}_{\partial}^{\eps}}$ satisfies $\displaystyle\lim_{\eps\downarrow0}u(r_{\eps})=0$.
Then by (\ref{mainth1-id6}) and $b>0$, we have
$\lim_{\eps\downarrow0}\log\left|\frac{\tanh\frac{u(R)}{4}}{\tanh\frac{u(r_{\eps})}{4}}\right|=\infty$,
together with (\ref{ceps-1}) and (\ref{1120-2}), we conclude 
\begin{equation*}
0\leq\limsup_{\eps\downarrow0}\frac{1}{\sqrt{\eps}}\sinh\frac{u(r_{\eps})}{2}\lesssim\limsup_{\eps\downarrow0}\bigg(\log\left|\frac{\tanh\frac{u(R)}{4}}{\tanh\frac{u(r_{\eps})}{4}}\right|-\sqrt{\pmb{\mathtt{C}}(u)}\cdot\frac{R-r_{\eps}}{\eps}\bigg)^{-1}=0.
\end{equation*} 
This proves (\ref{claim0315}) and completes the proof of Claim~1. 
\end{proof}

Before proving Theorem~\ref{bdylayer-exist-thm}, we notice that by 
(\ref{ceps-1}) and (\ref{cr-in2-add}), there must hold
\begin{align}\label{steven}
\liminf_{\eps\downarrow0}{u}(r_{\eps})>0\,\Longleftrightarrow\,\liminf_{\eps\downarrow0}\eps{u}'(r_{\eps})>0.
\end{align}
We are now turning to the proof of Theorem~\ref{bdylayer-exist-thm}.
 \begin{proof}[Proof of Theorem~\ref{bdylayer-exist-thm}]
By Lemma~\ref{asy-radial-lem}(ii) and (\ref{steven}), it suffices to prove $\liminf_{\eps\downarrow0}u(r_{\eps})>0$ for $r_{\eps}\in\pmb{\mathbb{B}_{\partial}^{\eps}}$ satisfying $\limsup_{\eps\downarrow0}\frac{R-r_{\eps}}{\eps}<\infty$.
Suppose by contradiction that there exists $\hat{r}_{\eps}\in\pmb{\mathbb{B}_{\partial}^{\eps}}$ with $\frac{R-\hat{r}_{\eps}}{\eps}=\hat{\mathtt{p}}+o_{\eps}(1)$ such that $\lim_{\eps\downarrow0}u(\hat{r}_{\eps})=0$, where $\hat{\mathtt{p}}\geq0$
is independent of $\eps$. Then by (\ref{claim0315}) we have
\begin{align}\label{0315-ni1035}
\lim_{\eps\downarrow0}\frac{1}{\sqrt{\eps}}\sinh\frac{u(\hat{r}_{\eps})}{2}=0.
\end{align}
On the other hand, we set
\begin{equation}\label{0316-mor}
 \widetilde{c}_{\eps}=2\log(\sqrt{\eps}+\sqrt{\eps+1}).
\end{equation}
By (\ref{mainth1-id6}), (\ref{0315-ni1035}) and (\ref{0316-mor}) we have
 $u(\hat{r}_{\eps})<\widetilde{c}_{\eps}<u(R)$ as $0<\eps\ll1$ due to
\begin{align*}
0\stackrel{\mathrm{as}\,\,\eps\downarrow0}{\leftarrow\!\!\!-\!\!\!-\!\!\!-\!\!\!-}\frac{1}{\sqrt{\eps}}\sinh\frac{u(\hat{r}_{\eps})}{2}<\frac{1}{\sqrt{\eps}}\sinh\frac{\widetilde{c}_{\eps}}{2}=1<\frac{1}{\sqrt{\eps}}\sinh\frac{u(R)}{2}\stackrel{\mathrm{as}\,\,\eps\downarrow0}{-\!\!\!-\!\!\!-\!\!\!-\!\!\!\rightarrow}\infty
\end{align*}
and the fact that $\sinh{s}$
is a strictly increasing function.
By the intermediate value theorem there exists $\widetilde{r}_{\eps}\in[\hat{r}_{\eps},R]\subset\pmb{\mathbb{B}_{\partial}^{\eps}}$
such that $u(\widetilde{r}_{\eps})=\widetilde{c}_{\eps}$.
In particular, $\widetilde{r}_{\eps}\in\pmb{\mathbb{B}_{\partial}^{\eps}}$ satisfies
\begin{align*}
\limsup_{\eps\downarrow0}&\frac{R-\widetilde{r}_{\eps}}{\eps}\leq\hat{\mathtt{p}},\,\,
 \lim_{\eps\downarrow0}u(\widetilde{r}_{\eps})=0,\,\,\mathrm{and},\,\,
\lim_{\eps\downarrow0}\frac{1}{\sqrt{\eps}}\sinh\frac{u(\widetilde{r}_{\eps})}{2}\neq0,
\end{align*}
contradicting to (\ref{claim0315}).
As a consequence, 
\begin{align}\label{0315hence}
\liminf_{\eps\downarrow0}u(r_{\eps})>0\,\,\mathrm{for\,\,each}\,\,r_{\eps}\in\pmb{\mathbb{B}_{\partial}^{\eps}}.
\end{align}
Along with (\ref{steven}),
therefore, we get $\liminf_{\eps\downarrow0}{\eps}u'(r_{\eps})>0$ and complete the proof of Theorem~\ref{bdylayer-exist-thm}.
\end{proof}

\subsection{Proof of Theorem~\ref{new-cor}}\label{sec-mainthm-2}
\noindent

To prove Theorem~\ref{new-cor}, we need to collect some preliminary estimates.
Firstly, based on Theorem~\ref{DtoN-map},
we shall generalize the concept of the Dirichlet-to-Neumann map at $r_{\eps}\in\pmb{\mathbb{B}_{\partial}^{\eps}}$,
and establish more refined asymptotic approximations of $u'(r_{\eps})$
and $(R-r_{\eps})/\eps$ as $0<\eps\ll1$.

\begin{lemma}\label{lem-1119-9011}
Let $r_{\eps}\in\pmb{\mathbb{B}_{\partial}^{\eps}}$.
Under the same hypotheses as in Theorem~\ref{bdylayer-exist-thm},
 as $0<\eps\ll1$ we have 
\begin{align}\label{1119-9011-id}
\left|u'(r_{\eps})-2\sinh\frac{u(r_{\eps})}{2}\left[\frac{1}{\eps}-\frac{1}{R}\left(2N\sinh^2\frac{b}{4}+\frac{N-1}{2}\sech^2\frac{u(r_{\eps})}{4}\right)\right]\right|\lesssim\sqrt{\eps}
\end{align}
and
\begin{equation}\label{new-r-eps0305}
\begin{aligned}
\Bigg|\frac{R-r_{\eps}}{\eps^2}-\left(\frac{1}{\eps}+\frac{2N}{R}\sinh^2\frac{b}{4}\right)
\Bigg[&\left(1+\frac{N-1}{2R}\right)\log\left|\frac{\tanh\frac{u(R)}{4}}{\tanh\frac{u(r_{\eps})}{4}}\right|\\
&\quad+\frac{N-1}{4R}\left(\tanh^2\frac{u(r_{\eps})}{4}-\tanh^2\frac{u(R)}{4}\right)\Bigg]\Bigg|\lesssim\sqrt{\eps}.
\end{aligned}
\end{equation}
\end{lemma}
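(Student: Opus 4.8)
The plan is to derive both estimates from the already-established Dirichlet-to-Neumann machinery, essentially by ``transplanting'' Theorem~\ref{DtoN-map} from the boundary point $r=R$ to an interior point $r_{\eps}\in\pmb{\mathbb{B}_{\partial}^{\eps}}$, and then integrating the first-order relation. First I would exploit Lemma~\ref{cruc-in-thm}: evaluating (\ref{cr-in1}) at $t=r_{\eps}$ together with the estimate $\pmb{\mathtt{C}}(u)=1-\frac{2N}{R}(\cosh\frac{b}{2}-1)\eps+\mathcal{O}(\eps^{3/2})$ from Proposition~\ref{prop-ceps} gives a relation of the form $\frac{\eps^2}{2}u'^2(r_{\eps})+(N-1)\eps^2\int_{R/2}^{r_{\eps}}\frac1r u'^2\,\drr=\pmb{\mathtt{C}}(u)(\cosh u(r_{\eps})-1)+\mathcal{O}(e^{-c/\eps})$. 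The new ingredient compared with the boundary case is that the integral $\eps^2\int_{R/2}^{r_{\eps}}\frac1r u'^2\,\drr$ now runs only up to $r_{\eps}$, not $R$; I would estimate its contribution by writing $\int_{R/2}^{r_{\eps}}=\int_{R/2}^{R}-\int_{r_{\eps}}^{R}$, handling the first piece by the Dirac-delta calculation of Lemma~\ref{dirac-lem} (it contributes $\frac{8}{R}\eps\sinh^2\frac{u(R)}{4}+\mathcal{O}(\eps^{3/2})=\frac{8}{R}\eps\sinh^2\frac{b}{4}+\mathcal{O}(\eps^{3/2})$ after (\ref{conph-2})), and the second piece $\eps^2\int_{r_{\eps}}^{R}\frac1r u'^2$ by the pointwise substitution $\eps u'\approx 2\sqrt{\pmb{\mathtt{C}}(u)}\sinh\frac{u}{2}$ from (\ref{cr-in2-add}), changing variables to $u$, which yields $\frac{2\eps}{R}(\cosh u(R)-\cosh u(r_{\eps}))+\text{(lower order)}$. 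Collecting terms, solving the resulting quadratic for $\eps u'(r_{\eps})$ and using $\sqrt{1-x}=1-\tfrac{x}{2}+\mathcal{O}(x^2)$ together with the elementary identities $\cosh u(r_{\eps})-1=2\sinh^2\frac{u(r_{\eps})}{2}$ and $\sinh^2\frac{u(r_{\eps})}{4}/\sinh^2\frac{u(r_{\eps})}{2}=\tfrac14\sech^2\frac{u(r_{\eps})}{4}$ should produce exactly (\ref{1119-9011-id}); the $2N\sinh^2\frac{b}{4}$ term is the contribution of the nonlocal coefficient's second-order term combined with the $\int_{R/2}^R$ Dirac mass, while the $\frac{N-1}{2}\sech^2\frac{u(r_{\eps})}{4}$ term comes from the $\int_{r_{\eps}}^R$ truncation.

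For (\ref{new-r-eps0305}) the plan is to integrate (\ref{1119-9011-id}) from $r_{\eps}$ to $R$ after dividing by $2\sinh\frac{u}{2}$. Rewriting (\ref{1119-9011-id}) as $\dfrac{u'(t)}{2\sinh\frac{u(t)}{2}}=\dfrac1\eps-\dfrac1R\big(2N\sinh^2\frac b4+\tfrac{N-1}{2}\sech^2\frac{u(t)}{4}\big)+\dfrac{\mathcal{O}(\sqrt\eps)}{\sinh\frac{u(t)}{2}}$ and integrating over $[r_{\eps},R]$, the left side becomes $\log\big|\tanh\frac{u(R)}{4}/\tanh\frac{u(r_{\eps})}{4}\big|$ by (\ref{chchchha-5}); on the right side the constant part gives $\frac{R-r_{\eps}}{\eps}-\frac{R-r_{\eps}}{R}\cdot 2N\sinh^2\frac b4$, and the $\sech^2\frac{u}{4}$ part, under the substitution $\int_{r_{\eps}}^R \sech^2\frac{u(t)}{4}\cdot\frac{u'(t)}{2\sinh\frac{u(t)}{2}}\,\dtt$ (inserting $u'/(2\sinh\frac u2)\approx 1/\eps$ to leading order), converts to an elementary integral in the variable $w=\tanh\frac u4$ producing $\frac{1}{\eps}\big(\log\big|\tanh\frac{u(R)}4/\tanh\frac{u(r_{\eps})}4\big|+\tfrac12(\tanh^2\frac{u(r_{\eps})}4-\tanh^2\frac{u(R)}4)\big)$ up to the stated error. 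Rearranging to solve for $\frac{R-r_{\eps}}{\eps}$, then dividing by $\eps$ and absorbing the cross-terms (using $R-r_{\eps}=\mathcal{O}(\eps)$ so that $\frac{R-r_{\eps}}{R}\cdot 2N\sinh^2\frac b4$ combined with the $\frac1\eps$ factor gives the $\frac{2N}{R}\sinh^2\frac b4$ correction multiplying the bracket), should yield (\ref{new-r-eps0305}) exactly. The error terms must be tracked carefully: the $\mathcal{O}(\sqrt\eps)/\sinh\frac{u}{2}$ term integrated against $u'$ is $\mathcal{O}(\sqrt\eps)\cdot 2\log|\tanh\frac{u(R)}4/\tanh\frac{u(r_{\eps})}4|=\mathcal{O}(\sqrt\eps)$ on $\pmb{\mathbb{B}_{\partial}^{\eps}}$ since on this set $u(r_{\eps})$ is bounded away from $0$ by Theorem~\ref{bdylayer-exist-thm}, which keeps the logarithm bounded.

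The main obstacle I anticipate is the bookkeeping of which cross-terms are genuinely $\mathcal{O}(\sqrt\eps)$ versus which contribute to the second-order ($\mathcal{O}(1)$ after dividing by $\eps^2$) term. In particular, when I replace $\eps u'$ by $2\sqrt{\pmb{\mathtt{C}}(u)}\sinh\frac u2$ inside the truncated integrals, the error in (\ref{cr-in2-add}) is $\mathcal{O}(\sqrt\eps)$ \emph{pointwise}, and one must verify that after multiplication by the $\eps$ or $\eps^2$ prefactors and integration over an interval of length $\mathcal{O}(\eps)$ this does not pollute the $\mathcal{O}(\eps)$ (resp. $\mathcal{O}(\eps^2)$) order we are tracking. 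A subtle point is that $\sinh\frac{u(r_{\eps})}{2}$ may be small when $\mathtt p$ is large, so the bound $\frac{\sqrt\eps}{\sinh(u(r_{\eps})/2)}$ from (\ref{1120-2})-style estimates is only uniform on bounded $\mathtt p$-intervals — which is precisely the uniformity claim (\ref{0403-2018}) in Theorem~\ref{new-cor}, so Lemma~\ref{lem-1119-9011} should be stated and used with that restriction in mind. Everything else is elementary hyperbolic-function calculus and change of variables.
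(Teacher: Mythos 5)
Your approach is essentially the same as the paper's: both parts rest on evaluating (\ref{cr-in1}) at $t=r_{\eps}$, inserting the $\pmb{\mathtt{C}}(u)$ expansion from Proposition~\ref{prop-ceps}, evaluating the truncated Dirac-mass integral $\eps\int_{R/2}^{r_{\eps}}\frac1r u'^2\drr$, then integrating the resulting first-order relation $\frac{u'}{2\sinh(u/2)}=\frac1\eps-\ldots$ over $[r_{\eps},R]$ and using the $\tanh$ change of variables. The paper handles the truncated integral by directly repeating the Lemma~\ref{dirac-lem} argument with upper limit $r_{\eps}$ (yielding $\frac{8}{R}\sinh^2\frac{u(r_{\eps})}{4}$ in one step), whereas you split it as $\int_{R/2}^R-\int_{r_{\eps}}^R$; the two are algebraically equivalent, though your second piece should read $\frac{4\eps}{R}\bigl(\cosh\frac{u(R)}{2}-\cosh\frac{u(r_{\eps})}{2}\bigr)=\frac{8\eps}{R}\bigl(\sinh^2\frac{u(R)}{4}-\sinh^2\frac{u(r_{\eps})}{4}\bigr)$ rather than $\frac{2\eps}{R}(\cosh u(R)-\cosh u(r_{\eps}))$ — a constant/argument slip that cancels to the correct $\frac{8\eps}{R}\sinh^2\frac{u(r_{\eps})}{4}$ once fixed. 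Your observation about uniformity of the $\mathcal{O}(\sqrt\eps)$ bound relying on $\liminf u(r_{\eps})>0$ on bounded $\mathtt{p}$-sets is exactly the point the paper makes via Theorem~\ref{bdylayer-exist-thm}.
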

\begin{proof}
 We first deal with (\ref{1119-9011-id}) via the concept of Proposition~\ref{prop-ceps} and Lemma~\ref{dirac-lem}.
Note that $r_{\eps}\in\pmb{\mathbb{B}_{\partial}^{\eps}}$ implies $\left|\frac{1}{r_{\eps}}-\frac{1}{R}\right|\lesssim\eps$ and $R-\sqrt{\eps}<r_{\eps}\leq{R}$ as $0<\eps\ll1$.
Hence, for function $g\in\mathrm{C}([\frac{R}{2},R])$
we may follow the same argument of (\ref{dirac-2})--(\ref{dirac-6}) to get
\begin{equation}\label{ccc-hahaha}
    \begin{aligned}
\eps\int_{\frac{R}{2}}^{r_{\eps}}g(r)u'^2(r)\drr=&\,\eps\left\{\int_{\frac{R}{2}}^{R-\sqrt{\eps}}+\int_{R-\sqrt{\eps}}^{r_{\eps}}\right\}g(r)u'^2(r)\drr\\
=&\,8g(R)\sinh^2\frac{u(r_{\eps})}{4}+\pi_{\eps;g}(r_{\eps})
\end{aligned}
\end{equation}
with
\begin{align}\label{pi-eps}
|\pi_{\eps;g}(r_{\eps})|\lesssim\max_{r\in[R-\sqrt{\eps},r_{\eps}]}|g(r)-g(R)|+\eps\stackrel{\mathrm{as}\,\,\eps\downarrow0}{-\!\!\!-\!\!\!-\!\!\!-\!\!\!\rightarrow}0.
\end{align}
Since $\displaystyle\liminf_{\eps\downarrow0}u(r_{\eps})>0$ (by Theorem~\ref{bdylayer-exist-thm}),
using (\ref{cr-in1}), (\ref{ccc-hahaha}), Proposition~\ref{prop-ceps} and Lemma~\ref{asy-radial-lem}(i)
we can derive a relationship between $u(r_{\eps})$ and $u'(r_{\eps})$ as follows:
\begin{align}\label{chchchha}
\eps{u}'(r_{\eps})
=&\Bigg[2\pmb{\mathtt{C}}(u)\left(\cosh{u(r_{\eps})}-1\right)-2(N-1)\eps^2\int_{\frac{R}{2}}^{r_{\eps}}\frac{1}{r}u'^2(r)\drr+{e}^{-\frac{R}{16\eps}(\cosh{a}_0)^{-1/2}}\mathcal{O}(1)\Bigg]^{1/2}\notag\\
=&\Bigg[4\left(1-\eps\frac{4N}{R}\sinh^2\frac{b}{4}\right)\sinh^2\frac{u(r_{\eps})}{2}-\eps\frac{16(N-1)}{R}\sinh^2\frac{u(r_{\eps})}{4}+\eps^{3/2}\mathcal{O}(1)\Bigg]^{1/2}\\
=&2\sinh\frac{u(r_{\eps})}{2}\left[1-\frac{\eps}{R}\left(2N\sinh^2\frac{b}{4}+\frac{N-1}{2}\sech^2\frac{u(r_{\eps})}{4}+\sqrt{\eps}\mathcal{O}(1)\right)\right].\notag
\end{align}
We stress here that the quantity $\mathcal{O}(1)$ is uniformly bounded to $r_{\eps}$ 
because $|\pi_{\eps;g}(r_{\eps})|\lesssim\sqrt{\eps}$ for $g(r)=\frac{1}{r}$.
Therefore, (\ref{1119-9011-id}) follows from (\ref{max-u}) and (\ref{chchchha}).

For the convenience, in (\ref{chchchha}) we replace $r_{\eps}$ with $r$ and obtain
\begin{align}\label{chchchha-2}
\frac{u'(r)}{2\sinh\frac{u(r)}{2}}=\frac{1}{\eps}-\frac{2N}{R}\sinh^2\frac{b}{4}-\frac{N-1}{2R}\sech^2\frac{u(r)}{4}
+\sqrt{\eps}\mathcal{O}(1).
\end{align}
Integrating (\ref{chchchha-2}) over $[r_{\eps},R]$ immediately gives
\begin{align}\label{chchchha-3}
\frac{1}{2}\int_{r_{\eps}}^R\frac{{u}'(r)}{\sinh\frac{u(r)}{2}}\drr=&\left(\frac{1}{\eps}-\frac{2N}{R}\sinh^2\frac{b}{4}+\sqrt{\eps}\mathcal{O}(1)\right)(R-r_{\eps})-\frac{N-1}{2R}\int_{r_{\eps}}^R\sech^2\frac{u(r)}{4}\drr.
\end{align}
Note also that $u$ is uniformly bounded to $\eps$ and $\frac{\eps{u}'(r)}{2\sinh\frac{u(r)}{2}}=1+\eps\mathcal{O}(1)$ (by (\ref{chchchha-2})). Hence, one may check that
\begin{align}\label{chchchha-6}
\int_{r_{\eps}}^R\sech^2\frac{u(r)}{4}\drr=&\int_{r_{\eps}}^R\sech^2\frac{u(r)}{4}\left(\frac{\eps{u}'(r)}{2\sinh\frac{u(r)}{2}}+\eps\mathcal{O}(1)\right)\drr\notag\\
=&\log\left|\frac{\tanh\frac{u(R)}{4}}{\tanh\frac{u(r_{\eps})}{4}}\right|+\frac{1}{2}\left(\tanh^2\frac{u(r_{\eps})}{4}-\tanh^2\frac{u(R)}{4}\right)+\eps\mathcal{O}(1)(R-r_{\eps}).
\end{align}
Combining (\ref{chchchha-5}) with (\ref{chchchha-3})--(\ref{chchchha-6}) and passing a calculation directly, one finds
\begin{align*}
&\left(1-\eps\left(\frac{2N}{R}\sinh^2\frac{b}{4}+\sqrt{\eps}\mathcal{O}(1)\right)\right)\frac{R-r_{\eps}}{\eps}\\
=&\left(1+\frac{N-1}{2R}\right)\log\left|\frac{\tanh\frac{u(R)}{4}}{\tanh\frac{u(r_{\eps})}{4}}\right|+\frac{N-1}{4R}\left(\tanh^2\frac{u(r_{\eps})}{4}-\tanh^2\frac{u(R)}{4}\right).
\end{align*}
After making appropriate manipulations, we obtain
\begin{equation}\label{ch0812-2020}
    \begin{aligned}
\frac{R-r_{\eps}}{\eps^2}=&\left(\frac{1}{\eps}+\frac{2N}{R}\sinh^2\frac{b}{4}\right)\\
&\times\Bigg[\left(1+\frac{N-1}{2R}\right)\log\left|\frac{\tanh\frac{u(R)}{4}}{\tanh\frac{u(r_{\eps})}{4}}\right|+\frac{N-1}{4R}\left(\tanh^2\frac{u(r_{\eps})}{4}-\tanh^2\frac{u(R)}{4}\right)\Bigg]+\sqrt{\eps}\mathcal{O}(1).
\end{aligned}
\end{equation}
Here we have used $\displaystyle\liminf_{\eps\downarrow0}u(r_{\eps})>0$ again to get the refined asymptotics of 
$(R-r_{\eps})/\eps$.
Therefore, we prove (\ref{new-r-eps0305})
and completes the proof of Lemma~\ref{lem-1119-9011}.
\end{proof}
  
Note that by Proposition~\ref{prop-ceps}
and Lemma~\ref{asy-radial-lem}, there exists ${r}_{\eps}^{k;\widetilde{k}}\stackrel{\mathrm{as}\,\,\eps\downarrow0}{-\!\!\!-\!\!\!-\!\!\!-\!\!\!\longrightarrow}{R}$
with $\displaystyle\limsup_{\eps\downarrow0}\frac{R-{r}_{\eps}^{k;\widetilde{k}}}{\eps}<\infty$ such that
\begin{equation}\label{0310-2018}
{u}({r}_{\eps}^{k;\widetilde{k}})=k+\Big(\frac{\widetilde{k}}{R}+o_{\eps}(1)\Big)\eps\leq{u}(R)\,\,\mathrm{as}\,\,0<\eps\ll1,
\end{equation}
where $k$ and $\widetilde{k}$ are real numbers independent of $\eps$ and satisfy
one of the following conditions:
\begin{align}\label{mainth1-id8}
\begin{cases}
(a)\,\,0<k<b\,\,\mathrm{and}\,\,\widetilde{k}\in\mathbb{R};\\
(b)\,\,k=b\,\,\mathrm{and}\,\,\widetilde{k}\leq\frac{2\gamma\left(N\cosh^2\frac{b}{2}-1\right)\tanh\frac{b}{4}}{\gamma\cosh\frac{b}{2}+1}.
\end{cases}
\end{align}
Here the second order term having the order $\eps$ is a natural consideration
due to the rigorous derivation of $u(R)$ and $u'(R)$ in Proposition~\ref{prop-ceps}.
Since ${r}_{\eps}^{k;\widetilde{k}}$ may depend on $k$ and $\widetilde{k}$,
 we shall establish asymptotics of ${r}_{\eps}^{k;\widetilde{k}}$
such that
\begin{align}\label{mainth1-id9}
\llang{u}(r_{\eps}^{k;\widetilde{k}})\rrang=k+\frac{\widetilde{k}}{R}\eps\quad\mathrm{as}\,\,0<\eps\ll1. 
\end{align}
More precisely, 
for $r_{\eps}^{k;\widetilde{k}}\in\pmb{\mathbb{B}_{\partial}^{\eps}}$ admiting (\ref{mainth1-id9}), 
asymptotics of $\llang(R-r_{\eps}^{k;\widetilde{k}})/\eps\rrang$
and $\llang{u}'(r_{\eps}^{k;\widetilde{k}})\rrang$
are uniquely determined by $k$ and $\widetilde{k}$,
which can be precisely depicted as follows.

\begin{lemma}\label{mainthm-2}
Under the same hypotheses as in Theorem~\ref{new-cor},
if $r_{\eps}^{k;\widetilde{k}}\in\pmb{\mathbb{B}_{\partial}^{\eps}}$ satisfies (\ref{mainth1-id9}) as $0<\eps\ll1$, then
\begin{align}\label{mainth1-id10}
\llang{u}'(r_{\eps}^{k;\widetilde{k}})\rrang=\frac{2}{\eps}\sinh\frac{k}{2}
-\frac{1}{R}&\left(4N\sinh\frac{k}{2}\sinh^2\frac{b}{4}+2(N-1)\tanh\frac{k}{4}-\widetilde{k}\cosh\frac{k}{2}\right).
\end{align}
Moreover, the exact leading order term $\mathcal{O}(1)$ and the second order term $\eps\mathcal{O}(1)$ of $\llang(R-r_{\eps}^{k;\widetilde{k}})/{\eps}\rrang$ is described by 
\begin{align}\label{2018-0210}
\llang\frac{R-r_{\eps}^{k;\widetilde{k}}}{\eps}\rrang=\mathcal{A}^{k}+\frac{\eps}{2R}\left(4N\mathcal{A}^{k}\sinh^2\frac{b}{4}+\mathcal{B}^{k;\widetilde{k}}\right),
\end{align}
where
\begin{align}
\mathcal{A}^{k}=&\,\left(1+\frac{N-1}{2R}\right)\log\frac{\tanh\frac{b}{4}}{\tanh\frac{k}{4}}+\frac{N-1}{4R}\left(\tanh^2\frac{k}{4}-\tanh^2\frac{b}{4}\right),\label{mathcal-A}
\end{align}
and
\begin{align}\label{mathcal-B}
\mathcal{B}^{k;\widetilde{k}}=&\,\frac{\gamma(N\cosh^2\frac{b}{2}-1)\sech^2\frac{b}{4}}{\gamma\cosh\frac{b}{2}+1}\left(1+\frac{N-1}{2R}\sech^2\frac{b}{4}\right)-\frac{\widetilde{k}}{\sinh\frac{k}{2}}
\left(1+\frac{N-1}{2R}\sech^2\frac{k}{4}\right).
\end{align}
\end{lemma}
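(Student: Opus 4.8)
The plan is to feed the two asymptotic identities of Lemma~\ref{lem-1119-9011} — the Dirichlet-to-Neumann-type relation (\ref{1119-9011-id}) for $u'(r_\eps)$ and the quadrature identity (\ref{new-r-eps0305}) for $(R-r_\eps)/\eps^2$ — with the prescribed first-two-term expansion of $u$ at $r_\eps^{k;\widetilde k}$ and with the boundary asymptotics of Proposition~\ref{prop-ceps}, and then to Taylor-expand every hyperbolic function to first order in $\eps$. Concretely, by hypothesis (\ref{mainth1-id9}) (equivalently (\ref{0310-2018})) we may write $u(r_\eps^{k;\widetilde k}) = k + \tfrac{\widetilde k}{R}\eps + o(\eps)$, and by (\ref{mainth1-id6})--(\ref{11-0305}) we have $u(R) = b + \tfrac{2\eps}{R}\cdot\tfrac{\gamma(N\cosh^2\frac b2-1)\tanh\frac b4}{\gamma\cosh\frac b2+1} + O(\eps^{3/2})$; since $k\in(0,b]$ forces $\liminf_{\eps\downarrow0}u(r_\eps^{k;\widetilde k}) = k>0$, the point $r_\eps^{k;\widetilde k}$ genuinely falls under the hypotheses of Lemma~\ref{lem-1119-9011} and of Theorem~\ref{bdylayer-exist-thm}.

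For (\ref{mainth1-id10}) I would substitute $u(r_\eps^{k;\widetilde k}) = k + \tfrac{\widetilde k}{R}\eps + o(\eps)$ into (\ref{1119-9011-id}), using $\sinh\tfrac{u(r_\eps^{k;\widetilde k})}{2} = \sinh\tfrac k2 + \tfrac{\widetilde k}{2R}\cosh\tfrac k2\,\eps + o(\eps)$ and $\sech^2\tfrac{u(r_\eps^{k;\widetilde k})}{4} = \sech^2\tfrac k4 + O(\eps)$. Expanding the product, the $\eps^{-1}$-term is $\tfrac2\eps\sinh\tfrac k2$, and the collection of the $\mathcal O(1)$-contributions, after the elementary simplification $\sinh\tfrac k2\sech^2\tfrac k4 = 2\tanh\tfrac k4$, is exactly $-\tfrac1R\big(4N\sinh\tfrac k2\sinh^2\tfrac b4 + 2(N-1)\tanh\tfrac k4 - \widetilde k\cosh\tfrac k2\big)$; the leftover error is the $O(\sqrt\eps)$ of (\ref{1119-9011-id}) plus the $o(1)$ coming from the $o(\eps)$-truncation of $u(r_\eps^{k;\widetilde k})$, i.e.\ $o(1)$, which identifies the first two terms and yields (\ref{mainth1-id10}).

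For (\ref{2018-0210}) I would substitute both expansions into the bracket appearing in (\ref{new-r-eps0305}) and linearize, using $\tfrac{d}{dx}\log\tanh\tfrac x4 = \tfrac{1}{2\sinh(x/2)}$ and $\tfrac{d}{dx}\tanh^2\tfrac x4 = \tfrac12\tanh\tfrac x4\sech^2\tfrac x4$ to expand $\log\big(\tanh\tfrac{u(R)}{4}/\tanh\tfrac{u(r_\eps^{k;\widetilde k})}{4}\big)$ and $\tanh^2\tfrac{u(r_\eps^{k;\widetilde k})}{4} - \tanh^2\tfrac{u(R)}{4}$ about $b$ and $k$; since $u(R)-b$ and $u(r_\eps^{k;\widetilde k})-k$ are $O(\eps)$, the linearization errors are $O(\eps^2)$, so the bracket equals $\mathcal A^k + \tfrac{\eps}{2R}\mathcal B^{k;\widetilde k} + o(\eps)$. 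Multiplying by the prefactor $\tfrac1\eps + \tfrac{2N}{R}\sinh^2\tfrac b4$ of (\ref{new-r-eps0305}) and reading off the $\eps^0$- and $\eps^1$-coefficients gives $\tfrac{R-r_\eps^{k;\widetilde k}}{\eps} = \mathcal A^k + \tfrac{\eps}{2R}\big(4N\mathcal A^k\sinh^2\tfrac b4 + \mathcal B^{k;\widetilde k}\big) + o(\eps)$, which is (\ref{2018-0210}); here the $O(\sqrt\eps)$ error of (\ref{new-r-eps0305}) becomes $O(\eps^{3/2}) = o(\eps)$ after multiplying through by $\eps$.

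The hard part will be the bookkeeping in the last step: one must keep separate the $O(\eps)$ correction produced by $u(R)-b$ (this is where the constant $\tfrac{\gamma(N\cosh^2\frac b2-1)}{\gamma\cosh\frac b2+1}$, carrying the $\gamma$- and domain-size information, enters through Proposition~\ref{prop-ceps}) from the $O(\eps)$ correction produced by $u(r_\eps^{k;\widetilde k})-k = \tfrac{\widetilde k}{R}\eps + o(\eps)$, and then verify that the two $\tanh^2$-contributions merge with the two $\log$-contributions into the compact form (\ref{mathcal-B}). The algebraic identities that make this collapse happen are $\tfrac{\tanh\frac x4}{\sinh(x/2)} = \tfrac12\sech^2\tfrac x4$ and $\tfrac12\tanh\tfrac x4\sech^2\tfrac x4\cdot\sinh\tfrac x2 = \tanh^2\tfrac x4$, which convert the $\tanh^2$-derivative term precisely into the factors $1+\tfrac{N-1}{2R}\sech^2\tfrac b4$ and $1+\tfrac{N-1}{2R}\sech^2\tfrac k4$ appearing in $\mathcal B^{k;\widetilde k}$. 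Everything else is routine once the two identities of Lemma~\ref{lem-1119-9011} are available.
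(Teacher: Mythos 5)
Your proposal is correct and follows the same route as the paper: both parts plug the prescribed expansions of $u(r_\eps^{k;\widetilde k})$ and $u(R)$ (the latter from Proposition~\ref{prop-ceps}) into the two identities of Lemma~\ref{lem-1119-9011}, and both rely on the identical hyperbolic linearizations (the paper records them in the equivalent forms $\tanh(\alpha+\beta\eps)\approx\tanh\alpha+\beta\sech^2\alpha\,\eps$ and $\log\tanh(\alpha+\beta\eps)\approx\log\tanh\alpha+\tfrac{2\beta}{\sinh 2\alpha}\eps$) to collapse the $\eps$-terms into $\mathcal B^{k;\widetilde k}$. Your observation that $k>0$ is automatic from Theorem~\ref{bdylayer-exist-thm}, and the factorization identities you invoke reproduce exactly the paper's reduction of $J_3$ to $-\tfrac{1}{\sinh(k/2)}\bigl(1+\tfrac{N-1}{2R}\sech^2\tfrac k4\bigr)$, so the bookkeeping you flag as the hard part is precisely the step the paper carries out in its display leading to (\ref{2018-0210}).
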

\begin{proof} 
We calculate asymptotics of ${u}'(r_{\eps}^{k;\widetilde{k}})$
and $(R-r_{\eps}^{k;\widetilde{k}})/\eps$ as follows, respectively.
\begin{itemize}
 \item[\textbf{(b1).}]\,\,Plug (\ref{0310-2018}) into (\ref{1119-9011-id}), 
\end{itemize} 
\begin{align*}
&u'(r_{\eps}^{k;\widetilde{k}})\\
=&\,2\left(\sinh\frac{k+\left(\frac{\widetilde{k}}{R}+o_{\eps}(1)\right)\eps}{2}\right)\left(\frac{1}{\eps}-\frac{2N}{R}\sinh^2\frac{b}{4}-\frac{N-1}{2R}\sech^{2}\frac{k+\left(\frac{\widetilde{k}}{R}+o_{\eps}(1)\right)\eps}{4}+o_{\eps}(1)\right)\\
=&\left(2\sinh\frac{k}{2}+\frac{\widetilde{k}}{R}\eps\left(\cosh\frac{k}{2}+o_{\eps}(1)\right)\right)\left[\frac{1}{\eps}-\frac{2N}{R}\sinh^2\frac{b}{4}-\frac{N-1}{2R}\left(1-\frac{\widetilde{k}}{2R}\eps\tanh\frac{k}{4}\right)\sech^{2}\frac{k}{4}+o_{\eps}(1)\right]\\
=&\,\frac{2}{\eps}\sinh\frac{k}{2}\!-\!\frac{1}{R}\left(4N\sinh\frac{k}{2}\sinh^2\frac{b}{4}+2(N-1)\tanh\frac{k}{4}-\widetilde{k}\cosh\frac{k}{2}\right)\!+\!o_{\eps}(1).
\end{align*}

Here we have used similar approximations as (\ref{0308-a1}) and (\ref{0308-a3})
to deal with the asymptotics of $u'(r_{\eps}^{k;\widetilde{k}})$.
\begin{itemize}
 \item[\textbf{(b2).}]\,\,Putting (\ref{0310-2018}) into (\ref{new-r-eps0305})
and using asymptotics of $u(R)$ (see (\ref{mainth1-id6})), one can check that
\end{itemize}
{
\begin{align}\label{crucial0308}
&\frac{R-r_{\eps}}{\eps}\notag\\
=&\left(1+\eps\cdot\frac{2N}{R}\sinh^2\frac{b}{4}\right)\Bigg\{\left(1+\frac{N-1}{2R}\right)\log\frac{\tanh\left(\frac{b}{4}+\frac{\eps\gamma\left(N\cosh^2\frac{b}{2}-1\right)\tanh\frac{b}{4}}{2R(\gamma\cosh\frac{b}{2}+1)}+\eps{o}_{\eps}(1)\right)}{\tanh\left(\frac{k}{4}+\frac{\eps\widetilde{k}}{4R}+\eps{o}_{\eps}(1)\right)}\notag\\
&\hspace{6pt}+\frac{N-1}{4R}\left[\tanh^2\left(\frac{k}{4}+\frac{\eps\widetilde{k}}{4R}+\eps{o}_{\eps}(1)\right)-\tanh^2\left(\frac{b}{4}+\frac{\eps\gamma\left(N\cosh^2\frac{b}{2}-1\right)\tanh\frac{b}{4}}{2R(\gamma\cosh\frac{b}{2}+1)}+\eps{o}_{\eps}(1)\right)\right]\Bigg\}+\eps^{3/2}\mathcal{O}(1)\notag\\
=&\left(1+\eps\cdot\frac{2N}{R}\sinh^2\frac{b}{4}\right)\Bigg\{\underbrace{\left(1+\frac{N-1}{2R}\right)
\log\frac{\tanh\frac{b}{4}}{\tanh\frac{k}{4}}+\frac{N-1}{4R}\left(\tanh^2\frac{k}{4}-\tanh^2\frac{b}{4}\right)}_{:=\mathcal{A}^k\,\,(\mathrm{see\,\,(\ref{mathcal-A})})}\notag\\
&\hspace{20pt}+\frac{\eps}{2R}\Bigg[\left(1+\frac{N-1}{2R}\right)\Bigg(\underbrace{\frac{2\gamma(N\cosh^2\frac{b}{2}-1)\tanh\frac{b}{4}}{(\gamma\cosh\frac{b}{2}+1)\sinh\frac{b}{2}}}_{:=I_1}-\underbrace{\frac{\widetilde{k}}{\sinh\frac{k}{2}}}_{:=J_1}\Bigg)\\
&\hspace{50pt}+\frac{N-1}{4R}\Bigg(\underbrace{\frac{\widetilde{k}\tanh\frac{k}{4}}{\cosh^2\frac{k}{4}}}_{:=J_2}-\underbrace{\frac{2\gamma(N\cosh^2\frac{b}{2}-1)\tanh^2\frac{b}{4}}{(\gamma\cosh\frac{b}{2}+1)\cosh^2\frac{b}{4}}}_{:=I_2}\Bigg)\Bigg]\Bigg\}+\eps{o}_{\eps}(1)\notag\\
=&\mathcal{A}^k+\frac{\eps}{2R}\Bigg\{4N\mathcal{A}^k\sinh^2\frac{b}{4}+\underbrace{\frac{\gamma(N\cosh^2\frac{b}{2}-1)\sech^2\frac{b}{4}}{\gamma\cosh\frac{b}{2}+1}\left[\left(1+\frac{N-1}{2R}\right)-\frac{N-1}{2R}\tanh^2\frac{b}{4}\right]}_{:=I_3=\left(1+\frac{N-1}{2R}\right)I_1-\frac{N-1}{4R}I_2}\notag\\
&\hspace{50pt}+\underbrace{\widetilde{k}\left[-\left(1+\frac{N-1}{2R}\right)\frac{1}{\sinh\frac{k}{2}}+\frac{N-1}{4R}\tanh\frac{k}{4}\sech^2\frac{k}{4}\right]}_{:=J_3=-\left(1+\frac{N-1}{2R}\right)J_1+\frac{N-1}{4R}J_2}\Bigg\}+\eps{o}_{\eps}(1)\notag\\
=&\mathcal{A}^k+\frac{\eps}{2R}\Bigg(4N\mathcal{A}^k\sinh^2\frac{b}{4}+\mathcal{B}^{k;\widetilde{k}}\Bigg)+\eps{o}_{\eps}(1),\notag
\end{align}}
where (cf. (\ref{mathcal-B}))
\begin{align*}
\mathcal{B}^{k;\widetilde{k}}:=&\,I_3+J_3=\frac{\gamma(N\cosh^2\frac{b}{2}-1)\sech^2\frac{b}{4}}{\gamma\cosh\frac{b}{2}+1}\left(1+\frac{N-1}{2R}\sech^2\frac{b}{4}\right)-\frac{\widetilde{k}}{\sinh\frac{k}{2}}
\left(1+\frac{N-1}{2R}\sech^2\frac{k}{4}\right),
\end{align*}
which is obtained from the identity 
\begin{align*}
J_3=-\left(1+\frac{N-1}{2R}\right)\frac{1}{\sinh\frac{k}{2}}+\frac{N-1}{4R}\tanh\frac{k}{4}\sech^2\frac{k}{4}=-\frac{1}{\sinh\frac{k}{2}}\left(1+\frac{N-1}{2R}\sech^2\frac{k}{4}\right).
\end{align*}
For the second equality of (\ref{crucial0308}), we have applied
some elementary approximations
\begin{equation*}
 \tanh(\alpha+(\beta+o_{\eps}(1))\eps)\approx\tanh\alpha+\big(\beta\sech^2\alpha+o_{\eps}(1)\big)\eps
\end{equation*}
and 
\begin{equation*}
\log\tanh(\alpha+(\beta+o_{\eps}(1))\eps)\approx\log\tanh\alpha+\Big(\frac{2\beta}{\sinh2\alpha}+o_{\eps}(1)\Big)\eps. 
\end{equation*}
Therefore, we prove (\ref{mainth1-id10}) and (\ref{2018-0210}) and complete the proof of Lemma~\ref{mainthm-2}.
\end{proof}

Now we are in a position to prove Theorem~\ref{new-cor}.
\begin{proof}[Proof of Theorem~\ref{new-cor}] 
Let
\begin{align}\label{newlet0317}
\overline{r^{\eps}_{\mathtt{p};\mathtt{q}}}=R\left(1-\frac{\mathtt{p}}{R}\eps-\frac{\mathtt{q}}{R^2}\eps^2\right)\in\pmb{\mathbb{B}_{\partial}^{\eps}}.
\end{align}
Then for any $r^{\eps}_{\mathtt{p};\mathtt{q}}\in\pmb{\mathbb{B}_{\partial}^{\eps}}$, we have 
\begin{equation}\label{0328-2018-haha}
\lim_{\eps\downarrow0}\eps^{-2}(r^{\eps}_{\mathtt{p};\mathtt{q}}-\overline{r^{\eps}_{\mathtt{p};\mathtt{q}}})=0.
\end{equation}
 Since $u$ and $\eps{u'}$
are uniformly bounded to $\eps$ (by (\ref{gradesti})), together with (\ref{0328-2018-haha})
one immediately finds $\llang{u}(r^{\eps}_{\mathtt{p};\mathtt{q}})\rrang=\llang{u}(\overline{r^{\eps}_{\mathtt{p};\mathtt{q}}})\rrang$ and $\llang{u}'(r^{\eps}_{\mathtt{p};\mathtt{q}})\rrang=\llang{u}'(\overline{r^{\eps}_{\mathtt{p};\mathtt{q}}})\rrang$
as $0<\eps\ll1$.
 Hence, to prove Theorem~\ref{new-cor},
it suffices to establish asymptotics of $\llang{u}(\overline{r^{\eps}_{\mathtt{p};\mathtt{q}}})\rrang$ and $\llang{u}'(\overline{r^{\eps}_{\mathtt{p};\mathtt{q}}})\rrang$.

Regarding $\mathcal{A}^{k}$ (defined in (\ref{mathcal-A})) as a function of $k$ in $(0,b]$, one may check 
 that $\displaystyle\lim_{k\downarrow0}\mathcal{A}^{k}=\infty$, $\mathcal{A}^{b}=0$
and $\mathcal{A}^{k}$  
 is strictly decreasing to $k$ in $(0,b)$. 
Note also that $\mathcal{B}^{k;\widetilde{k}}$ (defined in (\ref{mathcal-B})) is a linear function of $\widetilde{k}$.
Hence by (\ref{2018-0210})--(\ref{mathcal-B}), we obtain that for any $\mathtt{p}\geq0$ and $\mathtt{q}\in\mathbb{R}$
 there uniquely exist $k=k(\mathtt{p})$ and $\widetilde{k}=\widetilde{k}(\mathtt{p},\mathtt{q})$
satisfying (\ref{mainth1-id8}) such that (\ref{0308-afr}) and (\ref{mainth1-id9}) hold, i.e.,
\begin{equation}\label{kandwk}
 \mathcal{A}^{k(\mathtt{p})}=\mathtt{p}\quad\mathrm{and}\quad\frac{1}{2}\left(4N\mathtt{p}\sinh^2\frac{b}{4}+\mathcal{B}^{k(\mathtt{p});\widetilde{k}(\mathtt{p},\mathtt{q})}\right)=\mathtt{q}
\end{equation}
and
\begin{align}\label{u-kandwk}
\llang{u}(r_{\eps}^{k(\mathtt{p});\widetilde{k}(\mathtt{p},\mathtt{q})})\rrang=k(\mathtt{p})+\frac{\eps}{R}\widetilde{k}(\mathtt{p},\mathtt{q})\quad\mathrm{as}\,\,0<\eps\ll1. 
\end{align} 
From the second equation of (\ref{kandwk}), one gets
\begin{align}\label{0308-wk}
\widetilde{k}(\mathtt{p},\mathtt{q})=\mathcal{H}^{\gamma;b}_{\mathtt{p};\mathtt{q}}\sinh\frac{k(\mathtt{p})}{2},
\end{align}
where $\mathcal{H}^{\gamma;b}_{\mathtt{p};\mathtt{q}}$ is defined in (\ref{mathcal-H}).
Moreover, by (\ref{newlet0317})--(\ref{kandwk}) and (\ref{u-kandwk})--(\ref{0308-wk}) we have 
\begin{align*}
\llang\frac{R-{r}_{\eps}^{k(\mathtt{p});\widetilde{k}(\mathtt{p},\mathtt{q})}}{\eps}\rrang=\llang\frac{R-\overline{r^{\eps}_{\mathtt{p};\mathtt{q}}}}{\eps}\rrang
\end{align*}
and
\begin{align*}
\llang{u}(r_{\eps}^{k(\mathtt{p});\widetilde{k}(\mathtt{p},\mathtt{q})})\rrang=k(\mathtt{p})+\frac{\eps}{R}\mathcal{H}^{\gamma;b}_{\mathtt{p};\mathtt{q}}\sinh\frac{k(\mathtt{p})}{2}.
\end{align*}
As a consequence, $\overline{r^{\eps}_{\mathtt{p};\mathtt{q}}}-r_{\eps}^{k(\mathtt{p});\widetilde{k}(\mathtt{p},\mathtt{q})}=\eps^2{o}_{\eps}(1)$ and 
\begin{align}\label{0317-910}
{u}(\overline{r^{\eps}_{\mathtt{p};\mathtt{q}}})=&{u}(r_{\eps}^{k(\mathtt{p});\widetilde{k}(\mathtt{p},\mathtt{q})})+u'(\theta^{\eps})(\overline{r^{\eps}_{\mathtt{p};\mathtt{q}}}-r_{\eps}^{k(\mathtt{p});\widetilde{k}(\mathtt{p},\mathtt{q})})\notag\\[-0.7em]
&\\[-0.7em]
=&k(\mathtt{p})+\frac{\eps}{R}\left(\mathcal{H}^{\gamma;b}_{\mathtt{p};\mathtt{q}}\sinh\frac{k(\mathtt{p})}{2}+{o}_{\eps}(1)\right),\notag
\end{align}
where $\theta^{\eps}$ lies between $\overline{r^{\eps}_{\mathtt{p};\mathtt{q}}}$ and $r_{\eps}^{k(\mathtt{p});\widetilde{k}(\mathtt{p},\mathtt{q})}$. Here we have used (\ref{gradesti}) to assert 
\begin{equation*}
u'(\theta^{\eps})\left(\overline{r^{\eps}_{\mathtt{p};\mathtt{q}}}-r_{\eps}^{k(\mathtt{p});\widetilde{k}(\mathtt{p},\mathtt{q})}\right)=\eps{o}_{\eps}(1).
\end{equation*}
Hence, (\ref{u-0d}) follows from (\ref{0317-910}) and $\llang{u}(\overline{r^{\eps}_{\mathtt{p};\mathtt{q}}})\rrang=\llang{u}(r^{\eps}_{\mathtt{p};\mathtt{q}})\rrang$.

Comparing (\ref{mainth1-id9}) to the first two order terms of (\ref{u-0d}),
we shall put $k=k(\mathtt{p})$ and $\widetilde{k}=\frac{\mathcal{H}^{\gamma;b}_{\mathtt{p};\mathtt{q}}}{R}\sinh\frac{k(\mathtt{p})}{2}$ into (\ref{mainth1-id10}), and therefore obtain
\begin{align}
u'(r^{\eps}_{\mathtt{p};\mathtt{q}})
=\frac{2}{\eps}\sinh\frac{k(\mathtt{p})}{2}-\frac{1}{R}\Bigg(&4N\sinh^2\frac{b}{4}\sinh\frac{k(\mathtt{p})}{2}\notag\\
&+2(N-1)\tanh\frac{k(\mathtt{p})}{4}-\frac{1}{2}\mathcal{H}^{\gamma;b}_{\mathtt{p};\mathtt{q}}\sinh{k(\mathtt{p})}\Bigg)+o_{\eps}(1).\notag
\end{align}
Therefore, we get (\ref{u-1std}).

Finally, we need to check their uniform convergence when $\mathtt{p}$ is located in a bounded interval $[0,\mathtt{p}^*]$. By
(\ref{0308-afr}), for any $\mathtt{q}\in\mathbb{R}$ 
 one finds $\ds\lim_{\eps\downarrow0}\sup_{\mathtt{p}\in[0,\mathtt{p}^*]}\left(R-r^{\eps}_{\mathtt{p};\mathtt{q}}\right)/\eps\leq\mathtt{p}^*$. Along with Theorem~\ref{bdylayer-exist-thm},
 we have
\begin{align*}
 \lim_{\eps\downarrow0}\inf_{\mathtt{p}\in[0,\mathtt{p}^*]}\sinh{u}(r^{\eps}_{\mathtt{p};\mathtt{q}})
\geq\liminf_{\eps\downarrow0}\sinh{u}(r^{\eps}_{\mathtt{p}^*;\mathtt{q}})>0. 
\end{align*}
As a consequence, $\sup_{\mathtt{p}\in[0,\mathtt{p}^*]}\frac{1}{\sinh{u}(r^{\eps}_{\mathtt{p};\mathtt{q}})}$ is uniformly bounded to $\eps$ as $0<\eps\ll1$, and all arguments involving the pointwise estimates of 
$u(r^{\eps}_{\mathtt{p};\mathtt{q}})$ and $u'(r^{\eps}_{\mathtt{p};\mathtt{q}})$ 
can be improved so that the convergence (\ref{0403-2018}) 
is uniformly in $[0,\mathtt{p}^*]\times\mathbb{R}$.
This completes the proof of Theorem~\ref{new-cor}.
\end{proof}

\subsection{Comparison of nonlocal and standard elliptic sinh--Gordon equations}\label{sec-comparison}
\noindent

 For (\ref{eq2})--(\ref{bd2}), recall the nonlocal coefficient $\pmb{\mathtt{C}}(u)\sim1$ as $0<\eps\ll1$.
Hence, as $\eps\downarrow0$, $u$ formally approaches the solution $v$ of the standard elliptic sinh--Gordon equation
\begin{align}
\eps^2\left(v''(r)+\frac{N-1}{r}v'(r)\right)\,=\,\sinh{v},&\quad{r}\in(0,R),\label{v-eqn}\\
v'(0)=0,\quad{v}(R)+{\gamma\eps}v'(R)={a}_0,&\label{v-bdy}
\end{align}
where the condition of $\gamma$ and $a_0$ are same as that in (\ref{bd2}).
For the sake of completeness, we shall compare the pointwise asymptotics of $u$ and $v$ in
the whole domain~$[0,R]$.

 Following the same argument of Lemma~\ref{asy-radial-lem}, it is easy to obtain
\begin{align}
|(u-v)(r)|+\eps|(u-v)'(r)|\lesssim{e}^{-\frac{1}{8\eps}(\cosh{a}_0)^{-1/2}(R-r)},\quad{r}\in[0,R].\notag
\end{align}
However, as $0<\eps\ll1$,
 $u$ and $v$ have different asymptotic behavior near the boundary
since $\pmb{\mathtt{C}}(u)$ is not identically equal to $1$.
Alternatively, note that making the following replacements in (\ref{eq2})--(\ref{bd2}): 
\begin{align}
&\boxed{\displaystyle\eps\mapsto\,{\eps}{\sqrt{\pmb{\mathtt{C}}(u)}}\,\,=\!=\!=\!=\!=\displaystyle\eps-\frac{N}{R}\left(\cosh\frac{b}{2}-1\right)\eps^2+{o}_{\eps}(1)\eps^2}\notag\\[-1.55em]
&\quad\quad\quad\quad\quad\quad\quad\,\,\,\Uparrow\notag\\[-0.8em]
&\quad\quad\quad\quad\quad\quad\,\,\,\,{}_{\mathrm{by\,\,(\ref{ceps-1})}}\quad\quad\quad\quad\quad\quad\quad\quad\quad\quad\quad\quad\quad\quad\quad\,\,,\notag\\[-0.3em]
&\quad\quad\quad\quad\quad\quad\quad\,\,\,\Downarrow\notag\\[-1.55em]
&\boxed{\displaystyle\gamma\mapsto\,\frac{\gamma}{\sqrt{\pmb{\mathtt{C}}(u)}}\,\,=\!=\!=\!=\!=\,\gamma+\displaystyle\frac{N}{R}\gamma\left(\cosh\frac{b}{2}-1\right)\eps+{o}_{\eps}(1)\eps}\notag
\end{align}
one can transform (\ref{eq2}) and (\ref{bd2}) into (\ref{v-eqn}) and (\ref{v-bdy}), respectively.
 Accordingly, it is expected that asymptotic expansions of $u(R)$ and $v(R)$ with respect to $\eps$
have different second order terms. To see the difference,
we can use the same arguments in Sections~\ref{sec-dtn} and \ref{sec-pf-0315-2018}--\ref{sec-mainthm-2}
to get the asymptotics of $v$ and $v'$ in $[0,R]-\pmb{\mathbb{B}_{\partial}^{\eps}}$
and $\pmb{\mathbb{B}_{\partial}^{\eps}}$, respectively. 
The following lemmas can be proved following the same arguments in Lemmas~\ref{lem-1119-9011} and \ref{mainthm-2}
so we omit the proof. The reader can compare these results (of $v$) with
Lemmas~\ref{lem-1119-9011} and \ref{mainthm-2} (of $u$) directly. 

\begin{figure}[htp]
\centering{%
\begin{tabular}{@{\hspace{-0pc}}c@{\hspace{-0pc}}c}
 \psfig{figure=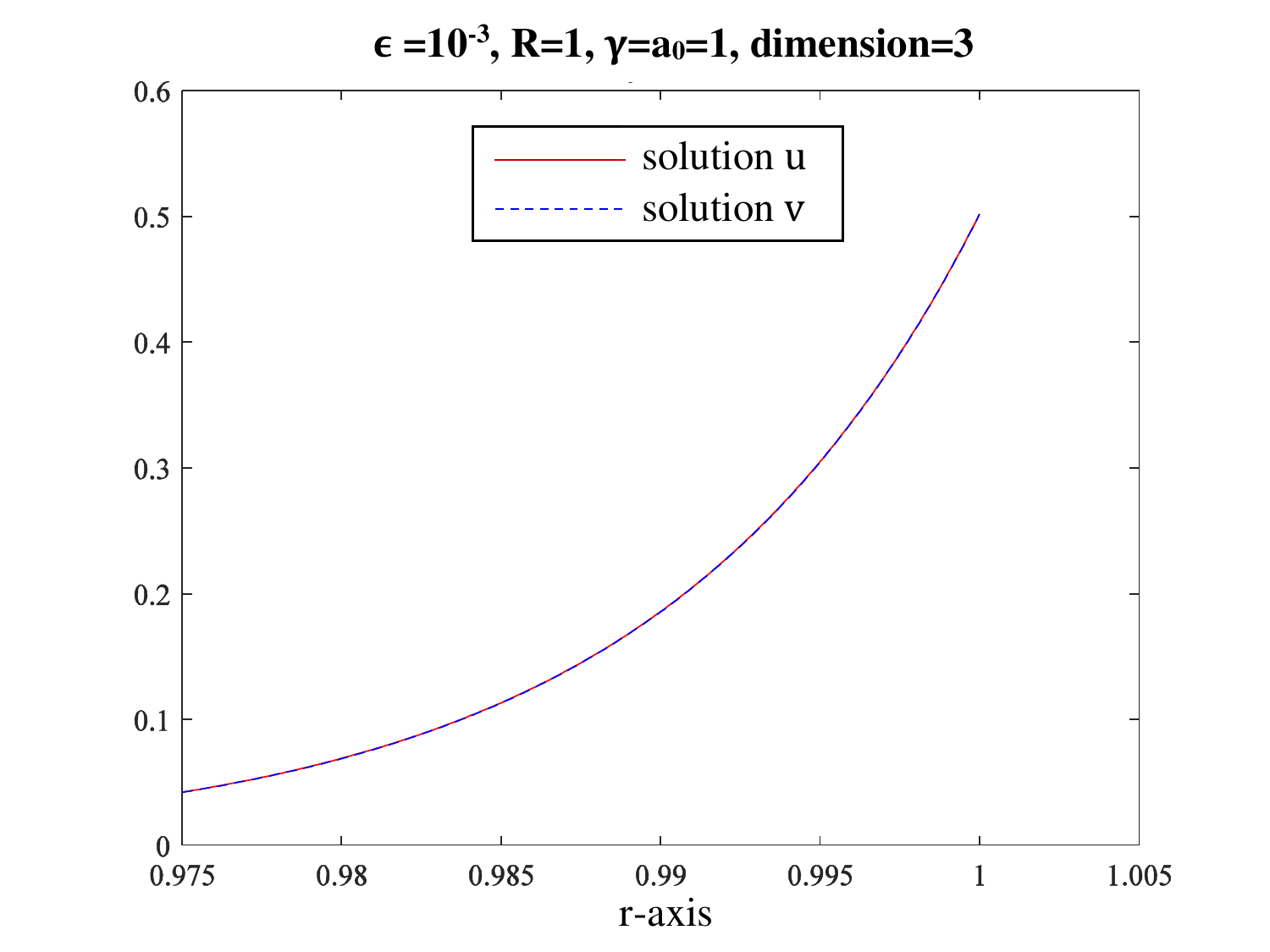, width=10cm}
  \end{tabular}
	}
	\caption{\em Numerical solutions $u$ of (\ref{eq2})--(\ref{bd2}) and $v$ of (\ref{v-eqn})--(\ref{v-bdy})
	with $\eps=10^{-3}$ near the boundary $R=1$.\label{chlee04072018}}
\end{figure}

\begin{lemma}\label{v-lem1}
For $\eps>0$, let $v$ be the unique classical solution of (\ref{v-eqn})--(\ref{v-bdy}), where $a_0$ and $\gamma$ are positive constants independent of $\eps$. Then $v$ and $v'$ are strictly positive in $(0,R]$,
and $v''(r)\geq0$ for $r\in(0,R]$. In addition,
for $r_{\eps}\in\pmb{\mathbb{B}_{\partial}^{\eps}}$, as $0<\eps\ll1$ there hold 
\begin{align}
\left|v'(r_{\eps})-2\sinh\frac{v(r_{\eps})}{2}\left(\frac{1}{\eps}-\frac{N-1}{2R}\sech^2\frac{v(r_{\eps})}{4}\right)\right|\lesssim\sqrt{\eps}\notag
\end{align}
and
{\small
\begin{align}
&\Bigg|\frac{R-r_{\eps}}{\eps}-
\Bigg[\left(1+\frac{N-1}{2R}\right)\log\left|\frac{\tanh\frac{v(R)}{4}}{\tanh\frac{v(r_{\eps})}{4}}\right|+\frac{N-1}{4R}\left(\tanh^2\frac{v(r_{\eps})}{4}-\tanh^2\frac{v(R)}{4}\right)\Bigg]\Bigg|
\lesssim\eps^{3/2}.\notag
\end{align}}
\end{lemma}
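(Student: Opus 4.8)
The plan is to mirror the proof architecture already established for the nonlocal equation in Lemmas~\ref{asy-radial-lem}, \ref{cruc-in-thm}, \ref{dirac-lem}, \ref{lem-1119-9011} and \ref{mainthm-2}, specialized to the case $\pmb{\mathtt{C}}\equiv1$, which in fact \emph{simplifies} many of the estimates since there is no nonlocal coefficient whose asymptotics must be coupled to those of $v$. First I would record the basic structural facts for $v$: since $\sinh{v}$ is strictly increasing and $a_0>0$, the maximum principle gives $0\le v(r)\le a_0$ on $(0,R]$, and the unique continuation argument of Lemma~\ref{asy-radial-lem}(i) carries over verbatim (differentiating \eqref{v-eqn} in $r$ and using $r^{N-1}$-weights) to yield $v>0$, $v'>0$, $v''\ge0$ on $(0,R]$. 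The exponential interior decay $|v(r)|+\gamma\eps|v'(r)|\lesssim e^{-\frac{1}{8\eps}(\cosh a_0)^{-1/2}(R-r)}$ follows by the same Riccati/comparison computation as in Lemma~\ref{asy-radial-lem}(ii), since \eqref{v-eqn} obeys $\eps^2(v''+\frac{N-1}{r}v')\ge(\cosh a_0)^{-1}v$ just as before.

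Next I would derive the first-order ODE relation. Multiplying \eqref{v-eqn} by $v'$ and integrating from $R/2$ to $t$ produces the analogue of \eqref{cr-in3} with $\pmb{\mathtt{C}}(v)$ replaced by $1$: $\frac{\eps^2}{2}v'^2(t)+(N-1)\eps^2\int_{R/2}^t\frac{1}{r}v'^2\,\drr=(\cosh v(t)-1)+\mathtt{K}_\eps$, and evaluating at $t=R/2$ together with the interior decay gives $|\mathtt{K}_\eps|\lesssim e^{-R/(8\eps)(\cosh a_0)^{-1/2}}$, so $|\frac{\eps^2}{2}v'^2(t)+(N-1)\eps^2\int_{R/2}^t\frac{1}{r}v'^2\,\drr-(\cosh v(t)-1)|$ is exponentially small. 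From this and the interior decay one gets the pointwise identity $|\eps v'(t)-2\sinh\frac{v(t)}{2}|\lesssim\sqrt\eps$ exactly as in \eqref{cr-in2-add}. Then the Dirac-concentration lemma goes through with $g\in C([0,R])$: splitting $\int_0^R g\,v'^2$ at $R-\sqrt\eps$, using the interior decay on $[0,R-\sqrt\eps]$ and the substitution $\eps v'\approx 2\sinh\frac{v}{2}$ on $[R-\sqrt\eps,R]$, one obtains $\eps\int_0^R g(r)v'^2(r)\,\drr=4g(R)(\cosh\frac{v(R)}{2}-1)+o_\eps(1)$; in particular, taking $g(r)=\frac1r$ near $R$ gives $\eps^2\int_{R/2}^R\frac1r v'^2\,\drr=\frac{8}{R}\eps\sinh^2\frac{v(R)}{4}+o_\eps(1)$, with the H\"older refinement $\eps^{\alpha/2}\mathcal O(1)$ when $g$ is H\"older (the remark after Lemma~\ref{dirac-lem}).

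With these in hand, the two displayed estimates follow just as \eqref{1119-9011-id} and \eqref{new-r-eps0305} did, only with the term $-\frac{2N}{R}\sinh^2\frac{b}{4}$ absent because $\pmb{\mathtt{C}}(v)\equiv1$ contributes no $\mathcal O(\eps)$ correction. Concretely, for $r_\eps\in\pmb{\mathbb{B}_\partial^\eps}$ I would write, as in \eqref{chchchha}, $\eps v'(r_\eps)=[2(\cosh v(r_\eps)-1)-2(N-1)\eps^2\int_{R/2}^{r_\eps}\frac1r v'^2\,\drr+e^{-R/(16\eps)(\cosh a_0)^{-1/2}}\mathcal O(1)]^{1/2}$, insert $\eps^2\int_{R/2}^{r_\eps}\frac1r v'^2=\frac{8}{R}\eps\sinh^2\frac{v(r_\eps)}{4}+\eps^{3/2}\mathcal O(1)$, and expand the square root using $\liminf_{\eps\downarrow0}v(r_\eps)>0$ (which holds for $v$ by the same argument as Theorem~\ref{bdylayer-exist-thm}, since Proposition~\ref{prop-ceps}'s analogue for $v$ gives $v(R)\to b$ too) to get $v'(r_\eps)=2\sinh\frac{v(r_\eps)}{2}\bigl(\frac1\eps-\frac{N-1}{2R}\sech^2\frac{v(r_\eps)}{4}+\sqrt\eps\mathcal O(1)\bigr)$. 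Dividing by $2\sinh\frac{v}{2}$, integrating over $[r_\eps,R]$, using $\int_{r_\eps}^R\frac{v'}{\sinh(v/2)}\,\drr=2\log|\tanh\frac{v(R)}{4}/\tanh\frac{v(r_\eps)}{4}|$ and $\int_{r_\eps}^R\sech^2\frac{v}{4}\,\drr=\log|\tanh\frac{v(R)}{4}/\tanh\frac{v(r_\eps)}{4}|+\frac12(\tanh^2\frac{v(r_\eps)}{4}-\tanh^2\frac{v(R)}{4})+\eps\mathcal O(1)(R-r_\eps)$ (as in \eqref{chchchha-6}), and solving for $(R-r_\eps)/\eps$ yields the second displayed estimate with error $\eps^{3/2}$ (the cleaner exponent here, versus $\sqrt\eps$ in \eqref{new-r-eps0305}, reflects that the prefactor $1+\frac{2N}{R}\eps\sinh^2\frac b4$ of \eqref{ch0812-2020} is replaced by $1$, so no $\sqrt\eps$-order cross term is generated). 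The main obstacle is not any single step but rather the bookkeeping: one must verify that the $\liminf v(r_\eps)>0$ property—the linchpin that lets the square-root expansion and the logarithmic integral be controlled—still holds for $v$ without the nonlocal coefficient; this I would get by first proving the $v$-analogue of Proposition~\ref{prop-ceps} (using the Dirichlet-to-Neumann map of Theorem~\ref{DtoN-map} with $\pmb{\mathtt{C}}\equiv1$, which gives $\langg v(R)\rangg=b+\frac{2\gamma}{R}\eps\cdot\frac{(N-1)\tanh\frac b4 \cosh^2\frac b4 \,\text{-type term}}{\gamma\cosh\frac b2+1}$, in any case $v(R)\to b>0$), then repeating the barrier/intermediate-value argument of Theorem~\ref{bdylayer-exist-thm} verbatim. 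Once that is in place the rest is a routine transcription, so I would state the lemma's proof as ``identical to Lemmas~\ref{lem-1119-9011} and \ref{mainthm-2} upon setting $\pmb{\mathtt{C}}(v)\equiv1$,'' flagging only the improved error exponent and the absence of the $\sinh^2\frac b4$ curvature-of-nonlocality term.
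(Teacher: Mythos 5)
Your proposal reconstructs exactly what the paper leaves implicit: the paper says Lemma~\ref{v-lem1} "can be proved following the same arguments in Lemmas~\ref{lem-1119-9011} and \ref{mainthm-2}" (hence also their prerequisites, Lemmas~\ref{asy-radial-lem}--\ref{dirac-lem}, Theorem~\ref{DtoN-map}, Proposition~\ref{prop-ceps} and Theorem~\ref{bdylayer-exist-thm}) specialized to $\pmb{\mathtt{C}}\equiv1$, and you trace that full chain correctly, including the crucial step of re-establishing $\liminf_{\eps\downarrow0}v(r_\eps)>0$ via the $v$-analogue of Proposition~\ref{prop-ceps}. Two cosmetic inaccuracies do not affect the argument: (i) the $\eps^{3/2}$ versus $\sqrt\eps$ comparison is simply a change of normalization between $(R-r_\eps)/\eps$ and $(R-r_\eps)/\eps^2$ — multiplying (\ref{new-r-eps0305}) by $\eps$ already yields $\eps^{3/2}$ precision — rather than a genuine gain from dropping the $\sinh^2\frac b4$ prefactor; and (ii) your interpolated guess for $\langg v(R)\rangg$ carries a spurious $\cosh^2\frac b4$ factor (the correct second-order coefficient is $\frac{2\gamma(N-1)\tanh\frac b4}{R(\gamma\cosh\frac b2+1)}$, cf.\ (\ref{v-329-night2})), though you correctly note that only $v(R)\to b>0$ is actually used.
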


\begin{lemma}\label{v-lem2}
Under the same hypotheses as in Lemma~\ref{v-lem1},
if $r_{\eps}^{k;\widetilde{k}}\in\pmb{\mathbb{B}_{\partial}^{\eps}}$ satisfies 
\begin{equation}
\langg{v}(r_{\eps}^{k;\widetilde{k}})\rangg=k+\frac{\widetilde{k}}{R}\eps\,\,as\,\,0<\eps\ll1,\notag
\end{equation}
 then
\begin{align}\notag
\llang{v}'(r_{\eps}^{k;\widetilde{k}})\rrang=
\frac{2}{\eps}\sinh\frac{k}{2}-\frac{1}{R}\left(2(N-1)\tanh\frac{k}{4}-\widetilde{k}\cosh\frac{k}{2}\right).
\end{align}
Moreover, the exact leading order term $\mathcal{O}(1)$ and the second order term $\eps\mathcal{O}(1)$ of $\llang(R-r_{\eps}^{k;\widetilde{k}})/{\eps}\rrang$ is described by 
\begin{align}\notag
\llang\frac{R-r_{\eps}^{k;\widetilde{k}}}{\eps}\rrang=\mathcal{A}^{k}+\frac{\mathcal{B}_{\#}^{k;\widetilde{k}}}{2R}\eps,
\end{align}
where $\mathcal{A}^{k}$ is defined in (\ref{mathcal-A}) and 
\begin{align}\notag
\mathcal{B}_{\#}^{k;\widetilde{k}}=&\,\frac{\gamma(N-1)\sech^2\frac{b}{4}}{\gamma\cosh\frac{b}{2}+1}\left(1+\frac{N-1}{2R}\sech^2\frac{b}{4}\right)-\frac{\widetilde{k}}{\sinh\frac{k}{2}}
\left(1+\frac{N-1}{2R}\sech^2\frac{k}{4}\right).
\end{align}
\end{lemma}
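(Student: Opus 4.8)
The plan is to re-run the arguments behind Lemmas~\ref{lem-1119-9011} and~\ref{mainthm-2} essentially verbatim, tracking the single structural simplification that (\ref{v-eqn})--(\ref{v-bdy}) is precisely (\ref{eq2})--(\ref{bd2}) with the nonlocal coefficient $\pmb{\mathtt{C}}(u)$ replaced by the constant $1$. Since $\langg\pmb{\mathtt{C}}(u)\ranggg=1$, freezing $\pmb{\mathtt{C}}(u)$ at $1$ changes nothing at leading order in $\eps$; it only erases, from the second-order ($\eps^{1}$) terms, the contribution $-\frac{2N}{R}\big(\cosh\frac b2-1\big)\eps=-\frac{4N}{R}\sinh^2\frac b4\,\eps$ that came from $\langg\pmb{\mathtt{C}}(u)\rangg$ minus its leading term in Proposition~\ref{prop-ceps}. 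So I expect every estimate of Sections~\ref{sec-dtn}--\ref{sec-mainthm-2} used in proving those two lemmas to survive with $\pmb{\mathtt{C}}$ frozen at $1$, with only the explicit $\sinh^2\frac b4$ summands disappearing and the factor $N\cosh^2\frac b2-1$ collapsing to $N-1$ in the boundary term.

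First I would record the $v$-analogues of the preliminaries: the maximum principle gives $0\le v\le a_0$; the unique continuation property of the second-order operator together with $v'(0)=0$ forces $v>0$, $v'>0$ on $(0,R]$, and differentiating (\ref{v-eqn}) and integrating against $r^{N-1}$ gives $v''\ge0$, exactly as in Lemma~\ref{asy-radial-lem}(i); the same comparison argument yields the interior decay $|v(r)|+\gamma\eps|v'(r)|\lesssim e^{-c(R-r)/\eps}$. Integrating $\eps^2(v''+\tfrac{N-1}{r}v')=\sinh v$ against $v'$ gives the first integral $\frac{\eps^2}{2}v'^2(t)+(N-1)\eps^2\int_{R/2}^t\frac1r v'^2\,dr=\cosh v(t)-1+\mathtt{K}_\eps$ with $\mathtt{K}_\eps\to0$ (this is (\ref{cr-in3})--(\ref{cr-in1}) with $\pmb{\mathtt{C}}\equiv1$), and Lemma~\ref{dirac-lem} gives $\eps\int_0^R g(r)v'^2\,dr=4g(R)\big(\cosh\frac{v(R)}{2}-1\big)+o_\eps(1)$ for continuous $g$. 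Taking $g(r)=\frac1r$ near $r=R$, evaluating at $t=r_\eps$, and absorbing $|\frac1{r_\eps}-\frac1R|\lesssim\eps$ (valid since $r_\eps\in\pmb{\mathbb{B}_{\partial}^{\eps}}$) reproduces (\ref{chchchha}) but without its $\sinh^2\frac b4$ term, i.e. $\eps v'(r_\eps)=2\sinh\frac{v(r_\eps)}{2}\big[1-\frac{\eps(N-1)}{2R}\sech^2\frac{v(r_\eps)}{4}+\sqrt\eps\,\mathcal O(1)\big]$, the first estimate of Lemma~\ref{v-lem1}. Dividing by $2\sinh\frac v2$, integrating over $[r_\eps,R]$, and using $\int_{r_\eps}^R\frac{v'}{\sinh(v/2)}\,dr=2\log|\tanh\frac{v(R)}{4}/\tanh\frac{v(r_\eps)}{4}|$ and the primitive $\int\sech^2\frac v4\cdot\frac{v'}{2\sinh(v/2)}\,dr=\log\tanh\frac v4-\frac12\tanh^2\frac v4$ (via $w=\tanh\frac v4$, as in (\ref{chchchha-6})) gives the second estimate of Lemma~\ref{v-lem1}; the Robin condition (\ref{v-bdy}) then yields $v'(R)=\frac2\eps\sinh\frac{v(R)}2-\frac{2(N-1)}{R}\tanh\frac{v(R)}4+o(\eps^{-1/2})$ and hence $v(R)=b+\frac{2\gamma(N-1)\tanh\frac b4}{R(\gamma\cosh\frac b2+1)}\eps+o(\eps)$ with the \emph{same} $b$ solving (\ref{arbequ}).

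Next, for $r_\eps^{k;\widetilde k}\in\pmb{\mathbb{B}_{\partial}^{\eps}}$ with $\langg v(r_\eps^{k;\widetilde k})\rangg=k+\frac{\widetilde k}{R}\eps$, I would substitute this ansatz together with the $v(R)$-expansion just obtained into the two estimates above, expand each hyperbolic function to first order in $\eps$ via $\tanh(\alpha+\beta\eps+o(\eps))=\tanh\alpha+(\beta\sech^2\alpha+o(1))\eps$ and $\log\tanh(\alpha+\beta\eps+o(\eps))=\log\tanh\alpha+(\tfrac{2\beta}{\sinh2\alpha}+o(1))\eps$, and collect. This is the computation (b1)--(b2) in the proof of Lemma~\ref{mainthm-2} with the $\pmb{\mathtt{C}}$-induced $\sinh^2\frac b4$-terms deleted: the leading term of $(R-r_\eps^{k;\widetilde k})/\eps$ is unchanged and equals $\mathcal A^k$ of (\ref{mathcal-A}); the $\eps$-correction loses the $4N\mathcal A^k\sinh^2\frac b4$ piece and its $\gamma(N\cosh^2\frac b2-1)$ factor collapses to $\gamma(N-1)$, producing $\mathcal B_\#^{k;\widetilde k}$; and the leading $\frac2\eps\sinh\frac k2$ of $v'(r_\eps^{k;\widetilde k})$ is unchanged while its $\mathcal O(1)$ term loses the $4N\sinh\frac k2\sinh^2\frac b4$ summand, leaving $\frac2\eps\sinh\frac k2-\frac1R\big(2(N-1)\tanh\frac k4-\widetilde k\cosh\frac k2\big)$. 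Uniformity of the $o_\eps(1)$ remainders over $\mathtt p$ (hence over $k$) in a bounded range follows exactly as in the last paragraph of the proof of Theorem~\ref{new-cor}, from $\inf_{\mathtt p\in[0,\mathtt p^*]}\sinh v(r_{\mathtt p}^\eps)\ge\sinh v(r_{\mathtt p^*}^\eps)>0$ for small $\eps$ (Theorem~\ref{bdylayer-exist-thm} applied to $v$).

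I expect the only genuine point of care --- the place where "the same argument" must not be applied blindly --- to be checking that no estimate carrying $\langg\pmb{\mathtt{C}}(u)\rangg$ was ever invoked in a way that controls an $\eps^{0}$ quantity: because $\langg\pmb{\mathtt{C}}(u)\ranggg=1$, the deviation $\sqrt{\pmb{\mathtt{C}}(u)}-1=\mathcal O(\eps)$ only ever enters through $\eps$-order error terms, so replacing it by $0$ removes precisely the explicit $\sinh^2\frac b4$ summands and leaves all leading behaviour untouched. Once that bookkeeping is in place, every constant in Lemma~\ref{v-lem2} is read off from the corresponding $u$-computation by deleting the nonlocal second-order contributions (setting $\sinh^2\frac b4$ to $0$ there and $N\cosh^2\frac b2-1$ to $N-1$ in the boundary term), so no analytic input beyond Sections~\ref{sec-dtn}--\ref{sec-mainthm-2} is required; this is why the proof may legitimately be omitted.
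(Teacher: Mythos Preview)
Your proposal is correct and matches the paper's own treatment: the paper explicitly states that Lemmas~\ref{v-lem1}--\ref{v-lem2} ``can be proved following the same arguments in Lemmas~\ref{lem-1119-9011} and~\ref{mainthm-2} so we omit the proof,'' and your write-up carries out precisely that rerun, correctly tracking that freezing $\pmb{\mathtt{C}}\equiv1$ deletes the $\sinh^2\tfrac{b}{4}$ summands and collapses $N\cosh^2\tfrac{b}{2}-1$ to $N-1$ in the boundary expansion. Your bookkeeping observation---that $\sqrt{\pmb{\mathtt{C}}(u)}-1=\mathcal O(\eps)$ only ever enters at order $\eps$, so its removal affects no leading term---is exactly the structural point that makes the omission legitimate.
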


Using Lemmas~\ref{v-lem1}--\ref{v-lem2} and
following the similar arguments in the proof of Theorem~\ref{new-cor}, we establish
refined structure of the boundary layer of $v$ in $[0,R]$ as follows.
\begin{theorem}\label{v-finthm}
Under the same hypotheses as in Lemma~\ref{v-lem1},
as $0<\eps\ll1$, we have
\begin{align}\label{v-329-night1}
 \max\left\{|v(r)|,\,{\gamma\eps}|v'(r)|\right\}\leq2a_0{e}^{-\frac{1}{8\eps}({\cosh{a}_0})^{-1/2}(R-r)},\,\,\forall\,r\in[0,R],
\end{align}
and 
\begin{align}
\langg{v}(R)\rangg=&\,b+\frac{N-1}{R}\eps\cdot\frac{2\gamma\tanh\frac{b}{4}}{\gamma\cosh\frac{b}{2}+1},\label{v-329-night2}\\
\langg{v}'(R)\rangg=&\frac{2}{\eps}\sinh\frac{b}{2}-\frac{N-1}{R}\cdot\frac{2\tanh\frac{b}{4}}{\gamma\cosh\frac{b}{2}+1}.\label{v-329-night3}
\end{align}
Moreover, for $r^{\eps}_{\mathtt{p};\mathtt{q}}\in\pmb{\mathbb{B}_{\partial}^{\eps}}$ obeying (\ref{0308-afr}),
we have
\begin{align}
\langg{v}(r^{\eps}_{\mathtt{p};\mathtt{q}})\rangg=&\,k(\mathtt{p})+\frac{\eps}{R}\mathcal{H}^{\gamma;b}_{\mathtt{p},\mathtt{q},\#}\sinh\frac{k(\mathtt{p})}{2}
\end{align}
and
\begin{align}
\langg{v}'(r^{\eps}_{\mathtt{p};\mathtt{q}})\rangg=&\,2\sinh\frac{k(\mathtt{p})}{2}\left[\frac{1}{\eps}-\frac{1}{R}\left(\frac{N-1}{2}\sech^2\frac{k(\mathtt{p})}{4}-\frac{\mathcal{H}^{\gamma;b}_{\mathtt{p},\mathtt{q},\#}}{2}\cosh\frac{k(\mathtt{p})}{2}\right)\right],
\end{align}
where $k(\mathtt{p})$ is uniquely determined by (\ref{0308-k}) and
\begin{align}
\mathcal{H}^{\gamma;b}_{\mathtt{p},\mathtt{q},\#}=\frac{\gamma(N-1)\sech^2\frac{b}{4}}{\gamma\cosh\frac{b}{2}+1}\cdot\frac{1+\frac{N-1}{2R}\sech^2\frac{b}{4}}{1+\frac{N-1}{2R}\sech^2\frac{k(\mathtt{p})}{4}}-\frac{2\mathtt{q}}{1+\frac{N-1}{2R}\sech^2\frac{k(\mathtt{p})}{4}}.
\end{align}
\end{theorem}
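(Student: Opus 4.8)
The plan is to obtain everything as a corollary of Lemmas~\ref{v-lem1}--\ref{v-lem2}, which are the exact analogues of Lemmas~\ref{lem-1119-9011}--\ref{mainthm-2} with the nonlocal coefficient $\pmb{\mathtt{C}}(u)$ replaced by the constant~$1$; the overall architecture then copies the proofs of Proposition~\ref{prop-ceps} and Theorem~\ref{new-cor}. The argument splits into three pieces: (i)~the global decay estimate~\eqref{v-329-night1}; (ii)~the boundary expansions~\eqref{v-329-night2}--\eqref{v-329-night3}; and (iii)~the pointwise description in $\pmb{\mathbb{B}_{\partial}^{\eps}}$. For~(i), since the right-hand side of \eqref{v-eqn} is $\sinh v$ with coefficient $1\geq(\cosh a_0)^{-1}$, the maximum principle yields $0\le v\le a_0$ on $(0,R]$, and the comparison argument of Lemma~\ref{asy-radial-lem}(ii) --- multiply $\eps^2(v''+\tfrac{N-1}{r}v')\ge(\cosh a_0)^{-1}v$ by $v$, its differentiated version by $v'$, then use one-dimensional comparison on $[\tfrac R2,R]$ --- carries over verbatim, since it never used the nonlocality beyond the lower bound $\pmb{\mathtt{C}}(u)\ge(\cosh a_0)^{-1}$. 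This, with the positivity/convexity of $v$, is recorded in Lemma~\ref{v-lem1}.

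For piece~(ii) I would put $r_\eps=R$ in the first estimate of Lemma~\ref{v-lem1}, multiply by $\gamma\eps$, and substitute into the Robin condition~\eqref{v-bdy}; using $\sech^2\tfrac s4\sinh\tfrac s2=2\tanh\tfrac s4$ this produces the scalar relation
\[
\Bigl|\,v(R)+2\gamma\sinh\tfrac{v(R)}{2}-a_0-\tfrac{2\gamma(N-1)}{R}\,\eps\,\tanh\tfrac{v(R)}{4}\,\Bigr|\lesssim\eps^{3/2}.
\]
Because $s\mapsto s+2\gamma\sinh\tfrac s2$ is strictly increasing and $a_0>0$, its leading part forces $v(R)\to b$ (the unique root of~\eqref{arbequ}) and then $|v(R)-b|\lesssim\eps$. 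Writing $v(R)=b+\widetilde b_\eps$ and linearizing via $\tfrac{d}{ds}(s+2\gamma\sinh\tfrac s2)\big|_{s=b}=1+\gamma\cosh\tfrac b2$ gives $\widetilde b_\eps=\tfrac{2\gamma(N-1)\tanh\frac b4}{R(\gamma\cosh\frac b2+1)}\eps+O(\eps^{3/2})$, which is~\eqref{v-329-night2}; then~\eqref{v-329-night3} follows from $v'(R)=(a_0-v(R))/(\gamma\eps)$. (Conceptually, the only change from Proposition~\ref{prop-ceps} is that, with $\pmb{\mathtt{C}}(v)\equiv1$, the contributions proportional to $N\sinh^2\tfrac b4$ disappear and $N\cosh^2\tfrac b2-1$ collapses to $N-1$.)

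For piece~(iii) I would follow the proof of Theorem~\ref{new-cor} step by step. Set $\overline{r^{\eps}_{\mathtt{p};\mathtt{q}}}=R\bigl(1-\tfrac{\mathtt p}{R}\eps-\tfrac{\mathtt q}{R^2}\eps^2\bigr)$; since $v$ and $\eps v'$ are uniformly bounded by~\eqref{v-329-night1} and $r^{\eps}_{\mathtt{p};\mathtt{q}}-\overline{r^{\eps}_{\mathtt{p};\mathtt{q}}}=o_\eps(1)\eps^2$, it suffices to expand $v,v'$ at $\overline{r^{\eps}_{\mathtt{p};\mathtt{q}}}$. The map $k\mapsto\mathcal A^{k}$ of~\eqref{mathcal-A} is strictly decreasing on $(0,b]$ with $\mathcal A^{k}\to\infty$ as $k\downarrow0$ and $\mathcal A^{b}=0$, while $\mathcal B_{\#}^{k;\widetilde k}$ is affine in $\widetilde k$; hence matching Lemma~\ref{v-lem2}'s formula for $\langg(R-r_\eps^{k;\widetilde k})/\eps\rangg$ against~\eqref{0308-afr} selects unique $k=k(\mathtt p)$ solving~\eqref{0308-k} (equivalently $\mathcal A^{k(\mathtt p)}=\mathtt p$) and $\widetilde k$ solving $\mathcal B_{\#}^{k(\mathtt p);\widetilde k}=2\mathtt q$, the latter yielding $\widetilde k=\mathcal H^{\gamma;b}_{\mathtt p,\mathtt q,\#}\sinh\tfrac{k(\mathtt p)}{2}$ with $\mathcal H^{\gamma;b}_{\mathtt p,\mathtt q,\#}$ as stated. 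Feeding these $k,\widetilde k$ back into Lemma~\ref{v-lem2} gives $\langg v(r_\eps^{k;\widetilde k})\rangg=k(\mathtt p)+\tfrac\eps R\mathcal H^{\gamma;b}_{\mathtt p,\mathtt q,\#}\sinh\tfrac{k(\mathtt p)}{2}$ and $\langg(R-r_\eps^{k;\widetilde k})/\eps\rangg=\langg(R-\overline{r^{\eps}_{\mathtt{p};\mathtt{q}}})/\eps\rangg$, so $r_\eps^{k;\widetilde k}$ and $\overline{r^{\eps}_{\mathtt{p};\mathtt{q}}}$ agree modulo $o_\eps(1)\eps^2$; a mean value theorem step controlled by~\eqref{v-329-night1} transports the expansions to $v(r^{\eps}_{\mathtt{p};\mathtt{q}})$ and $v'(r^{\eps}_{\mathtt{p};\mathtt{q}})$, and the $v'$-formula drops out after substituting $\widetilde k=\mathcal H^{\gamma;b}_{\mathtt p,\mathtt q,\#}\sinh\tfrac{k(\mathtt p)}{2}$ into Lemma~\ref{v-lem2} and using $\sech^2\tfrac k4\sinh\tfrac k2=2\tanh\tfrac k4$.

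The only genuinely non-cosmetic point, exactly as in Theorem~\ref{new-cor}, will be making the $O(\sqrt\eps)$/$O(\eps^{3/2})$ remainders in Lemmas~\ref{v-lem1}--\ref{v-lem2} uniform when $\mathtt p$ runs over a compact interval $[0,\mathtt p^{*}]$; this reduces to a uniform lower bound $\inf_{\mathtt p\in[0,\mathtt p^{*}]}\sinh v(r^{\eps}_{\mathtt{p};\mathtt{q}})\gtrsim1$, which in turn follows from the monotonicity of $\mathcal A^{k}$ together with the boundary-layer dichotomy for $v$ (the analogue of Theorem~\ref{bdylayer-exist-thm}, proved the same way from \eqref{v-329-night2} and Lemma~\ref{v-lem1}). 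Everything else is routine bookkeeping with the hyperbolic identities already used in Sections~\ref{sec-dtn}--\ref{sec-curvature}.
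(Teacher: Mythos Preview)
Your proposal is correct and follows essentially the same approach as the paper. The paper's own ``proof'' of Theorem~\ref{v-finthm} consists of a single sentence stating that the result follows from Lemmas~\ref{v-lem1}--\ref{v-lem2} by the same arguments as in the proof of Theorem~\ref{new-cor}; your write-up is precisely a fleshed-out version of that remark, correctly identifying which terms drop out when $\pmb{\mathtt{C}}(u)$ is replaced by~$1$ and tracking the resulting simplifications in the second-order coefficients.
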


 Using Theorem~\ref{new-cor} and Theorem~\ref{v-finthm}, we may
 compare the difference between asymptotics of $u$ and $v$ in $\pmb{\mathbb{B}_{\partial}^{\eps}}$
as follows:
\begin{corollary}[Comparison of asymptotics of $u$ and $v$]\label{cor-0403uv}
For $\eps>0$, let $u$ be the unique classical solution of
the nonlocal model~(\ref{eq2})--(\ref{bd2})
and $v$ be the model of (\ref{v-eqn})--(\ref{v-bdy}),
where $a_0$ and $\gamma$ are positive constants independent of $\eps$. Then
\begin{align}
\lim_{\eps\downarrow0}\frac{u(R)-v(R)}{\eps}=&\,-\gamma\lim_{\eps\downarrow0}\left(u'(R)-v'(R)\right)
=\frac{N}{R}\cdot\frac{2\gamma\sinh\frac{b}{2}}{\gamma\cosh\frac{b}{2}+1}\left(\cosh\frac{b}{2}-1\right),\notag\\
\lim_{\eps\downarrow0}\frac{u(r^{\eps}_{\mathtt{p};\mathtt{q}})-v(r^{\eps}_{\mathtt{p};\mathtt{q}})}{\eps}=&\,\frac{\mathcal{H}^{\gamma;b}_{\mathtt{p},\mathtt{q}}-\mathcal{H}^{\gamma;b}_{\mathtt{p},\mathtt{q},\#}}{R}\sinh\frac{k(\mathtt{p})}{2}\notag\\
=&\,\frac{4N\sinh^2\frac{b}{4}\sinh\frac{k(\mathtt{p})}{2}}{R+\frac{N-1}{2}\sech^2\frac{k(\mathtt{p})}{4}}\left(\frac{\gamma\left(1+\frac{N-1}{2R}\sech^2\frac{b}{4}\right)}{\gamma\cosh\frac{b}{2}+1}+\mathtt{p}\right),\notag
\end{align}
and
\begin{equation}\label{0406-2018-1}
    \begin{aligned}
&\lim_{\eps\downarrow0}\left(u'(r^{\eps}_{\mathtt{p};\mathtt{q}})-v'(r^{\eps}_{\mathtt{p};\mathtt{q}})\right)\\
=&\frac{2}{R}\sinh\frac{k(\mathtt{p})}{2}\left(-2N\sinh^2\frac{b}{4}+\frac{\mathcal{H}^{\gamma;b}_{\mathtt{p},\mathtt{q}}-\mathcal{H}^{\gamma;b}_{\mathtt{p},\mathtt{q},\#}}{2}\cosh\frac{k(\mathtt{p})}{2}\right)\\
=&\,-\frac{4N}{R}\sinh^2\frac{b}{4}\sinh\frac{k(\mathtt{p})}{2}\Bigg[1-\frac{\cosh\frac{k(\mathtt{p})}{2}}{1+\frac{N-1}{2R}\sech^2\frac{k(\mathtt{p})}{4}}\left(\frac{\gamma\left(1+\frac{N-1}{2R}\sech^2\frac{b}{4}\right)}{\gamma\cosh\frac{b}{2}+1}+\mathtt{p}\right)\Bigg].
\end{aligned}
\end{equation}

\end{corollary}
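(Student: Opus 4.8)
The plan is to read off Corollary~\ref{cor-0403uv} by subtracting the first-two-term asymptotic expansions that have already been established: those of $u,u'$ from Proposition~\ref{prop-ceps} (at $r=R$) and Theorem~\ref{new-cor} (at $r^{\eps}_{\mathtt{p};\mathtt{q}}\in\pmb{\mathbb{B}_{\partial}^{\eps}}$), against the parallel expansions of $v,v'$ collected in Theorem~\ref{v-finthm}. The key structural observation is that $u$ and $v$ share the \emph{same} leading terms: $b$ and $\frac{2}{\eps}\sinh\frac{b}{2}$ at $r=R$ (with $b$ fixed by \eqref{arbequ}), and $k(\mathtt{p})$ and $\frac{2}{\eps}\sinh\frac{k(\mathtt{p})}{2}$ at $r^{\eps}_{\mathtt{p};\mathtt{q}}$ (with $k(\mathtt{p})$ fixed by \eqref{0308-k}, which does not see the nonlocal coefficient). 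Hence in every difference the leading order cancels exactly and what survives is the difference of the $\mathcal{O}(\eps)$ correction terms for $u-v$, respectively the difference of the $\mathcal{O}(1)$ correction terms for $u'-v'$. Since the error controls in \eqref{11-0305} and \eqref{0403-2018}, together with their analogues for $v$, are $o_{\eps}(1)$ after the relevant rescaling, each displayed limit is legitimate.

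First, at the boundary point $r=R$. The relation $\lim_{\eps\downarrow0}\frac{u(R)-v(R)}{\eps}=-\gamma\lim_{\eps\downarrow0}(u'(R)-v'(R))$ is actually exact, not merely asymptotic: subtracting the two Robin conditions $u(R)+\gamma\eps u'(R)=a_0=v(R)+\gamma\eps v'(R)$ gives $u(R)-v(R)=-\gamma\eps(u'(R)-v'(R))$. For the explicit value I would subtract \eqref{v-329-night2} from \eqref{mainth1-id6}; the term $b$ drops, and the coefficient of $\eps$ becomes $\frac{2\gamma\tanh\frac{b}{4}}{R(\gamma\cosh\frac{b}{2}+1)}\big[(N\cosh^2\frac{b}{2}-1)-(N-1)\big]=\frac{2N\gamma\tanh\frac{b}{4}\sinh^2\frac{b}{2}}{R(\gamma\cosh\frac{b}{2}+1)}$, which the half-angle identities $\tanh\frac{b}{4}=\frac{\cosh\frac{b}{2}-1}{\sinh\frac{b}{2}}$ and $\sinh^2\frac{b}{2}=(\cosh\frac{b}{2}-1)(\cosh\frac{b}{2}+1)$ collapse to $\frac{2N\gamma\sinh\frac{b}{2}(\cosh\frac{b}{2}-1)}{R(\gamma\cosh\frac{b}{2}+1)}$. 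The same subtraction applied to \eqref{mainth1-id5} minus \eqref{v-329-night3} reproduces $-\frac{1}{\gamma}$ times this quantity, which serves as a consistency check.

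Next, at $r^{\eps}_{\mathtt{p};\mathtt{q}}$. Subtracting the $v$-analogue of \eqref{u-0d} in Theorem~\ref{v-finthm} from \eqref{u-0d}, the term $k(\mathtt{p})$ cancels and $\frac{u(r^{\eps}_{\mathtt{p};\mathtt{q}})-v(r^{\eps}_{\mathtt{p};\mathtt{q}})}{\eps}\to\frac{\mathcal{H}^{\gamma;b}_{\mathtt{p};\mathtt{q}}-\mathcal{H}^{\gamma;b}_{\mathtt{p},\mathtt{q},\#}}{R}\sinh\frac{k(\mathtt{p})}{2}$. Inserting \eqref{mathcal-H} and the corresponding formula for $\mathcal{H}^{\gamma;b}_{\mathtt{p},\mathtt{q},\#}$, the pure-$\mathtt{q}$ summands $-\frac{2\mathtt{q}}{1+\frac{N-1}{2R}\sech^2\frac{k(\mathtt{p})}{4}}$ cancel, while the two leading summands combine through $(N\cosh^2\frac{b}{2}-1)-(N-1)=N\sinh^2\frac{b}{2}$ and the further reduction $N\sinh^2\frac{b}{2}\,\sech^2\frac{b}{4}=4N\sinh^2\frac{b}{4}$; factoring out $\frac{4N\sinh^2\frac{b}{4}}{1+\frac{N-1}{2R}\sech^2\frac{k(\mathtt{p})}{4}}$ and absorbing the $R$ into the denominator gives the stated closed form with the factor $\big(\frac{\gamma(1+\frac{N-1}{2R}\sech^2\frac{b}{4})}{\gamma\cosh\frac{b}{2}+1}+\mathtt{p}\big)$. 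For the derivative, subtracting the $v$-version of \eqref{u-1std} from \eqref{u-1std} cancels the $\frac{N-1}{2}\sech^2\frac{k(\mathtt{p})}{4}$ contribution and leaves $\frac{2}{R}\sinh\frac{k(\mathtt{p})}{2}\big(-2N\sinh^2\frac{b}{4}+\frac{1}{2}(\mathcal{H}^{\gamma;b}_{\mathtt{p};\mathtt{q}}-\mathcal{H}^{\gamma;b}_{\mathtt{p},\mathtt{q},\#})\cosh\frac{k(\mathtt{p})}{2}\big)$, into which the already-computed difference of the $\mathcal{H}$'s is substituted to produce the second line of \eqref{0406-2018-1}.

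There is no genuine analytic difficulty here: the Dirichlet-to-Neumann asymptotics, the Dirac-mass behaviour of $\eps u'^2$, and the parallel treatment of $v$ are all already in hand. The only thing to watch is the hyperbolic bookkeeping — keeping straight which factors carry $b$ and which carry $k(\mathtt{p})$ inside the $\sech^2$ terms, and verifying that the $\mathtt{q}$-dependence genuinely drops out of $u'-v'$ as the final formula claims. I would organize the whole computation around the single identity $N\sinh^2\frac{b}{2}\,\sech^2\frac{b}{4}=4N\sinh^2\frac{b}{4}$, which is what makes every expression collapse to the compact forms in the statement.
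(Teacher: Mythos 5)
Your proposal is correct and follows exactly the route the paper intends: the corollary is stated immediately after Theorem~\ref{v-finthm} as a direct consequence of subtracting the parallel first-two-term expansions from Proposition~\ref{prop-ceps}/Theorem~\ref{new-cor} (for $u$) and Theorem~\ref{v-finthm} (for $v$), with the paper leaving the hyperbolic bookkeeping implicit. Your observations that the $u(R)-v(R)=-\gamma\eps(u'(R)-v'(R))$ relation is exact by the Robin condition, that the pure-$\mathtt{q}$ terms cancel in $\mathcal{H}^{\gamma;b}_{\mathtt{p};\mathtt{q}}-\mathcal{H}^{\gamma;b}_{\mathtt{p},\mathtt{q},\#}$, and that the reduction hinges on $N\sinh^2\frac{b}{2}\sech^2\frac{b}{4}=4N\sinh^2\frac{b}{4}$ together with $\tanh\frac{b}{4}\sinh^2\frac{b}{2}=(\cosh\frac{b}{2}-1)\sinh\frac{b}{2}$ all check out against the stated formulas.
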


\begin{remark}\label{0403-rk-2018}
The numerical simulation (see Figure~\ref{chlee04072018}) shows that solutions of $u$ and $v$ with $\eps=10^{-3}$
are almost overlapping near the boundary.
However, we want to stress that
 as $\eps\downarrow0$, even if both $u(r_{\eps})$ and $v(r_{\eps})$ have the same leading order term for 
$r_\eps\in\pmb{\mathbb{B}_{\partial}^{\eps}}$, their slopes always have an $\mathcal{O}(1)$ difference (see (\ref{0406-2018-1})).
\end{remark}

\section{Boundary concentration phenomenon: Proof of Theorem~\ref{mainthm-1}}\label{sec-concentration}
\noindent

 In this section, we give the proof of Theorem~\ref{mainthm-1} as follows.

  Let $h\in\mathrm{C}([0,R])$ be a continuous function independent of $\eps$.
To prove (\ref{mainth1-id1-20180204}), we need to estimate
\begin{align}\label{f1-0328-2018}
\int_0^R\frac{\mathcal{F}(\eps{u}'(r))-\mathcal{F}(0)}{\eps}h(r)\drr,\quad\mathrm{as}\,\,0<\eps\ll1.
\end{align}
Note that (\ref{0217-0217}) implies 
\begin{align}\label{r-0328-f}
\frac{\left|\mathcal{F}(\eps{u}'(r))-\mathcal{F}(0)\right|}{\eps}\lesssim\eps^{-1}{e}^{-\frac{M_1}{\eps}\tau(R-r)},\quad{r}\in[0,R].
\end{align}
Thus, the main difficulty is to deal with the estimate $\frac{\mathcal{F}(\eps{u}'(r))-\mathcal{F}(0)}{\eps}$
as $r$ is quite close to the boundary. Note also that $0<\tau\leq1$. Due to (\ref{r-0328-f}),
we shall consider a decomposition of (\ref{f1-0328-2018}) as follows:
\begin{align}\label{f1-0328-2018-ad1}
\int_0^R\frac{\mathcal{F}(\eps{u}'(r))-\mathcal{F}(0)}{\eps}h(r)\drr
=&\int_0^{R-{\eps}^{1-\tau/2}}\frac{\mathcal{F}(\eps{u}'(r))-\mathcal{F}(0)}{\eps}h(r)\drr\notag\\
&+\int_{R-{\eps}^{1-\tau/2}}^R\frac{\mathcal{F}(\eps{u}'(r))-\mathcal{F}(0)}{\eps}(h(r)-h(R))\drr\\
&+h(R)\int_{R-{\eps}^{1-\tau/2}}^R\frac{\mathcal{F}(\eps{u}'(r))-\mathcal{F}(0)}{\eps}\drr.\notag
\end{align}
Then by (\ref{r-0328-f}) one may check that
\begin{align*}
&\left|\int_0^{R-{\eps}^{1-\tau/2}}\frac{\mathcal{F}(\eps{u}'(r))-\mathcal{F}(0)}{\eps}h(r)\drr\right|
\lesssim
\eps^{-1}\left(\max_{[0,R]}|h|\right)\int_0^{R-{\eps}^{1-\tau/2}}{e}^{-\frac{M_1}{\eps}\tau(R-r)}\drr\lesssim{e}^{-\frac{M_1\tau}{\eps^{\tau/2}}},
\end{align*}
and
\begin{align*}
\left|\int_{R-{\eps}^{1-\tau/2}}^R\frac{\mathcal{F}(\eps{u}'(r))-\mathcal{F}(0)}{\eps}(h(r)-h(R))\drr\right|\lesssim\max_{r\in[R-{\eps}^{1-\tau/2},R]}|h(r)-h(R)|.
\end{align*}
As a consequence,
\begin{align}\label{0329-mornight}
\lim_{\eps\downarrow0}&\left(\int_0^{R-{\eps}^{1-\tau/2}}\frac{\mathcal{F}(\eps{u}'(r))-\mathcal{F}(0)}{\eps}h(r)\drr+\int_{R-{\eps}^{1-\tau/2}}^R\frac{\mathcal{F}(\eps{u}'(r))-\mathcal{F}(0)}{\eps}(h(r)-h(R))\drr\right)=0.
\end{align}

To deal with the rightmost-hand side of (\ref{f1-0328-2018-ad1}),
we first notice that by (\ref{ceps-1}), (\ref{gradesti}) and (\ref{cr-in2-add}),
there holds $\big|\eps{u}'(r)-2\sinh\frac{u(r)}{2}\big|\leq{C}\sqrt{\eps}$ uniformly in $[0,R]$ as $0<\eps\ll1$,
where $C>0$ is independent of $\eps$. For the sake of convenience,
we express it by
\begin{align}\label{new-express}
\eps{u}'(r)=2\sinh\frac{u(r)}{2}+o_{\eps}(1),\quad\mathrm{as}\,\,0<\eps\ll1.
\end{align}
Note also that $u(R)\to{b}$ as $\eps\downarrow0$,
 $u(r)>0$ and $u'(r)>0$ for $r\in(0,R]$ (cf. Lemma~\ref{asy-radial-lem}(i)).
Hence, by (\ref{new-express}),
\begin{equation}\label{0329-lai}
  \begin{aligned}
&\int_{R-{\eps}^{1-\tau/2}}^R\frac{\mathcal{F}(\eps{u}'(r))-\mathcal{F}(0)}{\eps}\drr\\
&\hspace{16pt}=\int_{R-{\eps}^{1-\tau/2}}^R\frac{\mathcal{F}(2\sinh\frac{u(r)}{2}+o_{\eps}(1))-\mathcal{F}(0)}{2\sinh\frac{u(r)}{2}+o_{\eps}(1)}u'(r)\drr\\
&\hspace{16pt}=\int_{u(R-{\eps}^{1-\tau/2})}^{u(R)}\frac{\mathcal{F}(2\sinh\frac{t}{2}+o_{\eps}(1))-\mathcal{F}(0)}{2\sinh\frac{t}{2}+o_{\eps}(1)}\dtt\\
&\hspace{16pt}=\left\{\int^{0+}_{u(R-{\eps}^{1-\tau/2})}+\int_{0+}^b+\int_b^{u(R)}\right\}\frac{\mathcal{F}(2\sinh\frac{t}{2}+o_{\eps}(1))-\mathcal{F}(0)}{2\sinh\frac{t}{2}+o_{\eps}(1)}\dtt.
\end{aligned}  
\end{equation}

We need to estimate the last expression of \eqref{0329-lai}. Obviously, as $\eps\downarrow0$,
\begin{align}\label{329-001}
&\int_{0+}^b\frac{\mathcal{F}(2\sinh\frac{t}{2}+o_{\eps}(1))-\mathcal{F}(0)}{2\sinh\frac{t}{2}+o_{\eps}(1)}\dtt
\to
\int_{0+}^b\frac{\mathcal{F}(2\sinh\frac{t}{2})-\mathcal{F}(0)}{2\sinh\frac{t}{2}}\dtt\,\,\mathrm{(which\,\,is\,\,finite)}.
\end{align}
On the other hand,
\begin{equation}\label{329-002}
   \begin{aligned}
 &\left|\left\{\int^{0+}_{u(R-{\eps}^{1-\tau/2})}+\int_b^{u(R)}\right\}\frac{\mathcal{F}(2\sinh\frac{t}{2}+o_{\eps}(1))-\mathcal{F}(0)}{2\sinh\frac{t}{2}+o_{\eps}(1)}\dtt\right|\\
&\hspace{66pt}\lesssim
\left\{\int_{0+}^{u(R-{\eps}^{1-\tau/2})}+\int_b^{u(R)}\right\}\left(2\sinh\frac{t}{2}\right)^{\tau-1}\dtt+o_{\eps}(1)\\
&\hspace{66pt}\lesssim\left\{\int_{0+}^{u(R-{\eps}^{1-\tau/2})}+\int_b^{u(R)}\right\}t^{\tau-1}\dtt+o_{\eps}(1)\\
&\hspace{66pt}\lesssim\tau^{-1}\left(u^{\tau}(R-{\eps}^{1-\tau/2})+u^{\tau}(R)-b^{\tau}+o_{\eps}(1)\right)\to0\,\,\mathrm{as}\,\,\eps\downarrow0.
\end{aligned}
\end{equation}

Here we have used $0<\tau\leq1$, $\sinh\frac{t}{2}\geq\frac{t}{2}$ for $t\geq0$,
$u(R-{\eps}^{1-\tau/2})\to0$ (by (\ref{gradesti})) and $u(R)\to{b}$ as $\eps\downarrow0$. 
As a result,
\begin{align}\label{over329}
\lim_{\eps\downarrow0}\int_{R-{\eps}^{1-\tau/2}}^R\frac{\mathcal{F}(\eps{u}'(r))-\mathcal{F}(0)}{\eps}\drr=\int_{0+}^b\frac{\mathcal{F}(2\sinh\frac{t}{2})-\mathcal{F}(0)}{2\sinh\frac{t}{2}}\dtt
\end{align}
which follows from (\ref{0329-lai})--(\ref{329-002}).
Combining (\ref{f1-0328-2018-ad1})--(\ref{0329-mornight}) and (\ref{over329}),
we get (\ref{mainth1-id1-20180204}) and complete the proof of Theorem~\ref{mainthm-1}(I-i). Moreover, by applying the same argument, we can prove Theorem~\ref{mainthm-1}(I-ii).

Now we want to prove (\ref{mainth1-id1-20180319}).
By (\ref{0315-2018}), we have $\ds\lim_{\eps\downarrow0}\inf_{[r_{\mathtt{p}}^{\eps},R]}u>0$
and $\ds\lim_{\eps\downarrow0}\inf_{[r_{\mathtt{p}}^{\eps},R]}\eps{u}'>0$. This along with (\ref{chchchha-2})
concludes 
\begin{align}\label{chchchha-2018329}
\lim_{\eps\downarrow0}\sup_{[r_{\mathtt{p}}^{\eps},R]}\left(\frac{{u}'(r)}{2\sinh\frac{u(r)}{2}}-\frac{1}{\eps}\right)<\infty.
\end{align}
Since $\left|R-r_{\mathtt{p}}^{\eps}\right|\approx\mathtt{p}{\eps}$,
by putting (\ref{chchchha-2018329}) into (\ref{mainth1-id1-20180319}) 
and using the same argument of (\ref{f1-0328-2018-ad1})--(\ref{329-002}),
after making appropriate manipulations we obtain
\begin{align}\label{mainth1-id1-20180329}
\int_0^R\frac{\mathcal{F}(\eps{u}'(r))}{\eps}\chi_{[r_{\mathtt{p}}^{\eps},R]}(r)h(r)\drr
=&\,h(R)\int_{r_{\mathtt{p}}^{\eps}}^R\frac{\mathcal{F}(2\sinh\frac{u(r)}{2})}{2\sinh\frac{u(r)}{2}}u'(r)\drr+o_{\eps}(1)\notag\\
=&\,h(R)\int_{u(r_{\mathtt{p}}^{\eps})}^{u(R)}\frac{\mathcal{F}(2\sinh\frac{t}{2})}{2\sinh\frac{t}{2}}\dtt+o_{\eps}(1)\\
=&\,h(R)\int_{k(\mathtt{p})}^{b}\frac{\mathcal{F}(2\sinh\frac{t}{2})}{2\sinh\frac{t}{2}}\dtt+o_{\eps}(1)\,\,\mathrm{as}\,\,0<\eps\ll1,\notag
\end{align}
yielding (\ref{mainth1-id1-20180319}). Similarly, we can prove (\ref{mainth1-id2-20180319}).
Therefore, the proof of Theorem~\ref{mainthm-1} is completed.\\ \\
%
%
%
{\bf Acknowledgments.} 
This work is partially supported by the research Grant MOST-108-2115-M-007-006-MY2 of Taiwan. 
The author is deeply indebted to Professors Tai-Chia Lin and Chun Liu for exerting an imperceptible influence on his research. He would also like to thank Dr. Chun-Ming Yang for providing numerical simulations of the solutions to equation~(\ref{eq2})--(\ref{bd2}) and equation~(\ref{v-eqn})--(\ref{v-bdy}) with $\eps=10^{-3}$. Finally, the author is grateful to two anonymous referees and the editor for their helpful remarks which improve the exposition of this paper.  


\section{Appendix: Uniqueness result of (\ref{eq1}) with three type boundary conditions}\label{appsec}
\noindent

 In this section, we show the strictly convexness of the functional~(\ref{energy0}).
\begin{proposition}\label{convexthm}
For any $U_1$, $U_2\in{H}^1(\Omega)$ with $U_1{\neq}U_2$, we have
\begin{align}\label{procoveng}
E_{\eps}[tU_1+(1-t)U_2]<tE_{\eps}[U_1]+(1-t)E_{\eps}[U_2],\quad\forall{t\in(0,1)}.
\end{align}
\end{proposition}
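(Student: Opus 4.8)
Proof proposal.

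The plan is to prove \eqref{procoveng} by establishing convexity of each of the three summands of $E_\eps$ along the segment joining $U_1$ and $U_2$, and then observing that whenever $U_1\neq U_2$ at least one of the two quadratic summands is \emph{strictly} convex, which forces the strict inequality. Throughout I would assume $\cosh U_1,\cosh U_2\in L^1(\Omega)$; otherwise the right-hand side of \eqref{procoveng} is $+\infty$ and one works with the restriction of $E_\eps$ to the convex set $\{U\in H^1(\Omega):\cosh U\in L^1(\Omega)\}$, on which the minimizer is sought and on which, by the H\"older bound derived below, all three summands are finite.

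First I would treat the Dirichlet term $Q[W]:=\frac{\eps^2}{2}\int_\Omega|\nabla W|^2\dxx$ and the boundary term $P[W]:=\frac{\eps}{2\gamma}\int_{\partial\Omega}(W-a)^2\dsx$, which are, up to an affine shift in the second case, nonnegative quadratic forms. The polarization identity gives, for $t\in(0,1)$,
\[
tQ[U_1]+(1-t)Q[U_2]-Q[tU_1+(1-t)U_2]=t(1-t)\,\frac{\eps^2}{2}\int_\Omega|\nabla(U_1-U_2)|^2\dxx\ge0,
\]
\[
tP[U_1]+(1-t)P[U_2]-P[tU_1+(1-t)U_2]=t(1-t)\,\frac{\eps}{2\gamma}\int_{\partial\Omega}(U_1-U_2)^2\dsx\ge0,
\]
the constant $a$ cancelling in the difference that defines the second line. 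The first gap vanishes only if $\nabla(U_1-U_2)\equiv0$, i.e.\ (since $\Omega$ is connected) only if $U_1-U_2$ is a constant; the second gap vanishes only if $U_1=U_2$ on $\partial\Omega$ in the trace sense. Hence if $U_1\neq U_2$ then either $\nabla(U_1-U_2)\not\equiv0$, so $Q$ contributes a strict inequality, or $U_1-U_2\equiv c$ for some constant $c\neq0$, whose trace on $\partial\Omega$ is the nonzero constant $c$, so $P$ contributes a strict inequality. This is the step that produces strictness.

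The main obstacle is the convexity of the nonlocal term $\Phi[W]:=|\Omega|\log\fint_\Omega\cosh W\,\dxx$; here weak convexity suffices. I would use the representation $\cosh W=\tfrac12 e^{W}+\tfrac12 e^{-W}=\int_{\{-1,+1\}}e^{\mu W}\,\mathcal P(\mathrm d\mu)$ with $\mathcal P=\tfrac12(\delta_{-1}+\delta_{+1})$ — the very measure appearing in the stochastic reformulation of Section~\ref{sec-intro} — to lift the problem to $\widehat\Omega:=\Omega\times\{-1,+1\}$ equipped with the product measure $\mathrm dx\otimes\mathcal P$. For $W=tU_1+(1-t)U_2$ one has $\mu W(x)=t\,\mu U_1(x)+(1-t)\,\mu U_2(x)$, so H\"older's inequality on $\widehat\Omega$ with conjugate exponents $1/t$ and $1/(1-t)$ yields
\[
\int_\Omega\cosh\!\bigl(tU_1+(1-t)U_2\bigr)\dxx\le\Bigl(\int_\Omega\cosh U_1\,\dxx\Bigr)^{t}\Bigl(\int_\Omega\cosh U_2\,\dxx\Bigr)^{1-t};
\]
taking logarithms, subtracting $\log|\Omega|$ and multiplying by $|\Omega|>0$ gives $\Phi[tU_1+(1-t)U_2]\le t\Phi[U_1]+(1-t)\Phi[U_2]$. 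Equivalently, this is the classical convexity of the logarithmic-moment (``log-sum-exp'') functional applied on $\widehat\Omega$. Adding the three estimates, convexity of $\Phi$ together with convexity of $Q$ and $P$ gives $E_\eps[tU_1+(1-t)U_2]\le tE_\eps[U_1]+(1-t)E_\eps[U_2]$, and the strict gap contributed by $Q$ or by $P$ (according to the dichotomy above) upgrades this to \eqref{procoveng}; uniqueness of the classical solution of \eqref{eq1}--\eqref{bd1} then follows, two solutions being distinct critical points of the strictly convex $E_\eps$.
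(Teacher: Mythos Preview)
Your proof is correct but follows a genuinely different route from the paper's. The paper places the burden of \emph{strictness} on the nonlocal term: it shows that $\widehat E_\eps[U]:=\log\fint_\Omega\cosh U\,\dxx$ is itself strictly convex, via the pointwise inequality $(A+1)^t(B+1)^{1-t}\ge A^tB^{1-t}+1$ (with $A=e^{2U_1}$, $B=e^{2U_2}$) followed by H\"older, and then tracks the equality cases to force $U_1=U_2$. You instead prove only weak convexity of the nonlocal term---your product-measure H\"older argument on $\Omega\times\{-1,+1\}$ is a clean way to do this, and is essentially the same integral inequality the paper reaches, minus the strictness analysis---and extract strictness from the two quadratic pieces through the dichotomy ``either $\nabla(U_1-U_2)\not\equiv0$, or $U_1-U_2$ is a nonzero constant with nonzero trace''.

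Each approach buys something. Yours is shorter and avoids the equality-case bookkeeping for $\widehat E_\eps$. The paper's version, by proving $\widehat E_\eps$ strictly convex on its own, yields a statement that survives when the boundary penalty is absent or merely linear---for instance under the Neumann condition~\eqref{nbd}, where the natural energy has a linear boundary term and your dichotomy would fail in the case $U_1-U_2\equiv c\neq0$. Since Proposition~\ref{convexthm} is stated for the Robin energy~\eqref{energy0}, your argument suffices for it; but the paper's stronger intermediate result is what one would want if reusing the convexity for the other boundary conditions treated in Proposition~\ref{corapp}.
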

\begin{proof}
Since $\int_{\Omega}|\nabla{U}|^2\dxx$ and $\int_{\partial\Omega}(U-a)^2\dsx$ are convex functionals,
it suffices to show that $\widehat{E}_{\eps}[U]:=\log\fint_{\Omega}\cosh{U}\dxx$ is strictly convex.
We need the following elementary inequality:
\begin{align}\label{abinequ}
\begin{cases}
(A+1)^t(B+1)^{1-t}\geq{A}^tB^{1-t}+1,\quad\mathrm{for}\,\,{A},\,B>0\,\,\mathrm{and}\,\,t\in(0,1),\\
\mathrm{and\,\,the\,\,equality\,\,holds\,\,if\,\,and\,\,only\,\,if}\,\,A=B.
\end{cases}
\end{align}

Note that $\frac{1}{t},\frac{1}{1-t}>1$.
Applying (\ref{abinequ})
with $A=e^{2U_1}$ and $B=e^{2U_2}$ and the H\"{o}lder inequality to $\widehat{E}_{\eps}$, one may check that
\begin{equation}\label{ehat-1-app}
  \begin{aligned}
\widehat{E}_{\eps}[tU_1+(1-t)U_2]=&\,\log\fint_{\Omega}\frac{1}{2}e^{-(tU_1+(1-t)U_2)}\left(e^{2(tU_1+(1-t)U_2)}+1\right)\dxx\\
\leq&\,\log\fint_{\Omega}\frac{1}{2}e^{-(tU_1+(1-t)U_2)}\left(e^{2U_1}+1\right)^t\left(e^{2U_2}+1\right)^{1-t}\dxx\\
=&\,\log\fint_{\Omega}\left(\frac{e^{U_1}+e^{-U_1}}{2}\right)^{t}\left(\frac{e^{U_2}+e^{-U_2}}{2}\right)^{1-t}\dxx\\
\leq&\,\log\left(\fint_{\Omega}\frac{e^{U_1}+e^{-U_1}}{2}\dxx\right)^t\left(\fint_{\Omega}\frac{e^{U_2}+e^{-U_2}}{2}\dxx\right)^{1-t}\\
=&\,t\widehat{E}_{\eps}[U_1]+(1-t)\widehat{E}_{\eps}[U_2].
\end{aligned}  
\end{equation}

Moreover, the equality of (\ref{ehat-1-app}) holds if and only if $e^{2U_1}=e^{2U_2}$ 
(by (\ref{abinequ}) and the second line of (\ref{ehat-1-app}))
and $\frac{\cosh{U_1}}{\cosh{U_2}}$ takes a constant value  in $\Omega$ (from the fourth line of (\ref{ehat-1-app}) and the condition for equality to hold). As a consequence, \eqref{ehat-1-app} implies $U_1=U_2$, and
 we have $\widehat{E}_{\eps}[tU_1+(1-t)U_2]<t\widehat{E}_{\eps}[U_1]+(1-t)\widehat{E}_{\eps}[U_2]$ for $U_1\neq{U}_2$. Therefore, we get (\ref{procoveng}) and complete the proof of Proposition~\ref{convexthm}.
\end{proof}

On the other hand, one finds $\ds\inf_{H^1(\Omega)}E_{\eps}\geq|\Omega|\log1=0$
since $\cosh{U}\geq1$.
Along with Proposition~\ref{convexthm}, we may use the Direct method to show that $E_{\eps}$ has a unique minimizer $U^*$ in $H^1(\Omega)$, and $U^*$ is a weak solution of (\ref{eq1})--(\ref{bd1}). Then the standard elliptic regularity theory
immediately shows that the minimizer $U^*\in\mathrm{C}^1(\overline{\Omega})\cap\mathrm{C}^{\infty}(\Omega)$ is a class solution of (\ref{eq1})--(\ref{bd1}) for bounded domain $\Omega$ with smooth boundary. 


We now shall prove the uniqueness of the model~(\ref{eq1}) with three type boundary conditions:
the Robin boundary condition~(\ref{bd1}), the Dirichlet boundary condition
\begin{align}\label{dbd}
U=\widetilde{a}(x)\quad\mathrm{on}\,\,\partial\Omega,
\end{align}
and the Neumann boundary condition
\begin{align}\label{nbd}
\partial_{\vec{n}}U=\widetilde{a}(x)\quad\mathrm{on}\,\,\partial\Omega,
\end{align}
where $\widetilde{a}$ is a smooth function on $\partial\Omega$.
\begin{proposition}\label{corapp}
The model (\ref{eq1}) with the following boundary conditions has a unique classical solution.
\begin{itemize}
\item[(i)]\,\,The Robin boundary condition~(\ref{bd1}).
\item[(ii)]\,\, The Dirichlet boundary condition~(\ref{dbd}).
\item[(iii)]\,\,The Neumann boundary condition~(\ref{nbd}).
\end{itemize}
\end{proposition}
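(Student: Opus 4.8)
The plan is to prove uniqueness for all three boundary conditions by a single unified argument exploiting the strict monotonicity of $\sinh$ together with the positivity of the nonlocal coefficient, in the spirit of the convexity argument for the Robin case already encoded in Proposition~\ref{convexthm}. For case~(i), uniqueness is in fact immediate from Proposition~\ref{convexthm}: the energy $E_\eps$ is strictly convex on $H^1(\Omega)$, so it has at most one critical point, hence at most one weak solution, and elliptic regularity promotes this to a unique classical solution. So the real work is in~(ii) and~(iii), where there is no ambient variational functional whose minimizer is the solution (the Dirichlet and Neumann data enter differently), and a direct comparison argument is needed.

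For cases~(ii) and~(iii), here is the approach I would take. Suppose $U_1$ and $U_2$ are two classical solutions of \eqref{eq1} with the same boundary data. Write $c_j := \bigl(\fint_\Omega \cosh U_j\,\dxx\bigr)^{-1} > 0$, $j=1,2$. Subtracting the two equations gives
\begin{align}\notag
\eps^2 \Delta (U_1 - U_2) = c_1 \sinh U_1 - c_2 \sinh U_2 \quad\text{in } \Omega.
\end{align}
The idea is to multiply by $U_1 - U_2$, integrate over $\Omega$, and use the boundary condition to kill the boundary term: for Dirichlet, $U_1 - U_2 = 0$ on $\partial\Omega$; for Neumann, $\partial_{\vec n}(U_1 - U_2) = 0$ on $\partial\Omega$; either way $\int_{\partial\Omega} (U_1-U_2)\,\partial_{\vec n}(U_1-U_2)\,\dsx = 0$. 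This yields
\begin{align}\notag
-\eps^2 \int_\Omega |\nabla(U_1 - U_2)|^2 \dxx = \int_\Omega \bigl(c_1 \sinh U_1 - c_2 \sinh U_2\bigr)(U_1 - U_2)\,\dxx.
\end{align}
The plan is then to show the right-hand side is nonnegative, forcing $\nabla(U_1-U_2)\equiv 0$ and hence $U_1 - U_2$ constant (and $=0$ in the Dirichlet case). To handle the coupling through $c_1 \neq c_2$, I would split $c_1 \sinh U_1 - c_2 \sinh U_2 = c_1(\sinh U_1 - \sinh U_2) + (c_1 - c_2)\sinh U_2$. The first piece, paired with $U_1 - U_2$, is pointwise $\ge 0$ since $c_1>0$ and $\sinh$ is increasing. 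The genuinely delicate term is $(c_1 - c_2)\int_\Omega \sinh U_2 \,(U_1 - U_2)\,\dxx$, whose sign is not obvious; this will be the main obstacle.

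To control the troublesome term I would argue as follows. WLOG assume $c_1 \le c_2$, i.e. $\fint_\Omega \cosh U_1 \ge \fint_\Omega \cosh U_2$. In the Neumann case one first notes $U_1 - U_2 \equiv \kappa$ for some constant $\kappa$ would already follow if the right side vanished; more carefully, one shows that if $U_1 \not\equiv U_2$ then the strict inequality in the monotonicity estimate, combined with a careful accounting of the $(c_1-c_2)$ term via the relation $\fint_\Omega(\cosh U_1 - \cosh U_2)\dxx \ge 0$ rewritten using $\cosh U_1 - \cosh U_2 = 2\sinh\frac{U_1+U_2}{2}\sinh\frac{U_1-U_2}{2}$, produces a contradiction. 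Concretely, I expect the cleanest route is to show the map $U \mapsto c(U)\sinh U$ is monotone in the appropriate integrated sense: for any $U_1, U_2$,
\begin{align}\notag
\int_\Omega \bigl(c(U_1)\sinh U_1 - c(U_2)\sinh U_2\bigr)(U_1 - U_2)\,\dxx \ge 0,
\end{align}
with equality iff $U_1 - U_2$ is constant, which is exactly the statement that the nonlinear operator on the right-hand side of \eqref{eq1} is monotone — and this is essentially the derivative form of the strict convexity of $\widehat E_\eps$ established in Proposition~\ref{convexthm}. Once this integrated monotonicity is in hand, the Dirichlet case forces $U_1 \equiv U_2$ outright, and the Neumann case forces $U_1 - U_2 \equiv \kappa$ constant; plugging back into \eqref{eq1} and comparing (the equation is \emph{not} shift-invariant, as the paper stresses after \eqref{eqccpb}) pins down $\kappa = 0$. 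Finally, existence and regularity for~(ii) follow by minimizing the analogous functional with the Dirichlet constraint built into the admissible class, and for~(iii) by minimizing $E_\eps$ with the boundary term replaced by the linear functional $-\eps^2\int_{\partial\Omega}\widetilde a\, U\,\dsx$; strict convexity of the $\widehat E_\eps$ part gives uniqueness of the minimizer, and elliptic regularity gives smoothness. The main obstacle, as flagged, is making the $(c_1 - c_2)$ cross-term argument rigorous without circular reasoning; I would resolve it by phrasing everything through the convexity of $\widehat E_\eps$ rather than attempting a bare pointwise estimate.
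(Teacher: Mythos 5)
Your overall skeleton---multiply the difference equation by $U_1-U_2$, integrate by parts, observe that the boundary contribution vanishes (or has favorable sign in the Robin case), and reduce everything to an integrated monotonicity inequality for the nonlocal nonlinearity---matches the paper's. For~(i), your shortcut via Proposition~\ref{convexthm} (a strictly convex functional has at most one critical point) is legitimate, though the paper instead runs the same direct monotonicity calculation for all three boundary conditions.

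The genuine gap is that you never actually establish the monotonicity inequality; you flag the $(c_1-c_2)$ cross-term as ``the main obstacle'' and then gesture at resolving it ``through the convexity of $\widehat E_\eps$.'' The paper's resolution is a specific algebraic cancellation that is absent from your plan. Write $c_j\sinh U_j=\tfrac12\bigl(e^{U_j+\log c_j}-e^{-U_j+\log c_j}\bigr)$ and note the identity $U_1-U_2=\bigl(U_1+\log c_1\bigr)-\bigl(U_2+\log c_2\bigr)+\log\tfrac{c_2}{c_1}$ (and its analogue with $U_j$ replaced by $-U_j$). The right-hand side then splits into a group of terms controlled pointwise by $(e^A-e^B)(A-B)\ge 0$, plus a cross-term carrying the factor $\log\tfrac{c_2}{c_1}$; the latter integrates to \emph{exactly} zero because the normalization $c_j\int_\Omega\cosh U_j\,\dxx=|\Omega|$ holds for each $j$. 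It is not a sign estimate on a troublesome term---the term vanishes identically. Your proposed decomposition ($c_1(\sinh U_1-\sinh U_2)+(c_1-c_2)\sinh U_2$, a WLOG $c_1\le c_2$, and the product-of-hyperbolic-sines identity for $\cosh U_1-\cosh U_2$) has no visible route to this cancellation, and invoking convexity of $\widehat E_\eps$ only restates what you need: you would still have to compute the Gateaux derivative, verify it reproduces the nonlocal nonlinearity, and characterize the equality case. On that last point your claim is also too weak: you assert equality ``iff $U_1-U_2$ is constant,'' but the paper's argument gives, in the equality case, $U_1+\log c_1=U_2+\log c_2$ and $-U_1+\log c_1=-U_2+\log c_2$ pointwise, which force $U_1=U_2$ outright. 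Hence the extra step you propose for the Neumann case---ruling out a nonzero constant $\kappa$ via non-shift-invariance---is unnecessary in the paper's proof, although the argument you sketch for it is plausible on its own.
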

\begin{proof}
The main argument is based on the proof of Theorem~1.1 in \cite{l2014}.
For convenience, we let 
\begin{align*}
\pmb{\mathtt{C}}_{\eps}^i=\left(\fint_{\Omega}\cosh{U^i}\,\dxx\right)^{-1},\quad{i}=1,2.
\end{align*}

\textbf{Proof of (i).} 
Suppose by contradiction that $U^1$ and $U^2$ are two distinct classical solutions of (\ref{eq1})--(\ref{bd1}).
Subtracting (\ref{eq1}) for $U=U^2$ from
that for $U=U^1$, multiplying the result by $U^1-U^2$ and then integrating the expression over $\Omega$,
 one may check that
\begin{align}\label{11-10-928}
-\eps^2&\int_{\Omega}|\nabla(U^1-U^2)|^2\dxx-\frac{\eps}{\gamma}\int_{\partial\Omega}(U^1-U^2)^2\dsx\notag\\
=&\int_{\Omega}\left(\pmb{\mathtt{C}}_{\eps}^1{\sinh{U^1}}-\pmb{\mathtt{C}}_{\eps}^2{\sinh{U^2}}\right)(U^1-U^2)\dxx\notag\\
=&\frac{1}{2}\int_{\Omega}\left(e^{U^1+\log\pmb{\mathtt{C}}_{\eps}^1}-e^{U^2+\log\pmb{\mathtt{C}}_{\eps}^2}\right)\left[(U^1+\log\pmb{\mathtt{C}}_{\eps}^1)-(U^2+\log\pmb{\mathtt{C}}_{\eps}^2)+\log\frac{\pmb{\mathtt{C}}_{\eps}^2}{\pmb{\mathtt{C}}_{\eps}^1}\right]\dxx\notag\\
&+\frac{1}{2}\int_{\Omega}\left(e^{-U^1+\log\pmb{\mathtt{C}}_{\eps}^1}-e^{-U^2+\log\pmb{\mathtt{C}}_{\eps}^2}\right)\left[(-U^1+\log\pmb{\mathtt{C}}_{\eps}^1)-(-U^2+\log\pmb{\mathtt{C}}_{\eps}^2)+\log\frac{\pmb{\mathtt{C}}_{\eps}^2}{\pmb{\mathtt{C}}_{\eps}^1}\right]\dxx\\
\geq&\frac{1}{2}\log\frac{\pmb{\mathtt{C}}_{\eps}^2}{\pmb{\mathtt{C}}_{\eps}^1}\int_{\Omega}\left[\left(e^{U^1+\log\pmb{\mathtt{C}}_{\eps}^1}-e^{U^2+\log\pmb{\mathtt{C}}_{\eps}^2}\right)+\left(e^{-U^1+\log\pmb{\mathtt{C}}_{\eps}^1}-e^{-U^2+\log\pmb{\mathtt{C}}_{\eps}^2}\right)\right]\dxx\notag\\
=&\log\frac{\pmb{\mathtt{C}}_{\eps}^2}{\pmb{\mathtt{C}}_{\eps}^1}\left(\pmb{\mathtt{C}}_{\eps}^1\int_{\Omega}\cosh{U^1}\dxx-\pmb{\mathtt{C}}_{\eps}^2\int_{\Omega}\cosh{U^2}\dxx\right)=0.\notag
\end{align}
Here we have applied the integration by parts and the boundary constraint~$(U^1-U^2)+\gamma\eps\partial_{\vec{n}}(U^1-U^2)=0$ to the left-hand side of (\ref{11-10-928}),
and the elementary inequality $(e^A-e^B)(A-B)\geq0$ (for $A,B\in\mathbb{R}$) to the fourth line of (\ref{11-10-928}).
Note that $\gamma>0$. Thus, (\ref{11-10-928}) gives $\int_{\Omega}|\nabla(U^1-U^2)|^2\dxx=\int_{\partial\Omega}(U^1-U^2)^2\dsx=0$, which immediately implies
\begin{align}\notag
\nabla(U^1-U^2)=0\,\,\mathrm{in}\,\,\Omega,\,\,\mathrm{and}\,\,U^1-U^2=0\,\,\mathrm{on}\,\,\partial\Omega.
\end{align}
Therefore, we get $U^1=U^2$ in $\overline{\Omega}$ (also leads to a contradiction) and complete the proof of~(i).

\textbf{Proof of (ii).}
Suppose that $U^1$ and $U^2$ are two distinct classical solutions of (\ref{eq1}) with the Dirichlet boundary condition~(\ref{dbd}). Following the same argument of (\ref{11-10-928}), we arrive at  
\begin{align}\label{11-10-928-1}
&-\eps^2\int_{\Omega}|\nabla(U^1-U^2)|^2\dxx\notag\\
\geq&\frac{1}{2}\int_{\Omega}\left(e^{U^1+\log\pmb{\mathtt{C}}_{\eps}^1}-e^{U^2+\log\pmb{\mathtt{C}}_{\eps}^2}\right)\left[(U^1+\log\pmb{\mathtt{C}}_{\eps}^1)-(U^2+\log\pmb{\mathtt{C}}_{\eps}^2)\right]\dxx\\
&+\frac{1}{2}\int_{\Omega}\left(e^{-U^1+\log\pmb{\mathtt{C}}_{\eps}^1}-e^{-U^2+\log\pmb{\mathtt{C}}_{\eps}^2}\right)\left[(-U^1+\log\pmb{\mathtt{C}}_{\eps}^1)-(-U^2+\log\pmb{\mathtt{C}}_{\eps}^2)\right]\dxx\geq0.\notag
\end{align}
Hence, in $\Omega$ we must have $\nabla{U}^1=\nabla{U}^2$ and
$$\left(e^{U^1+\log\pmb{\mathtt{C}}_{\eps}^1}-e^{U^2+\log\pmb{\mathtt{C}}_{\eps}^2}\right)\left[(U^1+\log\pmb{\mathtt{C}}_{\eps}^1)-(U^2+\log\pmb{\mathtt{C}}_{\eps}^2)\right]=0,$$
 $$\left(e^{-U^1+\log\pmb{\mathtt{C}}_{\eps}^1}-e^{-U^2+\log\pmb{\mathtt{C}}_{\eps}^2}\right)\left[(-U^1+\log\pmb{\mathtt{C}}_{\eps}^1)-(-U^2+\log\pmb{\mathtt{C}}_{\eps}^2)\right]=0.$$
This implies $U^1+\log\pmb{\mathtt{C}}_{\eps}^1=U^2+\log\pmb{\mathtt{C}}_{\eps}^2$ and $-U^1+\log\pmb{\mathtt{C}}_{\eps}^1=-U^2+\log\pmb{\mathtt{C}}_{\eps}^2$, i.e., $U^1=U^2$ in $\Omega$.
Along with $U^1=U^2$ on $\partial\Omega$, we get $U^1=U^2$ in $\overline{\Omega}$ and complete the proof of~(ii).

It remains to prove (iii).
When both $U^1$ and $U^2$ are solutions of (\ref{eq1}) with the boundary condition~(\ref{nbd}),
it is easy to check that
(\ref{11-10-928-1}) still holds. Hence, 
 we immediately get $U^1=U^2$ in $\overline{\Omega}$ and complete the proof of (iii). 
Therefore, the proof of Proposition~\ref{corapp} is done.
\end{proof}

\end{document}